\newtheorem{theorem}{Theorem}[section]
\newtheorem{lemma}{Lemma}[section]
\newtheorem{remark}{Remark}[section]
\numberwithin{equation}{section}
\newcommand{\R}{\mathbb{R}}
\newcommand{\mc}{\mathcal}
\newcommand{\wt}{\widetilde}
\newcommand{\na}{\nabla}
\newcommand{\va}{\varepsilon}
\newcommand{\al}{\alpha}
\newcommand{\pa}{\partial}
\newcommand{\Om}{\Omega}
\newcommand{\om}{\omega}
\newcommand{\ga}{\gamma}
\newcommand{\be}{\beta}
\date{} 
\begin{document}

\title{Convergence rates in  homogenization of higher order parabolic systems}

\author{Weisheng Niu \thanks{Supported partly by the NSF of China (11701002,11301003)
and NSF of Anhui Province (1708085MA02). }~~~ Yao Xu}
\maketitle
\pagestyle{plain}
\begin{abstract}
This paper is concerned with the optimal convergence rate in  homogenization of higher order parabolic systems with bounded measurable, rapidly oscillating periodic coefficients. The sharp $O(\va)$  convergence rate  in the space $L^2(0,T; H^{m-1}(\Om))$ is obtained for both the initial-Dirichlet problem and the initial-Neumann problem. The duality argument inspired by  \cite{suslinaD2013} is used here.
 \end{abstract}



\section{Introduction}
We consider the sharp convergence rate in periodic homogenization of initial value problems
\begin{equation} \label{eq1}
\begin{cases}
 \pa_tu_\va+\mathcal{L}_\varepsilon u_\varepsilon =f  &\text{ in } \Omega\times (0,T),   \vspace{0.1cm} \\
   u_\va=h   &\text{ on }  \Omega \times \{t=0\},
\end{cases}
\end{equation}
with homogeneous Dirichlet and Neumann boundary data. The linear operator $\mc{L}_\va$ is defined as
 \begin{align} \label{cod0}
  \mc{L}_\varepsilon  =  (-1)^{m}\sum_{|\alpha|=|\beta|=m}
D^\alpha \big(A^{\alpha \beta}(x/\va, t/\va^{2m})D^\beta  \big),
\end{align}
where $\alpha, \beta $
are $d$-dimensional multi-indices with components $\alpha_k, \beta_k, k=1,2,...,d$, and
$$ |\alpha|=\sum_{k=1}^d \alpha_k, ~~D^\alpha=D_{x_1}^{\alpha_1} D_{x_2}^{\alpha_2}\cdot\cdot\cdot D_{x_d}^{\alpha_d}.
$$
The coefficients matrix $A(y,s)=(A_{ij}^{\alpha \beta}(y,s)), 1 \leq i, j \leq n,$  is real, bounded measurable with
\begin{align}\label{cod1}
\sum_{|\al|=|\be|=m} A^{\al\be}(y,s)\xi_\al\xi_\be\geq \mu|\xi|^2, ~~|A(y, s)|\leq\frac{1}{\mu} \quad\text{for }\, a.e.\,  (y,s)\in \R^{d+1},
\end{align}
where  $\mu>0$, $\xi=(\xi_\al)_{|\al|=m}$, $\xi_\al=(\xi^1_\al,...,\xi_\al^n)\in \R^n$.
We also assume that $A$ is $1$-periodic in $(y,s),$ that is,
\begin{align}\label{cod2}
A(y+z,  s+t)=A(y,s)\quad \text{for any } (z,t)\in \mathbb{Z}^{d+1} \text{ and }\, a.e.\, (y,s)\in \mathbb{R}^{d+1}.
\end{align}


Let $H_0^m(\Omega)$ be the conventional $\R^n$-valued Sobolev spaces with dual $H^{-m}(\Om)$.  For $ 0<T<\infty,$ it is known that
under the uniform parabolic condition
(\ref{cod1}),
for any  $ f\in L^2(0,T; H^{-m}(\Omega))$ and any $ h\in L^2(\Om)$,
 initial value problem (\ref{eq1}) with homogeneous Dirichlet data admits a unique weak solution $u_\va$ in the sense that $u_\varepsilon\in L^2(0,T; H_0^m(\Omega))\cap L^\infty(0,T; L^2(\Omega))$,
\begin{align*}
&-\int_{\Om_T} u_\va\pa_t\phi dxdt+
  \sum_{|\alpha|=|\beta|=m} \int_{\Omega_T} A_{ij}^{\alpha \beta}(x/\varepsilon,t/\va^{2m})D^\beta u_{\va j} D^\alpha \phi_i \, dxdt\\
  &  =\int_0^T \langle f(t), \phi(t)\rangle_{H^{-m}(\Om)\times H_0^m(\Om)} dt+\int_\Om h \phi(0) dx
\end{align*}
for any $\phi\in C_c^\infty (\Om\times [0, T))$.

 As we shall prove in Section 2, under periodicity condition (\ref{cod2})
the homogenized problem of (\ref{eq1}) with homogeneous Dirichlet boundary data is given by   \begin{equation}  \label{hoeq1}
 \begin{cases}
 \pa_tu_0+\mathcal{L}_0u_0 =f  &\text{ in } \Omega\times (0,T),   \\
 Tr (D^\gamma u_0)=0 & \text{ on } \partial\Omega \times(0,T),\, 0\leq|\gamma|\leq m-1,  \\
 u_0=h & \text{ on }  \Omega \times \{t=0\},
\end{cases}
\end{equation}
where
\begin{align}\label{hoop}
 \begin{split}
 &\mathcal{L}_0 = (-1)^m \sum_{|\alpha|=|\beta|=m} D^\alpha (\bar{A}^{\alpha \beta}D^\beta ),\\
&\bar{A}_{ij}^{\alpha \beta}=\sum_{|\gamma|=m} \int_Y \Big[A_{ij}^{\alpha \beta}(y,s)
 + A_{i\ell}^{\alpha \gamma}(y,s)D^\gamma \chi_{\ell j}^\beta (y,s)\Big]dyds,
 \end{split}
\end{align}
 with $Y=[-1/2, 1/2)^{d+1}$ and
$\chi= (\chi^\gamma_{ij})$ being the matrix of correctors for the operator $\partial_t+\mathcal{L}_\varepsilon$. 
Moreover, as $\va$ tends to zero $u_\va$ converges strongly to $u_0$ in $L^2(0,T; H^{m-1}_0(\Om))$.

Our first objective is to derive the optimal convergence rate of $u_\va$ to $u_0$.
\begin{theorem}\label{tcd}
Let $\Om$ be a bounded $C^{m,1}$ domain in $\R^d$ and  $0<T<\infty$.  Assume that $A$ satisfies (\ref{cod1}) and (\ref{cod2}),  $f\!\in\! L^2(0,T; H^{-m+1}(\Om)), h\!\in\! L^2(\Om).$ Let $u_\varepsilon, u_0\in L^2(0,T; H_0^m(\Om))$ be the weak solutions to initial-Dirichlet problems (\ref{eq1}) and (\ref{hoeq1}), respectively. If  in addition  $u_0\in L^2(0,T; H^{m+1}(\Om))$, then
\begin{align}\label{tcdre1}
\|u_\va-u_0\|_{L^2(0,T; H_0^{m-1}(\Om))} \leq C \va \Big\{ \|u_0\|_{L^2(0,T; H^{m+1}(\Om))} + \|f\|_{L^2(0,T; H^{-m+1}(\Om))}+\|h\|_{L^2(\Om)} \Big\},
 \end{align}
 where $C$ depends only on $d,n,m,\mu, T $ and $\Om.$
\end{theorem}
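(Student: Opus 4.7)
The plan is to follow the duality strategy of Suslina, adapted to the higher-order parabolic setting. For $g\in L^2(0,T;H^{-m+1}(\Om))$ of unit norm, let $v_\va, v_0$ denote the weak solutions of the backward initial-Dirichlet problems $-\pa_t v_\va+\mc L_\va^{*}v_\va=g$ and $-\pa_t v_0+\mc L_0^{*}v_0=g$, both with $v|_{t=T}=0$ and homogeneous Dirichlet data of orders $0,\ldots,m-1$. Since $\pa\Om\in C^{m,1}$, $2m$-th order $L^2$ elliptic regularity applied to the constant-coefficient operator $\mc L_0^{*}$ at each time, combined with the parabolic energy estimate for the adjoint problem, should give
$$\|v_0\|_{L^2(0,T;H^{m+1}(\Om))}+\|\pa_t v_0\|_{L^2(0,T;H^{-m+1}(\Om))}\leq C\|g\|_{L^2(0,T;H^{-m+1}(\Om))}.$$
By the duality between $H_0^{m-1}(\Om)$ and $H^{-m+1}(\Om)$, to prove \eqref{tcdre1} it suffices to bound $|\int_{\Om_T}(u_\va-u_0)g\,dxdt|$ by $C\va\{\|u_0\|_{L^2H^{m+1}}+\|f\|_{L^2H^{-m+1}}+\|h\|_{L^2}\}$ uniformly in $g$.

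Testing the equation for $v_\va$ against $u_\va-u_0$, integrating by parts in $t$ using $(u_\va-u_0)|_{t=0}=0$ and $v_\va|_{t=T}=0$, and in $x$ using the homogeneous Dirichlet data up to order $m-1$, reduces the pairing to the contribution of $(\pa_t+\mc L_\va)(u_\va-u_0)=(\mc L_0-\mc L_\va)u_0$, yielding the key identity
$$\int_{\Om_T}(u_\va-u_0)\,g\,dxdt=\int_{\Om_T}\sum_{|\al|=|\be|=m}\big[\bar A^{\al\be}-A^{\al\be}(x/\va,t/\va^{2m})\big]\,D^\be u_0\cdot D^\al v_\va\,dxdt.$$
The problem is thus reduced to extracting a factor $\va$ from the oscillation of $A(x/\va,t/\va^{2m})-\bar A$ against the adjoint test function $v_\va$ on the right-hand side.

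To realize this gain I would substitute the two-scale approximation $v_\va\approx v_0+\va^{m}\chi^{*\ga}(x/\va,t/\va^{2m})K_\va D^\ga v_0$, where $\chi^{*}$ denotes the correctors for the adjoint operator and $K_\va$ is a spatial Steklov-type smoothing operator cut off away from an $O(\va)$ boundary strip. The cell equation for $\chi$ together with the associated flux corrector $\mf B$ (constructed in Section~2) provides an identity of the schematic form $\bar A^{\al\be}-A^{\al\be}-\sum_\ga A^{\al\ga}D^\ga\chi^{\be}=\pa_{y_k}\mf B^{\al\be}_k+\pa_s\mf b^{\al\be}$. After rescaling, the principal part of the integrand becomes $\va\,\pa_{x_k}[\mf B^{\al\be}_k(x/\va,t/\va^{2m})]$ plus $\va^{2m}\pa_t[\mf b^{\al\be}(x/\va,t/\va^{2m})]$ acting on $D^\be u_0\cdot D^\al v_0$. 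One further integration by parts, moving the extra derivative onto $v_0$ (using $v_0\in L^2 H^{m+1}$ and $\pa_t v_0\in L^2 H^{-m+1}$) or onto $u_0$, then delivers the prefactor $\va$. The data $h$ and source $f$ enter only through the parabolic energy bound for $u_\va$ used to control lower-order remainder terms.

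The chief difficulties will be twofold. First, constructing the flux corrector $\mf B$ with the divergence identity above for a higher-order parabolic operator is delicate because the time rescaling $t/\va^{2m}$ couples the spatial divergence structure with $\pa_t$; this is established in Section~2 and will be used as a black box here. Second, the minimal regularity $u_0\in L^2 H^{m+1}$ provides only one derivative above the order of the equation, so essentially every integration by parts after the first must shift derivatives onto $v_0$, where the adjoint regularity estimate is available. Correspondingly, the boundary-layer error in the $O(\va)$ strip where $K_\va$ is inactive has to be controlled via a Hardy-type inequality for $H_0^m$, contributing an extra factor $\va^{1/2}$ that combines with a matching $\va^{1/2}$ gain from the interior corrector bound to close the argument at the sharp rate $\va$.
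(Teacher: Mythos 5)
Your overall strategy (duality with the backward adjoint problems, correctors and flux correctors from Section 2, adjoint regularity $\|v_0\|_{L^2(0,T;H^{m+1})}\le C\|g\|_{L^2(0,T;H^{-m+1})}$ in $C^{m,1}$ domains) is the right one, and your reduction to the identity $\int_{\Om_T}(u_\va-u_0)g=\sum\int_{\Om_T}[\bar A^{\al\be}-A_\va^{\al\be}]D^\be u_0\cdot D^\al v_\va$ is correct. The gap comes right after: you correct only the adjoint solution, replacing $v_\va$ by $v_0+\va^m\chi^{*}K_\va D v_0$, while keeping the raw factor $[\bar A-A_\va]D^\be u_0$ on the other side. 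The error of that substitution is $\sum\int[\bar A^{\al\be}-A_\va^{\al\be}]D^\be u_0\cdot D^\al\Phi_\va$, where $\Phi_\va$ is the corrected adjoint discrepancy; the only estimate available for it (the adjoint version of Theorem \ref{t3.1}, i.e.\ \eqref{co31re2}) is $\|\nabla^m\Phi_\va\|_{L^2(\Om_T)}\le C\va^{1/2}\|g\|$, and since $[\bar A-A_\va]D^\be u_0$ carries no smallness in $L^2(\Om_T)$, this term stalls at $O(\va^{1/2})$. Trying to upgrade it by viewing $\sum\int A_\va^{\al\be}D^\be u_0 D^\al\Phi_\va$ as the adjoint form tested with $u_0$ and integrating by parts in $t$ only trades it for $\langle f,\Phi_\va\rangle$-type pairings, which need $\|\Phi_\va\|_{L^2(0,T;H^{m-1})}=O(\va)$ --- exactly the adjoint of the statement you are proving, so the argument becomes circular. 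A related problem appears in the boundary layer: smoothing $D^\be u_0$ there produces $\va^{1/2}\|\nabla^m v_\va\|_{L^2(\Om_{T,c\va})}$, and no $\va^{1/2}$ smallness is available for the oscillating $v_\va$ itself near $\pa\Om$.

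The paper closes this by symmetrizing: it builds the corrected function $w_\va$ on the $u$-side, including the extra flux-corrector terms $(D^\zeta\mf B^{\ga(d+1)\be})_\va$ and $\mathcal B_\va^{\al\be}$ in \eqref{def_w} that are specific to the higher-order parabolic scaling, and proves in Lemma \ref{le3.2} that the residual of $w_\va$ pairs against any $\phi\in L^2(0,T;H^m_0)$ with the bound $C\{\va\|\nabla^m\phi\|_{L^2(\Om_T)}+\va^{1/2}\|\nabla^m\phi\|_{L^2(\Om_{T,8\va})}\}$. The duality step \eqref{ptcd15} then pairs $F$ with $w_\va$ (not with $u_\va-u_0$) and splits $v_\va=\Phi_\va+v_0+(\text{corrector terms})$: the $\Phi_\va$ piece is $O(\va)$ because the $\va^{1/2}$ from Lemma \ref{le3.2} multiplies the $\va^{1/2}$ from \eqref{co31re2}; the $v_0$ piece is $O(\va)$ because $\|\nabla^m v_0\|$ in the $O(\va)$ boundary layer is itself $O(\va^{1/2})$; and the corrector piece is supported away from the layer, so only the clean $\va\|\nabla^m\phi\|$ term acts on it. Finally one must also check that the corrector terms in \eqref{def_w} are themselves $O(\va)$ in $L^2(0,T;H^{m-1})$ (estimates \eqref{ptcd11}--\eqref{ptcd13}) to pass from $w_\va$ back to $u_\va-u_0$; your sketch omits this step as well, though it is routine. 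In short, without introducing the corrected $u$-side function and its residual estimate, the substitution error in your plan cannot be pushed below $O(\va^{1/2})$, so the proof as proposed does not reach the sharp rate \eqref{tcdre1}.
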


The second objective of this paper is to obtain the sharp convergence rate in the homogenization of (\ref{eq1}) with homogeneous Neumann boundary data. Let $H^m(\Omega) $ be the conventional $\R^n$-valued Sobolev spaces with dual $\wt{H}^{-m}(\Omega)$ and let $0<T<\infty$. Write out the complete variational form of (\ref{eq1}) with boundary term formally. Assume that it possesses the following form
\begin{align}\label{neu1}
& - \int_{\Om_T} u_\va\pa_t\phi dxdt+
  \sum_{|\alpha|=|\beta|=m} \int_{\Omega_T} A_{ij}^{\alpha \beta}(x/\varepsilon, t/\va^{2m})D^\beta u_{\va j}(x,t) D^\alpha \phi_i(x,t)\, dxdt \nonumber\\
  &+\sum_{j=1}^{ m-1} \int_0^T \langle N_{m-1-j}u_\va(t), \pa_\nu^j \phi (t) \rangle_{\partial\Om} dt=\int_0^T \langle f(t), \phi(t)\rangle_{\wt{H}^{-m}(\Om)\times H^m(\Om)} dt+\int_\Om h \phi(0) dx
\end{align} for any  $\phi\in C^\infty (\Om\times [0,T])$ with $\phi(T)=0$,
where $\pa_\nu$ is the derivative along the unit outward normal $\nu$ of $\Omega$.  By homogeneous Neumann boundary data, we mean that, formally,
\begin{align}\label{neu2}
N_{m-1-j}u_\va=0\quad \text{ on } \pa\Om\times (0,T), \quad j=0,1,..,m-1.
\end{align}
For $ f\in L^2(0,T; \wt{H}^{-m}(\Omega)), h\in L^2(\Om)$, problem (\ref{eq1}) with homogeneous Neumann boundary data admits a unique weak solution $u_\varepsilon\in L^2(0,T; H^m(\Omega))\cap L^\infty(0,T; L^2(\Omega))$ \cite{dongtran2016}.

Similar to the initial-Dirichlet problem, under the periodicity condition (\ref{cod2})  $u_\va$ converges strongly in $L^2(0,T; H^{m-1}(\Om))$ to the solution $u_0$ of the following initial-Neumann problem
 \begin{equation}  \label{hoeq2}
 \begin{cases}
 \pa_tu_0+\mathcal{L}_0u_0 =f  &\text{ in } \Omega\times (0,T),  \\
 N_{m-1-j} (u_0)=0 & \text{ on } \partial\Omega \times(0,T),    \\
 u_0=h & \text{ on } \Omega \times \{t=0\},
\end{cases}
\end{equation}
  where $\mathcal{L}_0$ is defined as in (\ref{hoop}) (see Theorem \ref{th2.2}).
Parallel to Theorem \ref{tcd}, we have the following sharp convergence rate in homogenization of the initial-Neumann problem.
\begin{theorem}\label{tcn}
Let $\Om$ be a bounded $C^{m,1}$ domain in $\R^d  $ and $0<T<\infty$.  Assume that $A$ satisfies  (\ref{cod1}) and (\ref{cod2}), and $f\!\in\! L^2(0,T; \wt{H}^{-m+1}(\Om)), h\!\in\! L^2(\Om).$ Let $u_\varepsilon, u_0 \in L^2(0,T; H^m(\Om))$ be, respectively, the weak solutions to the initial-Neumann problems (\ref{eq1})  and (\ref{hoeq2}). If in addition $u_0\in L^2(0,T; H^{m+1}(\Om))$, then
\begin{align}\label{tcnre1}
\|u_\va-u_0\|_{L^2(0,T; H^{m-1}(\Om))} \leq C \va \left\{ \|u_0\|_{L^2(0,T; H^{m+1}(\Om))} + \|f\|_{L^2(0,T; \wt{H}^{-m+1}(\Om))}+\|h\|_{L^2(\Om)} \right\},
 \end{align}
 where $C$ depends only on $d,n,m,\mu, T $ and $\Om.$
\end{theorem}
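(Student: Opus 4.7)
The plan is to adapt to the Neumann setting the Suslina-type duality argument that yields Theorem \ref{tcd}. By duality,
\[
\|u_\va - u_0\|_{L^2(0,T;H^{m-1}(\Om))} = \sup \int_0^T\!\!\int_\Om (u_\va-u_0)\cdot g\,dx\,dt,
\]
the supremum being taken over $g$ of unit norm in the dual of $L^2(0,T;H^{m-1}(\Om))$, so it suffices to control that pairing by $C\va$ times the data norms. For each admissible $g$ I would solve the backward adjoint initial-Neumann problem
\[
-\pa_t v + \mc{L}_0^\ast v = g \text{ in } \Om_T,\qquad N^\ast_{m-1-j} v = 0 \text{ on }\pa\Om\times(0,T),\qquad v(T)=0,
\]
and use the $C^{m,1}$-regularity of $\pa\Om$, together with the analogue of Theorem \ref{th2.2} for $\mc{L}_0^\ast$, to obtain $v\in L^2(0,T;H^{m+1}(\Om))$ with $\|v\|_{L^2(H^{m+1})}\le C\|g\|$. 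This extra spatial derivative of $v$ is the ingredient that upgrades the standard energy rate $O(\va^{1/2})$ to the sharp $O(\va)$.

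Testing the weak formulations of (\ref{eq1}) and (\ref{hoeq2}) against $v$ and subtracting produces the key identity
\[
\int_0^T\!\!\int_\Om (u_\va-u_0)\,g\,dx\,dt = \sum_{|\al|=|\be|=m}\int_0^T\!\!\int_\Om \big[\bar A^{\al\be} - A^{\al\be}(x/\va,\, t/\va^{2m})\big] D^\be u_\va\,D^\al v\,dx\,dt,
\]
with no boundary traces appearing, since the Neumann conditions are natural. To exploit it I would plug in the two-scale expression $u_\va \approx u_0 + \va^m\chi^\ga(x/\va,t/\va^{2m})\,S_\va(D^\ga u_0)$, with $\chi^\ga$ the correctors of Section 2 and $S_\va$ an $\va$-scale Steklov smoothing operator (necessary because $D^m u_0$ has only $L^2(H^1)$ regularity). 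The cell-problem identity, together with auxiliary flux correctors $B^{\al\be}$, then rewrites $\bar A - A - A\cdot D\chi$ as a divergence in $(x,t)$ at scale $\va$, and a single integration by parts produces the desired factor of $\va$ in the volume part.

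The main obstacle I anticipate lies in the boundary integrals thrown off by that integration by parts on $\pa\Om\times(0,T)$. In the Dirichlet case of Theorem \ref{tcd}, $v$ and its tangential derivatives vanish on $\pa\Om$, annihilating these terms; here we only know that the conormal expressions $N^\ast_{m-1-j} v$ vanish, while the individual traces $\pa_\nu^j v$ for $0\le j\le m-1$ merely lie in $L^2(\pa\Om\times(0,T))$ by the $H^{m+1}$-regularity of $v$. The plan is to pair these trace norms of $v$ against boundary estimates for the oscillating profiles $\chi^\ga(x/\va,t/\va^{2m})\,S_\va(D^\ga u_0)$ and $B^{\al\be}(x/\va,t/\va^{2m})\,S_\va(D^\ga u_0)$, and to absorb the $\va^{1/2}$ loss of a naive boundary-layer bound by exploiting the smoothing $S_\va$ together with the $C^{m,1}$ regularity of $\Om$, as in the Suslina--Pakhnin scheme. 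Summing the volume and boundary contributions, each bounded by $C\va\{\|u_0\|_{L^2(H^{m+1})}+\|f\|_{L^2(\wt H^{-m+1})}+\|h\|_{L^2}\}\|v\|_{L^2(H^{m+1})}$, and taking the supremum over $g$ yields (\ref{tcnre1}).
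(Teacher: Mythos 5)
Your starting identity $\int_0^T\!\int_\Om (u_\va-u_0)g = \sum_{|\al|=|\be|=m}\int_{\Om_T}\big[\bar A^{\al\be}-A^{\al\be}_\va\big]D^\be u_\va\, D^\al v$ is correct, and you rightly note that the Neumann conditions make it boundary-term free. The genuine gap comes right after: when you insert the two-scale approximation of $u_\va$, the flux-corrector/integration-by-parts device only handles the contribution of $u_0+\va^m\chi_\va S_\va(D^\ga u_0)$; the remainder term $\sum\int_{\Om_T}\big[\bar A^{\al\be}-A^{\al\be}_\va\big]D^\be\big(u_\va-u_0-\va^m\chi_\va S_\va(D^\ga u_0)\big)D^\al v$ is never addressed. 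The only available bound for the $m$-th gradient of that remainder is the energy estimate of Theorem \ref{th5.1}, which is $O(\va^{1/2})$, and the factor $(\bar A-A_\va)D^\al v$ is merely bounded, not small; so this term is stuck at $O(\va^{1/2})$ in your scheme. This is precisely where the paper does something you omit: it introduces the \emph{oscillating} adjoint problem ($-\pa_t v_\va+\mathcal{L}^*_\va v_\va=F$ with homogeneous Neumann data), its corrected error $\Phi_\va$ built from the adjoint correctors $\chi^*,\mathfrak{B}^*,\mathcal{B}^*$, writes the exact identity $\int_0^T\langle F,w_\va\rangle=\int_0^T\langle\pa_t w_\va,v_\va\rangle+\sum\int_{\Om_T}A_\va D^\al w_\va D^\be v_\va$, splits $v_\va=\Phi_\va+v_0+(v_\va-v_0-\Phi_\va)$, and applies Lemma \ref{le5.2} to each piece. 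The factor $\va$ then comes from $\|\nabla^m\Phi_\va\|_{L^2(\Om_T)}=O(\va^{1/2})$ (the adjoint version of Theorem \ref{th5.1}), from the boundary-layer smallness $\|\nabla^m v_0\|_{L^2(\Om_{T,8\va})}\le C\va^{1/2}\|v_0\|_{L^2(0,T;H^{m+1}(\Om))}$, and from the fact that the remaining adjoint-corrector piece is supported away from $\Om_{T,8\va}$. A duality argument that corrects only $u_\va$ and pairs with the uncorrected homogenized dual solution cannot deliver the rate; correcting the dual side (or an equivalent two-sided device) is the missing idea, not an optional refinement.

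A secondary problem is your treatment of the boundary. The paper never produces the boundary integrals you anticipate: the correctors enter through $K_\va(D^\ga u_0)=S^2_\va(D^\ga u_0)\rho_\va\varrho_\va$ with spatial and temporal cutoffs, so every integration by parts is interior, and the near-boundary price appears only as the term $\va^{1/2}\|\nabla^m\phi\|_{L^2(\Om_{T,8\va})}$ in Lemma \ref{le5.2}, which is subsequently beaten by the $\va^{1/2}$ boundary-layer or smallness estimates of the dual test functions. Your alternative — integrating by parts up to $\pa\Om\times(0,T)$ and pairing $L^2$ traces $\pa_\nu^j v$ against the oscillating profiles, hoping to ``absorb the $\va^{1/2}$ loss'' via $S_\va$ and the $C^{m,1}$ regularity — names the difficulty but gives no mechanism: a naive trace bound of $\chi_\va S_\va(D^\ga u_0)$ on the boundary has no $\va^{1/2}$ gain, so as written this step would again stall at $O(\va^{1/2})$. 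Either supply the adjoint-corrector half of the duality argument together with a concrete boundary-layer estimate, or adopt the cutoff construction of \eqref{def_w} as the paper does.
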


The proof of Theorem 1.1   is mainly based on the duality argument initiated in \cite{suslinaD2013}.
 To adapt the ideas, we first provide the existence results for the matrix of correctors $\chi(y,s)$ and flux correctors $ \mathfrak{B}(y,s)$ (also referred as dual correctors) for operators $\pa_t+\mathcal{L}_\varepsilon (\va>0)$ in Section 2.
 Recall that flux correctors  play an essential role in the investigation on sharp convergence rate in the homogenization of second order elliptic or parabolic systems \cite{jko1994, kls2012,klsa1,shenan2017, gsjfa2017}. In \cite{gsjfa2017}, the flux correctors are obtained by considering a harmonic system with periodic boundary conditions in $\R^{d+1}$ (see Lemma 2.1 therein), which is a modification of the approach for second order elliptic systems. The process  however seems not applicable to higher order parabolic systems. Indeed, following the process we will obtain a degenerate elliptic system in $\R^{d+1}$, which is hard to cope with. Instead, we will modify the construction of flux correctors for elliptic systems in another manner to construct the flux correctors for high order parabolic systems, see Lemmas 2.1 and 2.2. This approach also provides us further regularity results on the flux correctors (see \eqref{le21re2}$_{2}$).  As we can see from the definition of $w_\va$ in \eqref{ome}, the higher regularity ($H^{2m-1}$) on $\mathfrak{B}^{\gamma (d+1) \beta}$ (or $\mathfrak{B}^{\gamma (d+1) \beta}$) are essential, which however is trivial for second order parabolic systems ($m=1$) \cite{gsjfa2017}.

 Since we consider the systems with coarse coefficients, the correctors $\chi(y,s)$  and the flux correctors $ \mathfrak{B}(y,s)$ may be unbounded. Therefore, similar to \cite{gsjfa2017},  in Section 3 we  introduce the smoothing operator $S_\va$ with respect to the space and time variables $x, t$ and establish proper estimates for the smoothing operator.  However, to deal with the higher order operators,  more general estimates are proved by using an approach quite different from \cite{gsjfa2017}.

With preparations in Sections 2 and 3, in Section 4  we introduce the function
\begin{align}\label{ome}
w_\varepsilon(x,t) &=u_\varepsilon(x,t)-u_0(x,t)-\varepsilon^m\sum_{|\gamma|=m}\chi^\gamma (x/\va,t/\va^{2m}) S^2_\varepsilon(D^\gamma  u_0 )\rho_\varepsilon(x)\varrho_\va(t)
\nonumber\\
&\quad+(-1)^{m+1} \sum_{\substack{|\beta|=|\gamma|=m\\ \zeta+\eta =\gamma\\0\leq|\zeta|\leq m-1}}\varepsilon^{m+|\eta|}D^\zeta\mathfrak{B}^{\gamma (d+1) \beta}(x/\va,t/\va^{2m}) D^\eta[S^2_\varepsilon(D^\gamma  u_0 )\rho_\varepsilon(x)\varrho_\va(t)] \nonumber\\ &\quad+(-1)^m\varepsilon^{2m}\sum_{|\alpha|=|\beta|=m}\mathcal{B}^{\alpha\beta}(x/\va,t/\va^{2m})D^\alpha[S^2_\varepsilon(D^\gamma  u_0 )\rho_\varepsilon(x)\varrho_\va(t)],\end{align}
where $ \rho_\va , \varrho_\va$ are proper cut-off functions, $S^2_\va=S_\va\circ S_\va$, see \eqref{def_w} for the details. Then through some delicate analysis and proper use of preparations aforementioned, we prove the following $O\sqrt{\va}$ estimate in $L^2(0,T; H^{m-1}(\Om))$,   \begin{align}\label{pre}
\|\na^m w_\va\|_{L^2(\Om\times (0,T))}\leq C \va^{1/2} \Big\{\|u_0\|_{L^2(0,T; H^{m+1}(\Om))}+ \|f\|_{L^2(0,T; H^{-m+1}(\Om))} +\|h\|_{L^{2}(\Om)}  \Big\}.
\end{align} The above estimate should be comparable to (3.17) or (3.20) in \cite{gsjfa2017}. Yet, we point out that the auxiliary function $w_\va$ is much more complicated than that for second order parabolic systems constructed in \cite{gsjfa2017}. And compared to the proof of (3.17) in \cite{gsjfa2017}, the proof of (\ref{pre}) needs more delicate analysis.  Whence (\ref{pre}) is obtained, the desired estimate (\ref{tcdre1}) follows directly by the duality argument motivated by \cite{suslinaN2013,  gsjfa2017}.  The proof of Theorem 1.2 is completely parallel, and is sketched in Section 5.

As the end of the introduction, let us provide a brief review on the background of convergence rates in quantitative homogenization, which is currently a quite active area of research.
Sharp convergence rates for second order elliptic equations (systems)  has been studied extensively in various circumstances in the past years.
To name but a few, in \cite{grisoas2004, suslinaD2013,suslinaN2013} the optimal $O(\va)$ convergence rate was obtained for second order elliptic equations with Dirichlet or Neumann boundary conditions in $C^{1,1}$ domains. In \cite{kls2012, shenan2017}, the optimal $O(\va)$ and suboptimal convergence rates (like $O(\va \ln\frac{1}{\va})$) were derived for second order elliptic systems with Dirichlet or Neumann boundary conditions in Lipschitz domains. See also \cite{al87, klscpam2014,gsjde2015, shennote2017, shenzhu2017} and references therein for more related results.

For second order parabolic equations with time-independent coefficients, the sharp convergence rate has also been studied widely, see \cite{zhikovRuss2006, SuslinaFunct2010} for the Cauchy problems on the whole space, and \cite{meshkovaFunct2015, meshkovaAppl2016} for the initial boundary value problems in $C^{1,1}$ cylinders. Quantitative estimates for parabolic equations with time dependent coefficients are a bit more intricate and little progress was made until very recently \cite{geng2015uniform, gsjfa2017, Byundcds2017,xu2017quantitative, armstrong2017quantitative}.  In \cite{gsjfa2017} the optimal $O(\va)$ convergence rate in $L^2(0,T; L^2(\Om))$ was obtained in homogenization of second order parabolic systems in $C^{1,1}$ cylinders, while in \cite{xu2017quantitative} the suboptimal $O(\va\ln(1/\va))$ convergence rate in $L^2(0,T; L^2(\Om))$ was obtained for parabolic systems of elasticity in Lipschitz cylinders.  More recently, in \cite{armstrong2017quantitative} the convergence rate and uniform regularity estimates in homogenization of second order stochastic parabolic equations were deeply studied. See also \cite{geng2015uniform, Byundcds2017} for more results on the uniform regularity estimates in the periodic setting.

Homogenization of higher order elliptic equations arises in the study of linear elasticity \cite{lions1978,jko1994,pastukhova2017}, for which the qualitative results have been obtained  for many years \cite{lions1978,jko1994}. Few quantitative results were known in the homogenization of higher order elliptic or parabolic equations until very recently. In \cite{pastukhova2016, pastukhova2017, ks}, the optimal $O(\va)$ convergence rate was established in homogenization of higher-order elliptic equations in the whole space.
In \cite{Suslina2017-D, Suslina2017-N},
some $O(\va)$ two-parameter resolvent estimates were obtained
for more general higher order  elliptic systems with homogeneous Dirichlet or Neumann boundary data in bounded $C^{2m}$ domains.
 Shortly, the sharp convergence rate  and uniform regularity estimates in the homogenization of higher order elliptic systems with symmetric or nonsymmetric coefficients were further studied in \cite{nsx, nxboundary}, see also \cite{xuniusiam2017} for the results in the almost-periodic setting.

 As far as we know, quantitative estimates in homogenization of higher order parabolic equations have not been studied, especially for those with time dependent coefficients. The present paper seems to be the first attempt in this direction. Our results in Theorems \ref{tcd} and \ref{tcn} extend the convergence results for higher order elliptic equations in \cite{pastukhova2016, pastukhova2017,Suslina2017-D, Suslina2017-N} to parabolic systems on the one hand, and on the other hand  they extend the results for second order parabolic systems in \cite{gsjfa2017} to higher order parabolic systems.


\section{Qualitative homogenization}

\subsection{Correctors and flux correctors}
Set $Y=[-\frac{1}{2}, \frac{1}{2})^{d+1}$.
For $1\leq i, j\leq n$ and $d$-dimensional multi-index $\gamma$ of degree $m$, i.e. $|\gamma|=m$, we introduce the matrix of correctors $\chi=\big(\chi_j^\ga(y,s)\big)= \big(\chi^\gamma_{ij}(y,s)\big)$ for the family of operators $\pa_s+\mathcal{L}_\varepsilon(\va>0)$, given by the following cell problem in $Y$,
\begin{align}\label{corrector}
\begin{split}
 &\frac{\pa(\chi_{ij}^\ga)}{\pa s} +(-1)^m\sum_{|\alpha|=|\beta|=m}  D^\alpha \left\{A_{ik}^{\alpha \beta}(y,s)D^\beta \chi_{kj}^\ga\right\}
 = (-1)^{m+1}\sum_{|\alpha|=m}  D^\alpha   A_{ij}^{\alpha \gamma}(y,s)
 ~~    \text{ in } Y,\\
 &\chi_{j}^\gamma (y,s)\quad\text{is 1-periodic in } (y,s),  \quad\text{and}\quad
  \int_Y \chi_{j}^\gamma (y,s)\, dy\,ds=0, \quad 1\leq j\leq n,
 \end{split}
\end{align}
where $ \chi_j^\gamma =\big( \chi_{1j}^\gamma, \chi_{2j}^\gamma,...., \chi_{nj}^\gamma \big)$ for each fixed $\gamma$ and $j$. Under conditions (\ref{cod1}) and (\ref{cod2}),  the existence of $\chi_j^\gamma$ follows from standard existence results of general parabolic systems.

For $1\leq i,j\leq n$ and $d$-dimensional multi-indices $\alpha,\beta$ of degree $m$, we set
\begin{align}\label{abar}
\bar{A}_{ij}^{\alpha \beta}&= \int_Y  \Big\{A_{ij}^{\alpha \beta}(y,s)
+ \sum_{|\gamma|=m}A_{i\ell}^{\alpha \gamma}(y,s)D^\gamma \chi_{\ell j}^\beta (y,s) \Big\}\,dyds\nonumber\\
&=\sum_{|\eta|=|\zeta|=m}\int_Y A^{\zeta\eta}(y,s)D^\eta\Big(\frac{1}{\be!}y^\be e_j+\chi_j^\be(y,s)\Big) \cdot D^\zeta \Big( \frac{1}{\al!}y^\al e_i\Big)\,dyds,
\end{align}
where $y^\al=y_1^{\al_1}y_2^{\al_2},...,y_d^{\al_d}$.

In this section, the symbols $\imath, \jmath$ may equal $(d+1)$ or represent $d$-dimensional multi-indices of degree $m$.
We define $B_{ij}^{\imath\beta}$ for $|\beta|=m$ by
\begin{align}\label{duc}
B_{ij}^{\imath\beta} =\left\{
  \begin{array}{ll}
   A_{ij}^{\alpha\beta} + \sum_{|\gamma|=m}A_{ik}^{\alpha\gamma}  (D^\gamma \chi_{kj}^\beta) -\bar{A}_{ij}^{\alpha\beta}, &\textrm{ if } \imath=\alpha,\\
   (-1)^m \chi_{ij}^\be, &\textrm{ if } \imath=d+1,
  \end{array}
\right.
\end{align}
where $\alpha$ is a $d$-dimensional multi-index of degree $m$.
Also, we define
\begin{align*}
  \widehat{u}(s)=\int_{Y_d}u(y, s)dy,
\end{align*}
for 1-periodic function $u(y, s)$ in $\R^{d+1}$, where $Y_d=[-1/2, 1/2)^d$.

The following lemma gives the existence of the matrix of flux correctors for the family of operators $ \pa_s+ \mathcal{L}_\varepsilon (\va>0)$.
\begin{lemma}\label{le2.1}
For any $1\leq i,j\leq n$ and $d$-dimensional multi-index $\beta$ with
$|\beta|=m$, there exist 1-periodic functions $\mathfrak{B}_{ij}^{\imath\jmath\beta}(y, s)$ in $\R^{d+1}$ such that
\begin{align}\label{le21re1}
\mathfrak{B}_{ij}^{\imath\jmath\beta} =- \mathfrak{B}_{ij}^{\jmath\imath\beta},\quad \sum_{|\gamma|=m}D^{\ga} \mathfrak{B}_{ij}^{\ga\jmath\beta}(y,s)+ \pa_s \mathfrak{B}_{ij}^{(d+1)\,\jmath\beta}(y,s)= B_{ij}^{\jmath\beta}(y,s)-\widehat{B}_{ij}^{\jmath\beta}(s).
\end{align}
Furthermore, there exists a constant $C$ depending only on $d, n, m, \mu$ such that
\begin{align}\label{le21re2}
  \begin{split}
    &\|\mathfrak{B}_{ij}^{\imath\jmath\be} \|_{L^2(-1/2, 1/2; H^m(Y_d))}\leq  C \quad\textrm{if } \imath, \jmath \textrm{ are $d$-dimensional multi-indices of degree $m$},\\
    &\|\mathfrak{B}_{ij}^{\imath\jmath\be} \|_{L^2(-1/2, 1/2; H^{2m}(Y_d))}\leq  C \quad\text{if }  \imath \textrm{ or } \jmath =d+1.
    \end{split}
\end{align}
\end{lemma}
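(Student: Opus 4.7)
The plan is to construct $\mathfrak{B}$ from a Helmholtz-type potential obtained by solving a periodic higher-order Poisson problem in the spatial variable only, then antisymmetrising its derivatives, with the $(d+1)$-component treated as a distinguished case. First, for each fixed $s$ I would solve
\[
  \Delta_y^m \eta_{ij}^{\jmath\be}(y,s)=B_{ij}^{\jmath\be}(y,s)-\widehat{B}_{ij}^{\jmath\be}(s)\quad\text{in }Y_d,\qquad \int_{Y_d}\eta_{ij}^{\jmath\be}(y,s)\,dy=0,
\]
with periodicity in $y$. This is uniquely solvable since the right-hand side has zero $y$-mean by definition, and standard periodic elliptic regularity for $\Delta_y^m$ gives
$\|\eta_{ij}^{\jmath\be}(\cdot,s)\|_{H^{k+2m}(Y_d)}\le C\|B_{ij}^{\jmath\be}(\cdot,s)-\widehat{B}_{ij}^{\jmath\be}(s)\|_{H^k(Y_d)}$ for every $k\ge 0$.

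Next I would extract two compatibility facts from the corrector equation \eqref{corrector}. Rearranging \eqref{corrector} and comparing with the definition \eqref{duc} produces the space-time divergence-free identity
\[
  \sum_{|\al|=m} D^\al B_{ij}^{\al\be} + \pa_s B_{ij}^{(d+1)\be}=0.
\]
Separately, integrating \eqref{corrector} over $y\in Y_d$ annihilates all $D^\al$-terms by periodicity, so $\pa_s\!\int_{Y_d}\chi_{ij}^\be(y,s)\,dy=0$ and hence $\widehat{B}_{ij}^{(d+1)\be}$ is $s$-independent.

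Writing $c_\ga=m!/\ga!$, so that $\Delta_y^m=\sum_{|\ga|=m}c_\ga D^{2\ga}$, I would then set
\[
  \mathfrak{B}_{ij}^{\al\al'\be}=c_\al D^\al \eta_{ij}^{\al'\be}-c_{\al'} D^{\al'}\eta_{ij}^{\al\be},\qquad
  \mathfrak{B}_{ij}^{\al\,(d+1)\,\be}=c_\al D^\al \eta_{ij}^{(d+1)\be},
\]
together with $\mathfrak{B}_{ij}^{(d+1)(d+1)\be}=0$ and the opposite values by antisymmetry. The antisymmetry is built in. For $\jmath=d+1$ the identity \eqref{le21re1}$_2$ is immediate from $\sum_{|\al|=m}c_\al D^{2\al}=\Delta_y^m$ and the defining equation for $\eta^{(d+1)\be}$. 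For $\jmath=\al'$ spatial, a direct computation reduces the left-hand side to
\[
  (B^{\al'\be}-\widehat{B}^{\al'\be})-c_{\al'}D^{\al'}\bigl(E^\be+\pa_s\eta^{(d+1)\be}\bigr),\qquad E^\be:=\sum_{|\al|=m}D^\al\eta^{\al\be}.
\]
Applying $\Delta_y^m$ to $E^\be+\pa_s\eta^{(d+1)\be}$ and using both compatibility facts gives $\Delta_y^m(E^\be+\pa_s\eta^{(d+1)\be})=0$; since this quantity is periodic with zero $y$-mean, uniqueness for $\Delta_y^m$ forces it to vanish identically, and the error term disappears.

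For the regularity bounds I would feed appropriate source regularity into the estimate from step one. When both $\imath,\jmath$ are spatial, $\|B^{\jmath\be}\|_{L^2(Y)}\le C$ by ellipticity and the cell-problem energy estimate, so $\eta^{\jmath\be}\in L^2_s H^{2m}_y$ and hence $\mathfrak{B}^{\al\al'\be}\in L^2_s H^m_y$. When $\jmath=d+1$, one has $B^{(d+1)\be}=(-1)^m\chi^\be\in L^2_s H^m_y$ (cell problem plus periodic Poincar\'e), so $\eta^{(d+1)\be}\in L^2_s H^{3m}_y$, and thus $\mathfrak{B}^{\imath(d+1)\be}$ picks up the promised $L^2_s H^{2m}_y$ bound. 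The heart of the argument, and the part I expect to be most delicate, is the identity $E^\be+\pa_s\eta^{(d+1)\be}\equiv 0$: it crucially uses both the space-time divergence structure of $B$ and the $s$-independence of $\widehat{\chi}^\be$. Dropping either ingredient would leave a nonzero polyharmonic obstruction that the antisymmetric ansatz cannot absorb, which appears to be exactly the reason the $m=1$ construction in \cite{gsjfa2017} does not carry over verbatim.
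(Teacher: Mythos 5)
Your construction is essentially the paper's own proof: solve a periodic $m$-polyharmonic problem in $y$ for each fixed $s$ with right-hand side $B^{\jmath\beta}-\widehat{B}^{\jmath\beta}$, antisymmetrize derivatives of the resulting potentials (treating the $(d+1)$ index separately), invoke the divergence identity $\sum_{|\alpha|=m}D^\alpha B^{\alpha\beta}+\partial_s B^{(d+1)\beta}=0$ together with the $s$-independence of $\widehat{B}^{(d+1)\beta}$ and a Liouville/uniqueness argument for the periodic polyharmonic operator, and obtain \eqref{le21re2} from elliptic regularity applied to the sources $B^{\alpha\beta}\in L^2$ and $B^{(d+1)\beta}=(-1)^m\chi^\beta\in L^2_sH^m_y$. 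The only difference is cosmetic: you carry the multinomial weights $c_\gamma=m!/\gamma!$ in the definition of $\mathfrak{B}$ so that $\sum_{|\gamma|=m}c_\gamma D^{2\gamma}=\Delta_y^m$ exactly, which in fact tidies up a normalization the paper glosses over.
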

\begin{proof} For simplicity of presentations, let us suppress the subscripts $i,j$. Since for $|\beta|=m$, $B^{\imath\be}(y,s)$ are $1$-periodic in $\R^{d+1}$, and for any $s\in \R$, $$\int_{Y_d}\big[B^{\imath\beta}(y,s)-\widehat{B}^{\imath\beta}(s)\big]dy=0,$$
there exist $f^{\imath\beta}(\cdot, s)\in H^{2m}(Y_d)$ such that
\begin{equation*}
\begin{cases}
\Delta_d^mf^{\imath\beta}(y, s)=B^{\imath\beta}(y,s)-\widehat{B}^{\imath\beta}(s)\quad \textrm{in}\quad \R^d,\\
f^{\imath\beta}(\cdot, s)\ \textrm{is 1-periodic}\quad \textrm{in}\quad \R^d,
\end{cases}
\end{equation*}
where $\Delta_d$ denotes the Laplacian in $\R^d$.
We define for $|\beta|=m$,
\begin{align*}
  \mathfrak{B}^{\imath\jmath\beta}=\left\{
  \begin{array}{ll}
    D^\gamma f^{\alpha\beta}-D^\alpha f^{\gamma\beta},&\textrm{if }~ \imath=\gamma, \jmath=\alpha,\\
    D^\alpha f^{(d+1)\,\beta},&\textrm{if }~ \imath=\alpha, \jmath=d+1,\\
    -D^\alpha f^{(d+1)\,\beta},&\textrm{if }~\imath=d+1, \jmath=\alpha,\\
    0,&\textrm{if }~ \imath=\jmath=d+1,
  \end{array}\right.
\end{align*}
where $\alpha, \gamma$ are $d$-dimensional multi-indices of degree $m$. Obviously, $\mathfrak{B}^{\imath\jmath\beta} =- \mathfrak{B}^{\jmath\imath\beta}$. Since
\begin{align}\sum_{|\al|=m} D^{\al} B^{\al \beta} (y,s) + \pa_s B^{(d+1)\,\be}(y,s) =0,\label{le21eq}\end{align} we have  $$\Delta_d^m\Big[\sum_{|\al|=m} D^{\al} f^{\al \beta} (y,s) + \pa_s f^{(d+1)\,\be}(y,s)\Big]=0.$$
By the Liouville property for $\Delta_d^m$ and the  periodicity of $f$, we know that $\sum_{|\al|=m} D^{\al} f^{\al \beta} (\cdot, s) + \pa_s f^{(d+1)\,\be}(\cdot, s)$ is a constant. Consequently, it's not hard to verify that
$$\sum_{|\gamma|=m}D^{\ga} \mathfrak{B}^{\ga\jmath\beta}(y,s)+ \pa_s \mathfrak{B}^{(d+1)\,\jmath\beta}(y,s)= B^{\jmath\beta}(y,s)-\widehat{B}^{\jmath\beta}(s).$$

Moreover, note that for any $|\alpha|=|\beta|=|\gamma|=m$, $$\|\mathfrak{B}^{\gamma\alpha\beta}(s)\|_{H^m(Y_d)}\leq C\big(\|f^{\alpha\beta}(s)\|_{H^{2m}(Y_d)}+\|f^{\gamma\beta}(s)\|_{H^{2m}(Y_d)}\big)\leq C\big(\|B^{\alpha\beta}(s)\|_{L^2(Y_d)}+\|B^{\gamma\beta}(s)\|_{L^2(Y_d)}\big),$$which implies the first estimate in \eqref{le21re2}. Similarly, the second part  of \eqref{le21re2} follows from $$\|\mathfrak{B}^{(d+1)\,\jmath\beta}(s)\|_{H^{2m}(Y_d)}\leq C\|f^{(d+1)\,\beta}(s)\|_{H^{3m}(Y_d)}\leq C\|B^{(d+1)\,\beta}(s)\|_{H^m(Y_d)}=C\|\chi^\beta(s)\|_{H^m(Y_d)}.$$
The proof is complete.
 \end{proof}

 \begin{lemma}\label{lem22} Let $B^{\imath\beta}$ be defined as in \eqref{duc}. Then $\widehat{B}^{(d+1)\,\beta}(s)\equiv0$, and moreover
   for $|\alpha|=|\beta|=m$, there exist 1-periodic functions $\mathcal{B}^{\alpha\beta}(s)$ in $\R$ such that $\partial_s\mathcal{B}^{\alpha\beta}(s)=\widehat{B}^{\alpha\beta}(s)$ and $$\|\mathcal{B}^{\alpha\beta}\|_{H^1([-1/2, 1/2])}\leq C\|\widehat{B}^{\alpha\beta}\|_{L^2([-1/2, 1/2])}$$ for some positive constant $C$.
\end{lemma}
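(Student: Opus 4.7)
The plan is to prove the two claims in order, each reducing to an elementary integration and periodicity argument that combines the corrector equation \eqref{corrector}, the identity \eqref{le21eq}, and the definition of the homogenized coefficient $\bar A^{\alpha\beta}$ in \eqref{abar}.

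First I would handle $\widehat{B}^{(d+1)\,\beta}(s)\equiv 0$. By \eqref{duc}, $B^{(d+1)\,\beta}=(-1)^m\chi^\beta$, so the claim is equivalent to $\widehat{\chi}^\beta(s)\equiv 0$. Integrating the identity \eqref{le21eq} over $Y_d$ in $y$ makes the spatial divergence terms $D^\alpha B^{\alpha\beta}$ drop out (they are derivatives of order $m\ge 1$ of $1$-periodic functions), leaving $\partial_s \widehat{B}^{(d+1)\,\beta}(s)=0$. Hence $\widehat{B}^{(d+1)\,\beta}$ is a constant in $s$; integrating that constant over $s\in[-1/2,1/2]$ recovers $(-1)^m\int_Y\chi^\beta(y,s)\,dy\,ds$, which vanishes by the normalization in \eqref{corrector}. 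Therefore the constant is $0$.

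For the construction of $\mathcal{B}^{\alpha\beta}$, the essential observation is that $\widehat{B}^{\alpha\beta}$ has zero mean on $[-1/2,1/2]$. Indeed, from \eqref{duc} and \eqref{abar},
\[
\int_Y B^{\alpha\beta}(y,s)\,dy\,ds
=\int_Y\Big\{A^{\alpha\beta}+\sum_{|\gamma|=m}A^{\alpha\gamma}D^\gamma\chi^\beta\Big\}\,dy\,ds-\bar{A}^{\alpha\beta}=0,
\]
so $\int_{-1/2}^{1/2}\widehat{B}^{\alpha\beta}(s)\,ds=0$. This lets me define
\[
\mathcal{B}^{\alpha\beta}(s):=\int_{-1/2}^{s}\widehat{B}^{\alpha\beta}(\tau)\,d\tau-c^{\alpha\beta},
\]
where the constant $c^{\alpha\beta}$ is chosen so that $\int_{-1/2}^{1/2}\mathcal{B}^{\alpha\beta}=0$. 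Because the total mean of $\widehat{B}^{\alpha\beta}$ is zero, this antiderivative extends $1$-periodically to $\R$, and by construction $\partial_s\mathcal{B}^{\alpha\beta}=\widehat{B}^{\alpha\beta}$.

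The norm bound is then immediate: $\|\partial_s\mathcal{B}^{\alpha\beta}\|_{L^2([-1/2,1/2])}=\|\widehat{B}^{\alpha\beta}\|_{L^2([-1/2,1/2])}$ by construction, and since $\mathcal{B}^{\alpha\beta}$ has zero mean, the 1D Poincaré--Wirtinger inequality on the periodic interval gives $\|\mathcal{B}^{\alpha\beta}\|_{L^2}\leq C\|\partial_s\mathcal{B}^{\alpha\beta}\|_{L^2}=C\|\widehat{B}^{\alpha\beta}\|_{L^2}$; summing yields the desired $H^1$ estimate. I do not anticipate a genuine obstacle here: the whole proof is bookkeeping organised around the two mean-zero cancellations (one from the normalization of $\chi$, one from the definition of $\bar A$), and both have already been exploited elsewhere in Section 2.
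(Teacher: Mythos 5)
Your proposal is correct and follows essentially the same route as the paper: integrate \eqref{le21eq} over $Y_d$ to get $\partial_s\widehat{B}^{(d+1)\,\beta}=0$ and conclude by the mean-zero normalization of $\chi^\beta$, then note $\int_{-1/2}^{1/2}\widehat{B}^{\alpha\beta}=0$ (from the definition of $\bar A^{\alpha\beta}$) and take a periodic antiderivative. The only differences are cosmetic: the paper takes $\mathcal{B}^{\alpha\beta}(s)=\int_0^s\widehat{B}^{\alpha\beta}$ rather than the mean-zero normalization, and leaves the elementary $H^1$ bound implicit, which you spell out via Poincar\'e--Wirtinger.
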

\begin{proof}
Integrating equation \eqref{le21eq} over $Y_d$, we get $\partial_s\widehat{B}^{(d+1)\, \beta}(s)=0$, which together with the fact $\int_{-1/2}^{1/2}\widehat{B}^{(d+1)\, \beta}(s)ds=0$ implies that $\widehat{B}^{(d+1)\,\beta}(s)\equiv0$. Moreover, note that $\int_{-1/2}^{1/2}\widehat{B}^{\alpha\beta}(s)ds=0$. By setting $\mathcal{B}^{\alpha\beta}(s)=\int_0^s\widehat{B}^{\al\beta}(s)ds$, we get the desired function. The proof is complete.
 \end{proof}

Let $\mathcal{L}^*_\varepsilon$ be the adjoint operators of $\mathcal{L}_\varepsilon$,  i.e.,
\begin{align}\label{adj}
\mathcal{L}^*_\varepsilon=  (-1)^{m}\sum_{|\alpha|=|\beta|=m} D^\alpha\big(A^{*\alpha\beta}
(x/\va, t/\va^{2m}) D^\beta  \big),
\end{align} where $A^*=(A_{ij}^{*\alpha\beta})= ( A_{ji}^{\beta \alpha}).$
Parallel to (\ref{corrector}), we can introduce the matrix of correctors $\chi^*= (\chi^{*\al}_{i})= (\chi^{*\al}_{k i})$ for the operator    $-\partial_s+\mathcal{L}^*_\varepsilon$, where $(\chi^{*\al}_{i})= \big( \chi_{1i}^{*\al} (y), \chi_{2i}^{*\al} (y),...., \chi_{ni}^{*\al} (y)\big)$ is the solution to the following cell problem,
 \begin{align}\label{dualcorrector}
\begin{split}
 &-\frac{\pa(\chi_{ki}^{*\al})} {\pa s}  +(-1)^m\sum_{|\eta|=|\beta|=m}  D^\eta \Big\{A_{k\ell}^{*\eta \beta}(y,s)D^\beta \chi_{\ell i}^{*\al}\Big\}
 = (-1)^{m+1}\sum_{|\eta|=m}  D^\eta   A_{ki}^{*\eta \al}(y,s)
 ~~    \text{ in } Y,\\
 &\chi_i^{*\al} (y,s)\quad\text{is 1-periodic in } (y,s), \quad\text{and}\quad
  \int_Y \chi_i^{*\al} (y,s)\, dy\,ds=0, \quad 1\leq i\leq n.
 \end{split}
\end{align}
We can also introduce   $ \mathfrak{B}^{*\imath \jmath \beta}(y,s)$ and $ \mathcal{B}^{*\alpha\beta}(s)$ as Lemmas \ref{le2.1} and \ref{lem22}.
It is not difficult to see that $ \chi^{*\ga},  \mathfrak{B}^{*\imath \jmath \beta}$ and  $\mathcal{B}^{*\alpha\beta}(s) $ satisfy the same properties as $\chi^\ga, \mathfrak{B}^{\imath\jmath\beta}$ and $\mathcal{B}^{\alpha\beta}(s) $ respectively, since $A^*$ satisfies the same conditions as $A$.

Taking $\chi^{*\al}_i$ and $\chi_j^\ga$ as  test functions in (\ref{corrector}) and (\ref{dualcorrector}) respectively, we get
\begin{align}
&\int_{-1/2}^{1/2}\big\langle\frac{\pa(\chi_{j}^\ga)}{\pa s}, \chi_i^{*\al}\big\rangle\, ds
 = -\!\sum_{|\eta|=|\be|=m} \int_Y \Big\{ A^{\eta \beta}  D^\beta \chi_{j}^\ga D^\eta \chi_i^{*\al}+A^{\eta \be} D^\be\Big(\frac{y^\ga}{\ga!}e_j\Big)D^\eta \chi_i^{*\al}\Big\}\,dyds,\nonumber\\
 & \int_{-1/2}^{1/2}\big\langle\frac{\pa(\chi_{i}^{*\al})}{\pa s}, \chi_j^{\ga}\big\rangle\, ds =\sum_{|\eta|=|\beta|=m}\int_Y  \Big\{A^{*\eta \beta} D^\beta \chi_{i}^{*\al} D^\eta \chi_j^{\ga}+ A^{*\eta \be} D^\be\Big(\frac{y^\al}{\al!}e_i\Big)D^\eta \chi_j^{\ga} \Big\}\, dyds\nonumber
\end{align}
which by summation implies that
\begin{align*}
&\sum_{|\eta|=|\be|=m} \int_Y A^{\be\eta}(y,s)D^\eta \chi_j^{\ga} (y,s)D^\be(\frac{y^\al}{\al!}e_i)\, dyds\nonumber\\
& =\sum_{|\eta|=|\be|=m} \int_Y A^{*\eta \be}(y,s) D^\be(\frac{y^\al}{\al!}e_i)D^\eta \chi_j^{\ga}(y,s)\, dyds\nonumber\\
& =\sum_{|\eta|=|\be|=m} \int_Y A^{\eta \be}(y,s) D^\be\Big(\frac{y^\ga}{\ga!}e_j\Big)D^\eta \chi_i^{*\al}(y,s)\, dyds\nonumber\\
& = \sum_{|\eta|=|\be|=m} \int_Y A^{*\be\eta}(y,s) D^\eta \chi_i^{*\al}(y,s) D^\be\Big(\frac{y^\ga}{\ga!}e_j\Big)\, dyds.
\end{align*}
In view of (\ref{abar}), this provides another expression of $\bar{A}$ in terms of $A^*$ and $\chi^{*}$,
\begin{align}\label{abar1}
\bar{A}_{ij}^{\al\ga}&=\sum_{|\eta|=|\be|=m} \int_Y A^{\be\eta}(y,s)D^\eta \Big(\chi_j^{\ga}(y,s)+ \frac{y^\ga}{\ga!}e_j \Big)D^\be\Big(\frac{y^\al}{\al!}e_i\Big)\, dyds\nonumber\\
&= \sum_{|\eta|=|\be|=m} \int_Y A^{*\be\eta}(y,s) D^\eta \Big(\chi_i^{*\al}(y,s)+ \frac{y^\al}{\al!}e_i   \Big)D^\be\Big(\frac{y^\ga}{\ga!}e_j\Big)\, dyds.
\end{align}

\subsection{Effective operators and homogenized systems}
In this part, we prove that the effective  operator for  $\pa_t+\mathcal{L}_\varepsilon$ is $\pa_t +\mathcal{L}_0 $, where $\mc{L}_0$ is defined as in (\ref{hoop}). Let $ \bar{A}=(\bar{A}_{ij}^{\alpha \beta})$ be defined as in (\ref{abar}). In view of (\ref{corrector}), we have
\begin{align*}
\bar{A}_{ij}^{\alpha \beta}&=\int_Y \sum_{|\zeta|=|\eta|=m}A^{\zeta\eta} D^\eta\Big(\frac{y^\be}{\be!} e_j+\chi_j^\be \Big) \cdot D^\zeta \Big(\frac{y^\al}{\al!} e_i+\chi_i^\al \Big)\,
 dyds+\int_{-1/2}^{1/2}\big\langle\pa_s \chi^\be_j, \chi^\al_i \big\rangle\, ds.
\end{align*}
Therefore,
\begin{align*}
 \bar{A}_{ij}^{\alpha \beta}\xi^i_\al \xi^j_\be &=\sum_{|\zeta|=|\eta|=m}\int_Y A^{\zeta\eta} D^\eta\Big(\frac{y^\be}{\be!} e_j\xi_\be^j+\chi_j^\be \xi_\be^j\Big) \cdot D^\zeta \Big(\frac{y^\al}{\al!} e_i\xi_\al^i+\chi_i^\al \xi_\al^i\Big) \, dyds\\
 &\quad+
\int_{-1/2}^{1/2}\big\langle\pa_s \chi^\be_j \xi_\be^j, \chi^\al_i \xi_\al^i \big\rangle\, ds,
\end{align*}
for any $\xi=(\xi_\al)_{|\al|=m}$ with $ \xi_\al=(\xi_\al^i)\in\R^n.$
Note that
$$
\sum_{|\al|=|\be|=m} \int_{-1/2}^{1/2}\big\langle\pa_s \chi^\be_j \xi_\be^j, \chi^\al_i \xi_\al^i \big\rangle\, ds =
\frac{1}{2}\sum_{|\al|=|\be|=m}\int_{-1/2}^{1/2}\pa_s \big\langle \chi^\be_j \xi^j_\be, \chi^\al_i \xi^i_\al \big\rangle\, ds=0.
$$
Using (\ref{cod1}) and integration by parts we obtain that,
\begin{align}\label{ellipticity}
 \sum_{|\al|=|\be|=m} \bar{A}_{ij}^{\alpha \beta}\xi^i_\al \xi^j_\be &\geq \mu \sum_{|\zeta|=m}\!\int_Y\sum_{|\be|=m}\!D^\zeta\Big\{ \frac{y^\be}{\be!} e_j\xi_\be^j\!+\!\chi_j^\be(y,s)\xi_\be^j \Big\}
  \cdot \sum_{|\al|=m}\!D^\zeta\Big\{ \frac{y^\al}{\al!} e_i\xi_\al^i\!+\!\chi_i^\al(y,s)\xi_\al^i \Big\}\,
 \nonumber\\
 &= \mu \sum_{|\zeta|=m}|\xi_\zeta|^2 + \mu\sum_{|\zeta|=m} \int_Y   D^\zeta\Big\{\sum_{|\be|=m}\chi_j^\be(y,s)\xi_\be^j \Big\}  D^\zeta\Big\{\sum_{|\al|=m}\chi_i^\al(y,s)\xi_\al^i\Big\} \nonumber\\
&\geq \mu \sum_{|\zeta|=m}|\xi_\zeta|^2,
\end{align}
which, combined with \eqref{abar}, implies that $\bar{A}$ satisfies the ellipticity condition (\ref{cod1}) with $1/\mu$ replaced by some constant $\mu_0,$ depending only on $d,n,m$ and $\mu$. Therefore, for any $f\in  L^2(0,T; H^{-m}(\Om)),  h\in L^2(\Om)$, problem (\ref{hoeq1}) admits a unique solution $u_0 \in L^2(0,T; H_0^m(\Omega)) \cap L^\infty(0,T; L^2(\Omega)).$

\begin{theorem}\label{th2.1}
Let $\Om$ be a bounded Lipschitz domain in $\R^d.$ Assume that $A$ satisfies conditions (\ref{cod1})--(\ref{cod2}), and $f\in  L^2(0,T; H^{-m}(\Om)), h\in L^2(\Om).$  Let $u_\va, u_0\in L^2(0,T; H^m_0(\Om))\cap L^\infty(0,T; L^2(\Om))$ be the unique weak solutions to initial-Dirichlet problems (\ref{eq1}) and (\ref{hoeq1}), respectively. Then as $\va\longrightarrow 0$,
\begin{align}\label{t21re1}
u_\va \longrightarrow u_0  \text{ weakly in } L^2(0,T; H^m_0(\Om)) \text{ and strongly in }  L^2(0,T; H^{m-1}(\Om)).
\end{align}
\end{theorem}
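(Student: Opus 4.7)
The plan is to carry out qualitative periodic homogenization in three steps: uniform energy bounds, weak/strong compactness, and Tartar-style identification of the limit via the adjoint correctors $\chi^{*\al}$ from \eqref{dualcorrector}.

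\emph{Energy estimate and compactness.} Using $u_\va$ itself as a test function in the weak form of \eqref{eq1} and invoking ellipticity \eqref{cod1} followed by a Gronwall argument, one obtains
\begin{equation*}
\sup_{t\in[0,T]}\|u_\va(\cdot,t)\|_{L^2(\Om)}^2 + \int_0^T\|u_\va(\cdot,t)\|_{H^m_0(\Om)}^2\,dt \le C\bigl(\|h\|_{L^2(\Om)}^2 + \|f\|_{L^2(0,T;H^{-m}(\Om))}^2\bigr),
\end{equation*}
and reading the equation backwards gives $\|\pa_t u_\va\|_{L^2(0,T;H^{-m}(\Om))}\le C$. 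By Banach--Alaoglu a subsequence satisfies $u_\va\rightharpoonup u_*$ weakly in $L^2(0,T;H^m_0(\Om))$ and weakly-$*$ in $L^\infty(0,T;L^2(\Om))$, while Aubin--Lions applied with the compact embedding $H^m_0(\Om)\hookrightarrow\hookrightarrow H^{m-1}(\Om)$ yields strong convergence in $L^2(0,T;H^{m-1}(\Om))$.

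\emph{Identification of the limit.} Put $\xi_\va^\al := \sum_{|\be|=m}A^{\al\be}(x/\va,t/\va^{2m})D^\be u_\va$, a sequence bounded in $L^2(\Om\times(0,T))$, and let $\xi_*^\al$ be a weak $L^2$-limit along a further subsequence. The crux of the proof is the Tartar relation
\begin{equation*}
\xi_*^\al = \sum_{|\be|=m}\bar A^{\al\be}D^\be u_*,
\end{equation*}
which I would establish by the method of oscillating test functions. For each $\phi\in C_c^\infty(\Om\times(0,T))$, each $|\al_0|=m$, and each $1\le k\le n$, the rescaled function
\begin{equation*}
\Phi_\va(x,t) := \tfrac{x^{\al_0}}{\al_0!}e_k + \va^m \chi_k^{*\al_0}(x/\va,t/\va^{2m})
\end{equation*}
is an approximate null solution of $-\pa_t+\mc{L}_\va^*$ in virtue of \eqref{dualcorrector}. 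Substituting $\phi\Phi_\va$ into the weak form of \eqref{eq1} and $\phi u_\va$ into the identity satisfied by $\Phi_\va$ and subtracting, the top-order pairings $A^{\al\be}(x/\va,t/\va^{2m})D^\be u_\va\cdot D^\al\Phi_\va$ and $A^{*\al\be}(x/\va,t/\va^{2m})D^\be\Phi_\va\cdot D^\al u_\va$ cancel. The remaining terms pair a weakly convergent factor with a strongly convergent one, so their limits are computed by the mean value theorem for periodic $L^2$ functions (which replaces the oscillating factor $g(x/\va,t/\va^{2m})$ by its $Y$-average) together with the strong convergence of $u_\va$ in $L^2(0,T;H^{m-1}(\Om))$. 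The resulting identity is precisely \eqref{abar1} integrated against $\phi$, and varying $\phi,k,\al_0$ delivers the Tartar relation.

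\emph{Conclusion and main obstacle.} Plugging the identified flux into the weak formulation shows $u_*$ is a weak solution of \eqref{hoeq1}; by the ellipticity of $\bar A$ established in \eqref{ellipticity} this solution is unique, so $u_*=u_0$ and the entire family $\{u_\va\}$ converges without relabelling subsequences. The delicate point is the identification step: the corrector $\chi^{*\al_0}$ is only 1-periodic and $L^2_{\mathrm{per}}H^m$, so $\Phi_\va$ does not lie in $H^m_0(\Om)$ and its derivatives up to order $m$ are not uniformly bounded. One must therefore test against $\phi\in C_c^\infty$ (exploiting density of such $\phi$ in recovering the limit equation in the interior), perform the top-order cancellation exactly rather than pointwise, and carry out all averaging directly in $L^2$ — using that each spatial derivative of $\chi^{*\al_0}$ of order at most $m$ is a bounded $1$-periodic element of $L^2(Y)$ paired against a strongly convergent sequence. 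The rescaling in time on the scale $\va^{2m}$ (instead of $\va^2$) causes no new difficulty because the mean value theorem for periodic $L^2$ functions is insensitive to the particular rational scaling.
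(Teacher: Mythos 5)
Your proposal is correct and follows essentially the same route as the paper: uniform energy bounds plus Aubin--Lions compactness, then Tartar's oscillating test functions built from the adjoint correctors $\chi^{*\al}$ (your $\Phi_\va$ is exactly the paper's $\theta_\va$ with $P_m=\frac{y^{\al_0}}{\al_0!}e_k$, and by \eqref{dualcorrector} it is in fact an exact, not merely approximate, solution of the rescaled adjoint equation), with the limiting flux identified through \eqref{abar1} and the conclusion drawn from uniqueness for the homogenized problem. The only point the paper treats explicitly that you gloss over is the recovery of the initial condition $u_0(\cdot,0)=h$, obtained by testing with functions that do not vanish at $t=0$; this is standard and does not affect the validity of your argument.
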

\begin{proof}
The proof is adapted from Theorem 2.1 in \cite[p.140]{lions1978}, where similar results was proved for second order parabolic equations.
Note that $u_\va, \pa_t u_\va$ are uniformly (in $\va$) bounded in $L^2(0,T; H^m_0(\Om))$ and $L^2(0,T; H^{-m}(\Om))$, respectively, $\sum_{|\be|=m}A_\va^{\al\be} D^\be u_\va $ is uniformly bounded in $ L^2(0,T; L^2(\Om))$ for all $|\al|=m$, where $ A_\va^{\al\be}(x,t)=A^{\al\be}(x/\va, t/\va^{2m}).$
Up to subsequences, we may assume that there exists a function $u_0$ such that \begin{align} \label{pt210}
\begin{split}
&u_\va \longrightarrow u_0 \text{ weakly in } L^2(0,T; H^m_0(\Om)), \\
 &\pa_t u_\va \longrightarrow \pa_t u_0 \text{ weakly in } L^2(0,T; H^{-m}(\Om)),  \\
 & \sum_{|\be|=m}A_\va^{\al\be} D^\be u_\va \longrightarrow   \Psi^{\al}(x, t) \text{ weakly in } L^2(0,T; L^2(\Om)),\\
 &u_\va \longrightarrow u_0 \text{ strongly in } L^2(0,T; H^{m-1}(\Om)),
 \end{split}
\end{align}
where the last convergence result in (\ref{pt210}) follows from the well-known Aubin-Simon type compactness result.
Moreover,
\begin{align*}
\pa_t u_0 +(-1)^m \sum_{|\al|=m} D^\al \Psi^\al=f  \quad\quad \text{in } L^2(0,T; H^{-m}(\Om)).
\end{align*}
Taking $\phi\in C^1([0,T]; H^m_0(\Om))$ with $\phi(T)=0$ as a test function, we obtain that
\begin{align} \label{pt2101}
   -\int_\Om u_0(0) \phi(0)\, dx - \int_{\Om_T}  u_0  \pa_t \phi  \,dxdt+\sum_{|\al|=m}  \int_{\Om_T} \Psi^\al  D^\al \phi  \,dxdt  =\int_0^T \big\langle f(t), \phi(t)\big\rangle  \,dt,
\end{align}
where $\Om_T=\Om\times (0,T)$. Hereafter, let us denote the product of $H^{-m}(\Om)$ and $H_0^m(\Om)$ as $\langle, \rangle$  for short. On the other hand, taking such a $\phi$ as a test function in (\ref{eq1}) and passing to the limits, we obtain that
 \begin{align*}
 -\int_\Om h \phi(0)\, dx -  \int_{\Om_T}  u_0  \pa_t \phi \, dxdt +\sum_{|\al|=m} \int_{\Om_T} \Psi^\al  D^\al \phi \, dxdt   =\int_0^T \big\langle f(t), \phi(t)\big\rangle  \,dt,
 \end{align*}
which, combined with (\ref{pt2101}), implies that $u_0(0)=h.$
Therefore, to verify that $u_0$ is a weak solution of (\ref{hoeq1}), it remains to prove
\begin{align}\label{pt2102}
\Psi^\al= \sum_{|\be|=m}\bar{A}^{\al\be} D^\be u_0.
\end{align}

 For positive integer $k$, let
$$\mathfrak{P}_k=\Big\{(P^1_k, P^2_k,..., P^n_k)\mid  P_k^i \text{ are homogeneous polynomials of $y$ of degree } k  \Big\}.
$$ For $P_m\in \mathfrak{P}_m$, let $\om$ be the weak solution to the cell problem
\begin{align}\label{pt211}
\begin{split}
 & -\pa_s\om  +(-1)^m\sum_{|\alpha|=|\beta|=m}  D^\alpha \big(A^{*\alpha \beta} D^\beta \om\big)
 = (-1)^{m+1}\sum_{|\alpha|=m}  D^\alpha  \big(A^{*\alpha \be} D^\be P_m\big)
\quad \text{in } Y,\\
 &\,\,\om (y,s) \text{ is 1-periodic in } (y,s)  \quad \text{and} \quad
 \int_Y \om (y,s)\, dy\,ds=0.
 \end{split}
\end{align}
Setting $$\theta_\va=\theta_\va(x,t)=\va^m \left\{\om(x/\va, t/ \va^{2m}) + P_m(x/\va)\right\},$$ it is not difficult to find that, as $\va$ tend to zero,
\begin{align}\label{pt212}
 &\theta_\va  \longrightarrow P_m \text{ strongly  in } L^2(0,T; H^{m-1}(\Om)) \text{ and weakly  in } L^2(0,T; H^{m}(\Om)),
 \end{align}
 \begin{align}\label{pt2122}
&-\pa_t \theta_\va+(-1)^m\sum_{|\alpha|=|\beta|=m}  D^\alpha \big(A_\va^{*\alpha \beta} D^\beta \theta_\va\big)=0 \quad \text{in } \R^{d+1},
\end{align}
where $ A_\va^{*\alpha \beta}(x,t)=A^{*\alpha \beta}(x/\va, t/\va^{2m}).$
For any $\phi\in C_c^\infty(\Om\times(0,T)),$ we deduce from (\ref{eq1}) and (\ref{pt2122}) that
\begin{align}\label{pt213}
\begin{split}
&\int_0^T\big\langle \pa_t u_\va, \phi \theta_\va\big\rangle \, dt+ \sum_{|\al|=|\be|=m} \int_{\Om_T} A_\va^{\al\be} D^\be u_\va D^\al (\phi\theta_\va)\, dxdt=\int_0^T\big\langle f, \phi \theta_\va\big\rangle\, dt,\\
&-\int_0^T\big\langle \pa_t \theta_\va, \phi u_\va\big\rangle  \, dt+\sum_{|\al|=|\be|=m} \int_{\Om_T} A_\va^{*\al\be} D^\be \theta_\va D^\al (\phi u_\va)\, dxdt=0.
\end{split}
\end{align}
 Subtracting the second equality from the first one in (\ref{pt213}), it yields
 \begin{align}\label{pt214}
 &\sum_{\substack{|\al|=|\be|=m\\ \eta+\zeta=\al,   |\zeta|\leq m-1}} C(\zeta) \Big\{ \int_{\Om_T} A_\va^{\al\be} D^\be u_\va D^\eta\phi D^\zeta\theta_\va\, dxdt - \int_{\Om_T} A_\va^{*\al\be} D^\be \theta_\va D^\eta \phi D^\zeta u_\va\, dxdt \Big\}\nonumber\\
 &= \int_0^T\langle u_\va, \,\theta_\va \pa_t\phi\rangle dt +\int_0^T\langle f,\, \phi \theta_\va\rangle\, dt,\quad\quad C(\zeta)=\frac{\al!}{\zeta!(\al-\zeta)!}
 \end{align}
Thanks to the convergence results for $u_\va$ and $\theta_\va$ (see (\ref{pt210}) and (\ref{pt212})),  up to subsequences,
\begin{align}\label{pt215}
\begin{split}
&\int_0^T\big\langle u_\va, \theta_\va \pa_t\phi\big\rangle\, dt +\int_0^T\big\langle f, \phi \theta_\va\big\rangle\, dt \longrightarrow \int_0^T\big\langle u_0, \pa_t\phi P_m\big\rangle dt +\int_0^T\big\langle f, \phi P_m\big\rangle\, dt,\\
& \sum_{|\be|=m } \int_{\Om_T} A_\va^{\al\be} D^\be u_\va D^\eta\phi D^\zeta\theta_\va\, dxdt\longrightarrow  \int_{\Om_T} \Psi^{\al} D^\eta\phi D^\zeta P_m \, dxdt,
\end{split}\end{align}
for  $ \zeta+\eta=\al, |\zeta|\leq m-1.$ On the other hand, taking $\phi P_m$ as a test function in (\ref{eq1}) and passing to the limit in $\va,$ we get
\begin{align}\label{pt216}
-\int_0^T\langle  u_0,  \pa_t \phi P_m\rangle\, dt+\sum_{ |\al|=m}  \int_{\Om_T} \Psi^{\al} D^\al(\phi P_m)\, dxdt
 =  \int_0^T\big\langle f, \phi P_m\big\rangle\, dt.
\end{align}
By combing  (\ref{pt214}), (\ref{pt215})  and (\ref{pt216}), we obtain that
\begin{align}\label{pt217}
\sum_{\substack{|\al|=|\be|=m\\ \eta+\zeta=\al,   |\zeta|\leq m-1}}C(\zeta)  \int_{\Om_T} A_\va^{*\al\be} D^\be \theta_\va D^\eta \phi D^\zeta u_\va\, dxdt \longrightarrow - \sum_{|\al|=m} \int_{\Om_T} \Psi^{\al} D^\al P_m \phi\, dxdt.
\end{align}
Note that $  A^{*\al\be}(y,s) D^\be \theta(y)$ is periodic and uniformly bounded in $ L^2(0,T; L^2(\Om))$. By the well-known weak convergence result of rapidly oscillating periodic functions \cite[p.5]{jko1994},
\begin{align} \label{pt218}
 \sum_{|\be|=m} A_\va^{*\al\be} D^\be \theta_\va \quad\text{converges weakly to }  \sum_{|\be|=m} \int_Y A^{*\al\be}(y,s) \big(D^\be \om(y,s)+D^\be P_m \big)\, \doteq \mathcal{M}^\al
\end{align} in $L^2(0,T; L^2(\Om))$,
which implies that the l.h.s. of (\ref{pt217}) converges to
\begin{align}\label{pt219}
 &\sum_{\substack{\eta+\zeta=\al\\ |\al|=m,  |\zeta|\leq m-1}}C(\zeta)  \int_{\Om_T}\mathcal{M}^\al D^\eta \phi D^\zeta u_0\, dxdt \nonumber\\
&=  \sum_{|\al|=m}  \int_{\Om_T} \mathcal{M}^\al D^\al (\phi u_0)\, dxdt - \sum_{|\al|=m}   \int_{\Om_T} \mathcal{M}^\al D^\al u_0\phi \, dxdt\nonumber\\
 & =- \sum_{|\al|=m} \int_{\Om_T}  \mathcal{M}^\al D^\al u_0\phi \, dxdt.
\end{align}
In view of (\ref{pt217}), we get
\begin{align}
\sum_{|\al|=m} \int_{\Om_T} \Psi^{\al} D^\al P_m \phi\, dxdt= \sum_{|\al|=m} \int_{\Om_T} \mathcal{M}^\al D^\al u_0\phi\, dxdt, \quad\forall \phi\in C_c^\infty(\Om\times(0,T)).
\end{align}
Hence $$\sum_{|\al|=m} \Psi^{\al} D^\al P_m =\sum_{|\al|=m} \mathcal{M}^\al D^\al u_0.$$
For any $d$-dimensional multi-index $\ga$ with $|\ga|=m$, set $P_m=\frac{1}{\ga!}y^\ga e_i.$  Then by (\ref{pt211}) and (\ref{pt218}), we have  $\om=\chi^{*\ga}_i$  and
\begin{align*}
\Psi^\ga_i =\sum_{|\al|=m} \Big\{\sum_{|\zeta|=|\eta|=m} \int_Y A^{*\zeta\eta}(y,s) D^\eta\big(\chi^{*\ga}_i +\frac{y^\ga}{\ga!}e_i \big)  D^\zeta\big(\frac{y^\al}{\al!}e_j \big)\, dyds\Big\} D^\al u_{0j}
 = \sum_{|\al|=m} \bar{A}_{ij}^{\ga\al}D^\al u_{0j},
\end{align*}
which is exactly (\ref{pt2102}).
The proof is thus complete.
\end{proof}

Similar to Theorem \ref{th2.1}, we can prove that the homogenized operator for $-\pa_t + \mathcal{L}^*_\va$ is given by
\begin{align}\label{adj0}
-\pa_t+ \mathcal{L}^*_0  = -\pa_t+(-1)^m \sum_{|\alpha|=|\beta|=m}  D^\alpha (\bar{A}_{ij}^{*\alpha \beta}D^\beta ),
\end{align}
where $ \bar{A}^*=(\bar{A}_{ij}^{*\alpha\beta})= ( \bar{A}_{ji}^{\beta \alpha}).$ Furthermore, the same argument also gives the homogenized system for the initial value problem (\ref{eq1}) with homogeneous Neumann boundary data.
\begin{theorem}\label{th2.2}
Let $\Om$ be a bounded Lipschitz domain in $\R^d.$ Assume that $A$ satisfies conditions (\ref{cod1})--(\ref{cod2}), and $f\in  L^2(0,T; \wt{H}^{-m}(\Om)), h\in L^2(\Om).$  Let $u_\va, u_0\in L^2(0,T; H^m(\Om))\cap L^\infty(0,T; L^2(\Om))$ be the weak solutions to problem  (\ref{eq1})  and problem  (\ref{hoeq2}), respectively. Then as $\va\longrightarrow 0$,
\begin{align}\label{t21re1}
u_\va \longrightarrow u_0  \text{ weakly in } L^2(0,T; H^m(\Om)) \text{ and strongly in }  L^2(0,T; H^{m-1}(\Om)).
\end{align}
\end{theorem}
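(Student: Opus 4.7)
The plan is to repeat the strategy of Theorem \ref{th2.1} with only those modifications that are forced by the Neumann setting, namely enlarging the class of admissible test functions from $C^\infty_c(\Om\times(0,T))$ and $C^1([0,T];H^m_0(\Om))$ to $C^\infty(\overline{\Om}\times[0,T])$, and replacing the duality between $H^{-m}(\Om)$ and $H^m_0(\Om)$ by that between $\wt H^{-m}(\Om)$ and $H^m(\Om)$.

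First I would record the uniform energy estimates from \cite{dongtran2016} to conclude that $u_\va$ and $\pa_t u_\va$ are uniformly bounded in $L^2(0,T;H^m(\Om))$ and $L^2(0,T;\wt H^{-m}(\Om))$ respectively, while the fluxes $\sum_{|\be|=m} A_\va^{\al\be}D^\be u_\va$ are uniformly bounded in $L^2(\Om_T)$. Then, along a subsequence,
\begin{align*}
&u_\va \rightharpoonup u_0 \text{ in } L^2(0,T;H^m(\Om)), \quad \pa_t u_\va \rightharpoonup \pa_t u_0 \text{ in } L^2(0,T;\wt H^{-m}(\Om)),\\
&\sum_{|\be|=m}A_\va^{\al\be}D^\be u_\va \rightharpoonup \Psi^\al \text{ in } L^2(\Om_T), \quad u_\va \to u_0 \text{ in } L^2(0,T;H^{m-1}(\Om)),
\end{align*}
the last one by Aubin--Simon compactness. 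Plugging a test function $\phi\in C^\infty(\overline{\Om}\times[0,T])$ with $\phi(T)=0$ into the Neumann variational formulation \eqref{neu1} and passing to the limit, the $\va$-dependent boundary terms $\sum_{j=1}^{m-1}\langle N_{m-1-j}u_\va(t),\pa_\nu^j\phi(t)\rangle_{\pa\Om}$ disappear (they correspond to the homogeneous Neumann condition on $u_\va$), and I obtain
\begin{align*}
-\int_\Om u_0(0)\phi(0)\,dx - \int_{\Om_T}u_0\,\pa_t\phi\,dxdt + \sum_{|\al|=m}\int_{\Om_T}\Psi^\al D^\al\phi\,dxdt = \int_0^T\langle f(t),\phi(t)\rangle\,dt.
\end{align*}
Comparing this identity with the same identity derived from \eqref{neu1} after passage to the limit with $\phi\in C^1([0,T];H^m(\Om))$, $\phi(T)=0$, gives $u_0(0)=h$ in $L^2(\Om)$; the Neumann boundary conditions for $u_0$ are natural conditions that are encoded in the choice of test functions $\phi$ not required to vanish on $\pa\Om$.

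The remaining and only substantive point is to identify $\Psi^\al = \sum_{|\be|=m}\bar A^{\al\be}D^\be u_0$. Since this is a purely local identification, I would carry it out exactly as in the proof of Theorem \ref{th2.1}: for each homogeneous polynomial $P_m\in\mathfrak{P}_m$, let $\om$ solve the adjoint cell problem \eqref{pt211}, set $\theta_\va(x,t)=\va^m\{\om(x/\va,t/\va^{2m})+P_m(x/\va)\}$, and test \eqref{neu1} with $\phi\theta_\va$ for $\phi\in C^\infty_c(\Om\times(0,T))$ and \eqref{pt2122} (which holds on all of $\R^{d+1}$) with $\phi u_\va$. The Neumann boundary terms in \eqref{neu1} drop out because $\phi$ has compact support in $\Om$, so the subtraction produces exactly \eqref{pt214}. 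Combining with the weak-strong convergence input from \eqref{pt210} adapted to $H^m(\Om)$ instead of $H^m_0(\Om)$, and with the weak convergence of rapidly oscillating periodic functions, one reaches \eqref{pt218}--\eqref{pt219}. Taking $P_m=\frac{1}{\ga!}y^\ga e_i$ for each multi-index $|\ga|=m$ and each $1\le i\le n$, this yields $\Psi^\al = \sum_{|\be|=m}\bar A^{\al\be}D^\be u_0$. Since the limit problem \eqref{hoeq2} has a unique weak solution (the ellipticity \eqref{ellipticity} of $\bar A$ has already been established), the whole family converges, which proves \eqref{t21re1}.

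The main obstacle I anticipate is purely bookkeeping, namely verifying that the oscillating-test-function calculation localises cleanly inside $\Om$ and that the Neumann boundary terms of $u_\va$ indeed disappear in the identification of $\Psi^\al$. Because $\phi$ can be chosen with compact support for that step, no new analysis beyond the Dirichlet case is needed; the natural Neumann conditions for $u_0$ emerge automatically from the weak formulation with test functions not vanishing on $\pa\Om$, once $\Psi^\al$ has been identified.
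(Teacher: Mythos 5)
Your proposal is correct and follows essentially the same route as the paper, which proves Theorem \ref{th2.2} only by remarking that the argument of Theorem \ref{th2.1} carries over; your write-up simply supplies the details the paper omits (Neumann energy bounds, Aubin--Simon compactness, the local identification of $\Psi^\al$ with compactly supported $\phi$ so the boundary terms drop, and recovery of the natural boundary conditions and initial data from the enlarged class of test functions).
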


\section{Smoothing operators and auxiliary estimates }
We fix nonnegative functions $ \varphi_1(s)\in C_c^\infty(-1/2, 1/2), \,\varphi_2(y)\in C_c^\infty(B(0,1/2)),$ such that $$\int_{\R} \varphi_1(s)=1  \quad\text{and}\quad  \int_{\mathbb{R}^{d}} \varphi_2(y) =1.$$
Set   $\varphi_{1,\varepsilon} (s)=\frac{1}{\varepsilon^{2m}} \varphi_1(s/\va^{2m}), \,\varphi_{2,\varepsilon}(y)=\frac{1}{\varepsilon^{d}} \varphi_2(y/\va),$   and define
\begin{align}\label{sva}
\begin{split}
&\widetilde{S}_\varepsilon(f)(x,t)=\int_{\mathbb{R}^{d}}  \varphi_{2,\va} (y)f(x-y, t)\, dy=\int_{\mathbb{R}^d}\varphi_2(y)f(x-\va y, t)  dy,\\
&S_\varepsilon(f)(x,t)=\int_{\mathbb{R}^{d+1}} \varphi_{1,\va} (s) \varphi_{2,\va} (y)f(x-y, t-s)=\int_{\mathbb{R}^{d+1}} \varphi_1(s) \varphi_2(y)f(x-\va y, t-\va^{2m} s). \\
\end{split}
\end{align}
By the definition, it is obvious that
\begin{align}\label{wtsva}
S_\va(f)(x,t)=\int_{\R} \varphi_{1,\va}(s) \wt{S}_\va(f)(x,t-s)ds
\end{align}

\begin{lemma}\label{le2.2}
Let $f\in L^p(\mathbb{R}^{d+1})$ for some $1\!\leq \!p<\!\infty$, $g\in L_{loc}^p(\mathbb{R}^{d+1})$ and  $h\in L^\infty(\mathbb{R}^{d+1})$ with support $\mathcal{O}$. Then
\begin{align*}
\|g^\varepsilon S_\varepsilon(f) h \|_{L^p(\mathbb{R}^{d+1})}
\leq C \sup_{z\in \mathbb{R}^{d+1}}\Big( \fint_{B(z,1)} |g|^p  \Big)^{1/p} \|f\|_{L^p(\mathcal{O}(\varepsilon) )}\| h\|_\infty,
\end{align*}
where $g_\varepsilon (x,t)=g(x/\va, t/\va^{2m})$,
$\mathcal{O}(\varepsilon)=\big\{z\in \mathbb{R}^{d+1}: dist(z,\mathcal{O})<\varepsilon \big\}$.  If in addition  $g(y,s)$ is 1-periodic in $(y,s)$, then
\begin{align*}
\|g_\varepsilon S_\varepsilon(f) h \|_{L^p(\mathbb{R}^{d+1})}\leq C   \|g\|_{L^p(Y)}\|f\|_{L^p(\mathcal{O}(\varepsilon) )} \| h\|_\infty.
\end{align*}
\end{lemma}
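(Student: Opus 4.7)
The plan is to reduce the claimed $L^p$ bound on $g_\va S_\va(f)h$ to a uniform pointwise bound on a localized average of $|g_\va|^p$. First I would use Jensen's inequality to dispose of the oscillatory smoothing in $S_\va f$, then apply Fubini together with a translation to decouple $f$, and finally handle $g_\va$ via the parabolic rescaling $(Y,S)=(X/\va,T/\va^{2m})$. The main technical point throughout is the anisotropic scaling $(\va,\va^{2m})$, which distinguishes this lemma from its second-order analogue in \cite{gsjfa2017}.

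Since $\varphi_1(s)\varphi_2(y)\,dy\,ds$ is a probability measure on $\R^{d+1}$, Jensen's inequality with the convex function $|\cdot|^p$ yields the pointwise bound
\[
|S_\va(f)(x,t)|^p\le\int_{\R^{d+1}}\varphi_1(s)\varphi_2(y)\bigl|f(x-\va y,\,t-\va^{2m}s)\bigr|^p\,dy\,ds.
\]
Multiplying by $|g_\va(x,t)|^p|h(x,t)|^p$, pulling out $\|h\|_\infty^p$, interchanging the order of integration, and changing variables $(u,\tau)=(x-\va y,\,t-\va^{2m}s)$ in the outer integral, I would reduce the problem to controlling
\[
\int\varphi_1(s)\varphi_2(y)\int|g_\va(u+\va y,\tau+\va^{2m}s)|^p\mathbf{1}_{\mathcal{O}}(u+\va y,\tau+\va^{2m}s)|f(u,\tau)|^p\,du\,d\tau\,dy\,ds.
\]
Since $\mathrm{supp}\,\varphi_2\subset B(0,1/2)$ and $\mathrm{supp}\,\varphi_1\subset(-1/2,1/2)$ force $|\va y|\le\va/2$ and $|\va^{2m}s|\le\va^{2m}/2$, for $\va\le1$ the shifted indicator confines $(u,\tau)$ to $\mathcal{O}(\va)$. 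After one more application of Fubini, everything collapses to proving the uniform pointwise estimate
\[
\Phi_\va(u,\tau):=\int\varphi_1(s)\varphi_2(y)\,|g_\va(u+\va y,\tau+\va^{2m}s)|^p\,dy\,ds\le C\sup_{z\in\R^{d+1}}\fint_{B(z,1)}|g|^p.
\]

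The heart of the argument is this uniform bound on $\Phi_\va$, and here the anisotropic parabolic scaling enters. Using $\|\varphi_1\|_\infty,\|\varphi_2\|_\infty<\infty$, I would first dominate $\Phi_\va(u,\tau)$ by a constant multiple of the average of $|g_\va|^p$ over the cylinder $Q_\va(u,\tau):=B(u,\va/2)\times(\tau-\va^{2m}/2,\tau+\va^{2m}/2)$. The substitution $(Y,S)=(X/\va,T/\va^{2m})$ maps $Q_\va(u,\tau)$ onto the unit-scale cylinder $Q_\va'(u,\tau):=B(u/\va,1/2)\times(\tau/\va^{2m}-1/2,\tau/\va^{2m}+1/2)$, whose Euclidean diameter in $\R^{d+1}$ is bounded by a constant depending only on $d$. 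A direct Jacobian computation gives $\fint_{Q_\va}|g_\va|^p=c_d\int_{Q'_\va}|g|^p$, and covering $Q_\va'$ by $N_d$ unit Euclidean balls with $N_d=N_d(d)$ and then invoking $\int_{B(z,1)}|g|^p\le|B(0,1)|\sup_z\fint_{B(z,1)}|g|^p$ produces the desired bound on $\Phi_\va$. Combining all inequalities and taking $p$-th roots yields the first estimate.

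The second estimate follows at once from the first: $1$-periodicity of $g$ implies $\int_{B(z,1)}|g|^p\le C_d\|g\|_{L^p(Y)}^p$ for every $z$, since $B(z,1)$ is covered by a number of translates of the period cell $Y$ depending only on $d$; dividing by $|B(0,1)|$ gives $\sup_z\fint_{B(z,1)}|g|^p\le C\|g\|_{L^p(Y)}^p$. The main obstacle I anticipate is keeping all covering constants uniform in $\va$; the parabolic rescaling above handles this by moving $Q_\va$ to unit scale \emph{before} any covering is applied, so that the number of unit balls needed is universal in $\va$, and the $\va$-dependence cancels exactly between the Jacobian and the volume of $Q_\va$.
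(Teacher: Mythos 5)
Your proof is correct, and it is essentially the standard argument: the paper itself does not write out a proof but simply cites \cite[Lemma 3.3]{gsjfa2017} and \cite[Lemma 2.1]{shenan2017}, whose proofs proceed exactly as you do (Jensen with respect to the probability measure $\varphi_1\varphi_2\,dyds$, Fubini with the translation $(u,\tau)=(x-\va y,t-\va^{2m}s)$, and the anisotropic rescaling/covering step, with periodicity handled at the end). The only implicit point worth keeping in mind is the standing assumption $0<\va\le 1$, which you correctly invoke so that the shift of size $\sqrt{\va^2+\va^{4m}}/2$ stays inside the $\va$-neighborhood $\mathcal{O}(\va)$.
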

\begin{proof}
See \cite[Lemma 3.3]{gsjfa2017} and also \cite[Lemma 2.1]{shenan2017}.
\end{proof}

For $k>0$, let  $ \Omega_{ k\varepsilon}=\{x\in\Omega: dist(x, \partial \Omega)\leq k\varepsilon\}$ and
 $$\Om_{T, k\varepsilon}= \Omega_{k\varepsilon}\times (0,T) \cup \Om\times  (0, k\va^{2m}]\cup \Om\times[T-k\va^{2m}, T).$$
\begin{lemma}\label{le2.3}
Let  $S_\va(f)$ be defined as in (\ref{sva}). Then for any $1\!<\!q\!<\infty,$ and any  integer $\ell>0,$  \begin{align}
& \|\pa_t S_\varepsilon(f)\|_{L^q(\Om_T\setminus \Om_{T,2\va})}\leq C \varepsilon^{-2m}\|f\|_{L^q(\Om_T\setminus \Om_{T,\va})},\label{le23re1}\\
&  \|\na^\ell S_\varepsilon(f)\|_{L^q(\Om_T\setminus \Om_{T,2\va} )}\leq C \varepsilon^{-\ell} \|f\|_{L^q(\Om_T\setminus \Om_{T,\va})},\label{le23re2}\\
& \|  S_\varepsilon(\na^\ell f) - \na^\ell  f\|_{L^q(\Om_T\setminus \Om_{T,4\va})}\leq C \va  \|\nabla^{\ell+1} f\|_{L^q(\Om_T\setminus \Om_{T,2\va})}\nonumber\\
& \quad\quad\quad\quad\quad\quad\quad\quad\quad\quad\quad\quad \quad  + C   \va^{m-\ell+1}\|\pa_t f\|_{L^q(\va^{2m},T-\va^{2m}; W^{-m+1, q}(\Om\setminus\Om_{2\va}))},\label{le23re3}
\end{align}
where $C$ depends only on $d, q, \Om, T.$
\end{lemma}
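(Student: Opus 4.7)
The plan is straightforward for \eqref{le23re1} and \eqref{le23re2}, and considerably more delicate for \eqref{le23re3}. For \eqref{le23re1} I would write $S_\va f$ as a convolution with the tensor product $\varphi_{1,\va}\otimes\varphi_{2,\va}$, use the identity $\pa_t f(x-y,t-s)=-\pa_s f(x-y,t-s)$, and integrate by parts in $s$ to arrive at $\pa_t S_\va f=\varphi_{1,\va}'\otimes\varphi_{2,\va}\ast f$. Since $\|\varphi_{1,\va}'\|_{L^1(\R)}\le C\va^{-2m}$ and $\|\varphi_{2,\va}\|_{L^1(\R^d)}=1$, Young's inequality yields the required bound; the shift from $\Om_{T,2\va}$ to $\Om_{T,\va}$ on the right simply absorbs the $\va/2$ spatial and $\va^{2m}/2$ temporal radii of the mollifier. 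Estimate \eqref{le23re2} is proved in the same spirit by transferring $\na^\ell$ onto $\varphi_{2,\va}$, using $\|\na^\ell\varphi_{2,\va}\|_{L^1(\R^d)}\le C\va^{-\ell}$.

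For \eqref{le23re3}, I would first exploit \eqref{wtsva} to decompose $S_\va(\na^\ell f)-\na^\ell f = \mathrm{I}+\mathrm{II}$, where
\begin{align*}
\mathrm{I}(x,t)&:=\wt S_\va(\na^\ell f)(x,t)-\na^\ell f(x,t),\\
\mathrm{II}(x,t)&:=\int_{\R}\varphi_{1,\va}(s)\bigl[\wt S_\va(\na^\ell f)(x,t-s)-\wt S_\va(\na^\ell f)(x,t)\bigr]ds.
\end{align*}
The spatial piece $\mathrm{I}$ is the standard mollifier error: a first-order Taylor expansion in $y$ gives $\mathrm{I}(x,t)=-\va\int_0^1\!\int_{\R^d}\varphi_2(y)\,y\cdot\na^{\ell+1}f(x-\sigma\va y,t)\,dyd\sigma$, so that Young's inequality produces $\|\mathrm{I}\|_{L^q(\Om_T\setminus\Om_{T,4\va})}\le C\va\|\na^{\ell+1}f\|_{L^q(\Om_T\setminus\Om_{T,2\va})}$, which is the first term on the right-hand side of \eqref{le23re3}.

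The main obstacle is controlling the temporal piece $\mathrm{II}$ using only a $W^{-m+1,q}$-norm of $\pa_t f$. Here I would apply the fundamental theorem of calculus in $t$ together with the commutation $\pa_t\wt S_\va=\wt S_\va\pa_t$, getting
\[
\wt S_\va(\na^\ell f)(x,t-s)-\wt S_\va(\na^\ell f)(x,t)=-\int_0^{s}\wt S_\va(\na^\ell\pa_t f)(x,t-\tau)\,d\tau.
\]
The core auxiliary step is then the bound
\[
\|\wt S_\va(\na^\ell g)(\cdot,t)\|_{L^q(\R^d)}\le C\va^{1-m-\ell}\|g(\cdot,t)\|_{W^{-m+1,q}(\R^d)},
\]
which I would prove by writing $\wt S_\va(\na^\ell g)=g\ast\na^\ell\varphi_{2,\va}$, decomposing $g=\sum_{|\be|\le m-1}\pa^\be g_\be$ with $\sum\|g_\be\|_{L^q}\le C\|g\|_{W^{-m+1,q}}$, pushing each $\pa^\be$ through the convolution to produce kernels $\na^{\ell+|\be|}\varphi_{2,\va}$ whose $L^1$ norms are bounded by $C\va^{-\ell-|\be|}$, and applying Young's inequality. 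Plugging this into $\mathrm{II}$, using $|\varphi_{1,\va}|\le C\va^{-2m}$, $|s|,|\tau|\le\va^{2m}/2$, and Minkowski's inequality in $t$, produces the factor $\va^{2m}\cdot\va^{1-m-\ell}=\va^{m-\ell+1}$ that appears in \eqref{le23re3}.

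The delicate bookkeeping is that exactly $\ell+m-1$ spatial derivatives have to be absorbed by $\varphi_{2,\va}$: the $\ell$ from $\na^\ell$ plus the $m-1$ traded through the duality $W^{-m+1,q}\leftrightarrow W_0^{m-1,q'}$. This explains both the scaling $\va^{1-m-\ell}$ and the requirement that $\varphi_2\in C_c^\infty$ (so that all these kernel derivatives are controlled). Finally, the enlarged exterior set $\Om_T\setminus\Om_{T,4\va}$ on the left of \eqref{le23re3} accommodates the $\va/2$ spatial and $\va^{2m}/2$ temporal supports of the mollifier \emph{together with} the additional $|\tau|\le\va^{2m}/2$ time shift produced by the fundamental-theorem-of-calculus step, while the right-hand side involves only $\Om_T\setminus\Om_{T,2\va}$.
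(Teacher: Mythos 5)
Your proof is correct and uses the same skeleton as the paper's: identical treatment of \eqref{le23re1}--\eqref{le23re2} (the derivative falls on the mollifying kernel, costing $\va^{-2m}$ resp.\ $\va^{-\ell}$, with the $\va/2$, $\va^{2m}/2$ support radii explaining the $2\va\to\va$ change of region), and for \eqref{le23re3} the same splitting via \eqref{wtsva} into a spatial mollification error and a temporal averaging error, with the same $\va^{2m}\cdot\va^{1-m-\ell}$ scaling for the time piece. Where you genuinely differ is the implementation of the negative-norm step: the paper bounds $\|\pa_t\wt S_\va(\na^\ell f)\|_{L^q}$ by the duality estimate \eqref{ple237}, i.e.\ it pairs against $g\in C_c^\infty(\Om_T\setminus\Om_{T,2\va})$, integrates by parts to move all $m+\ell-1$ spatial derivatives together with the mollifier onto $g$, and uses $\|\na^{m+\ell-1}\wt S_\va(g)\|_{L^{q'}}\le C\va^{1-m-\ell}\|g\|_{L^{q'}}$; you instead estimate the slice-wise quantity $\|\wt S_\va(\na^\ell\pa_t f)(\cdot,t)\|_{L^q}$ directly, writing the $W^{-m+1,q}$ distribution as $\sum_{|\be|\le m-1}\pa^\be g_\be$ and applying Young's inequality with the kernels $\na^{\ell+|\be|}\varphi_{2,\va}$. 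The two devices are dual to one another and yield the same scaling; yours is a bit more hands-on (it needs the representation theorem for the dual space but avoids the adjoint-smoothing computation), while the paper's stays at the level of the space-time norm. One small imprecision to fix: your auxiliary bound is stated with $W^{-m+1,q}(\R^d)$ on the right, whereas only the norm on $\Om\setminus\Om_{2\va}$ is available; this is harmless because for $x\in\Om\setminus\Om_{4\va}$ (and $t-\tau\in(\va^{2m},T-\va^{2m})$) the kernels $\na^{\ell+|\be|}\varphi_{2,\va}(x-\cdot)$ are supported in $\Om\setminus\Om_{3\va}\subset\Om\setminus\Om_{2\va}$, so the representation and the Young estimate can be carried out on $\Om\setminus\Om_{2\va}$ — but this localization should be stated at the level of the auxiliary estimate itself, not only in your closing remark about the $4\va$ versus $2\va$ margins.
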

\begin{proof}  Note that
\begin{align}\label{ple231}
\|\pa_tS_\varepsilon(f )\|^q_{L^q(\Om_T\setminus \Om_{T,2\va})} &=\int_{\Om_T\setminus \Om_{T,2\va}}  \Big|\int_{\mathbb{R}^{d+1}}
\pa_t \varphi_\varepsilon (x-y, t-s)  f(y,s)     \, dyds\Big|^q   \,dx dt,
\end{align}
where $\varphi_\varepsilon (x-y, t-s)=\varphi_{1,\va}(t-s)\varphi_{2,\va}(x-y)$.
Using H\"{o}lder's inequality, we deduce that
\begin{align}\label{ple232}
 &\Big|\int_{\mathbb{R}^{d+1}}
\pa_t \varphi_\varepsilon (x-y, t-s)  f(y,s)     \, dy ds\Big|^q  \nonumber\\
&\leq   \Big\{\int_{\mathbb{R}^{d+1}}
|\pa_t \varphi_\varepsilon (x-y, t-s)|\, |f(y,s)|^q \,dyds \Big\} \Big\{\int_{\mathbb{R}^{d+1}}
|\pa_t \varphi_\varepsilon (x-y, t-s)|  \,dyds \Big\}^{q-1} \nonumber\\
&\leq C \va^{-2(q-1)m }   \Big\{\int_{\mathbb{R}^{d+1}}
|\pa_t \varphi_\varepsilon (x-y, t-s)|\, |f(y,s)|^q \,dyds \Big\}.
\end{align}
Taking (\ref{ple232}) into (\ref{ple231}) and using Fubini's theorem, it yields
\begin{align*}
\|\pa_t S_\varepsilon( f )\|^q_{L^q(\Om_T\setminus \Om_{T,2\va})}  &\leq C \varepsilon^{-2mq}
\int_{\Om_T\setminus \Om_{T,\va}} |f(y,s) |^q dyds,
\end{align*} which is exactly (\ref{le23re1}). The proof of (\ref{le23re2}) is similar and we therefore pass to (\ref{le23re3}).

Recall that (\ref{le23re3}) was essentially proved in  \cite[Lemma 3.2]{gsjfa2017} for the case $m=\ell=1, q=2$ by the Plancherel Theorem, which is not applicable for general $q$. By (\ref{wtsva}),
\begin{align}\label{ple234}
 \|S_\varepsilon(\na^\ell f)- \na^\ell f\|^q_{L^q(\Om_T\setminus \Om_{T,4\va})}
&\leq \int_{\Om_T\setminus \Om_{T,4\va}} \Big|\int_{\R}\varphi_1(s) \wt{S}_\va( \na^\ell f)(x,t-\va^{2m}s) ds -\wt{S}_\va(\na^\ell f)(x,t)\Big|^q \nonumber\\
& \quad  + \int_{\Om_T\setminus \Om_{T,4\va}} \big|\wt{S}_\va(\na^\ell f)(x,t) -\na^\ell f(x,t)\big|^q \doteq (I)+(II).
 \end{align}
Similar to (\ref{ple232}), we may use H\"{o}lder's inequality and Fubini's theorem to deduce that
\begin{align}\label{ple235}
(II)
 \leq C \int_{\Om_T\setminus \Om_{T,4\va}}  \int_{\R^d}\varphi_2(y)  \big|\na^\ell f(x-\va y, t)-\na^\ell f(x,t)\big|^q   dy\,dxdt
 \leq C  \va^q \|\na^{\ell+1} f\|^q_{L^q(\Om_T\setminus \Om_{T,2\va})}.
\end{align}
And also,
\begin{align}\label{ple236}
\begin{split}
(I)
&\leq \int_{\Om_T\setminus \Om_{T,4\va}}  \int_{\R}  \varphi_1(s)  \big|\wt{S}_\va( \na^\ell f)(x,t-\va^{2m}s)   -\wt{S}_\va(\na^\ell f)(x,t) \big|^q ds\, dxdt \\
&\leq C \va^{2mq}  \big\| \pa_t \wt{S}_\va(\na^\ell f)\big\|^q_{L^q(\Om_T\setminus \Om_{T,2\va})} \int_{\R}  \varphi_1(s) ds \\
&\leq C \va^{(m+1-\ell)q}  \| \pa_t  f\|^q_{L^q(\va^{2m},T-\va^{2m}; W^{-m+1, q}(\Om\setminus\Om_\va))},\end{split}
\end{align} which, together with (\ref{ple234}) and (\ref{ple235}), gives (\ref{le23re3}).
For the last step of (\ref{ple236}), we have used the following observation
\begin{align}\label{ple237}
\| \pa_t \wt{S}_\va(\na^\ell f)\|_{L^q(\Om_T\setminus \Om_{T,2\va})} \leq C \va^{-m-\ell+1}  \| \pa_t  f\|_{L^q(\va^{2m},T-\va^{2m}; W^{-m+1, q}(\Om\setminus\Om_\va))}.
\end{align}
To see this, we note that for any $g\in C_c^\infty(\Om_T\!\setminus\!\Om_{T,2\va})$ and $d$-dimensional multi-index $\eta$ with $|\eta|=\ell$,
\begin{align*}
 &\int_{\Om_T\setminus \Om_{T,2\va} }\pa_t \wt{S}_\va(\na^\ell f)(x,t)g(x,t)\,dxdt \nonumber\\
 &= \int_{2\va^{2m}}^{T-2\va^{2m}}\int_{\Om\setminus\Om_{2\va}}\pa_t\Big\{\int_{\R^d} \varphi_{2,\va}(x-y) D_y^\eta f(y,t) \, dy\Big\} g(x,t)dxdt \nonumber\\
&=(-1)^{\ell+1} \int_{2\va^{2m}}^{T-2\va^{2m}}\!\Big\{\int_{\Om\setminus\Om_{2\va}} \int_{\R^d} D_x^\eta\varphi_{2,\va}(x-y) f(y,t)  \pa_t g(x,t)\, dy dx \Big\} dt \nonumber\\
&=(-1)^{\ell+1}\int_{2\va^{2m}}^{T-2\va^{2m}}\!\Big\{\int_{\Om\setminus\Om_{\va}}  f(y,t) \pa_t\Big[\int_{\R^{d}} D_x^\eta \varphi_{2,\va}(x-y) g(x,t)dx\Big] dy\Big\} dt  \nonumber\\
&=(-1)^{\ell}\int_{ 2\va^{2m}}^{T- 2\va^{2m}}\!\big\langle \pa_t f(y,t),  D^\eta\wt{S}_\va (g)\big\rangle_{W^{-m+1,q}(\Om\setminus\Om_{\va})\times W_0^{m-1,q^{\prime}}(\Om\setminus\Om_{\va})}  dt, \,\quad q^{\prime}=q/(q-1),  \end{align*}
where  the integration by parts and Fubini's theorem have been used  for the second  and the third equalities respectively.
Therefore, using estimates similar to (\ref{le23re2}) we may deduce that
\begin{align*}
\big| \int_{\Om_T\setminus \Om_{T,2\va} }\pa_t \wt{S}_\va(\na^\ell f)(x,t)g(x,t) \big|&\leq C  \| \pa_t  f\|_{L^q(\va^{2m},T-\va^{2m}; W^{-m+1, q}(\Om\setminus\Om_{\va}))} \| \na^{m+\ell-1}\wt{S}_\va(g)\|_{L^{q^{\prime}}(\Om_T\setminus \Om_{T,\va})}\\
&\leq C\va^{-m-\ell+1} \| \pa_t  f\|_{L^q(\va^{2m},T-\va^{2m}; W^{-m+1, q}(\Om\setminus\Om_{\va}))} \|g\|_{L^{q^{\prime}}(\Om_T\setminus \Om_{T,2\va})},
\end{align*}
which implies (\ref{ple237}) directly.  The proof is complete.
\end{proof}

\section{Convergence rates for the initial-Dirichlet problem}
\subsection{ $O(\sqrt{\va})$ error estimate in $L^2(0,T; H^{m}(\Om))$}
Let $\rho_\varepsilon$ be a function in $C_c^\infty(\Omega)$ such that
\begin{align*}
 supp(\rho_\varepsilon)\subset \Om\setminus\Omega_{6\varepsilon},\quad
 0\leq\rho_\varepsilon\leq 1,\quad\rho_\varepsilon=1  \text{ on } \Om\setminus\Omega_{8\varepsilon}
 \quad\text{and}\quad |\nabla^k \rho_\varepsilon|\leq C \varepsilon^{-k} \text{ for } 1\le k\le m.
 \end{align*}
For $0<T<\infty$, let $\varrho_\varepsilon$ be a function in $C_c^\infty(0,T)$  with $ supp(\varrho_\varepsilon) \subseteq (6\va^{2m}, T-6\va^{2m}) $ and
\begin{align*}
0\leq\varrho_\varepsilon\leq 1, \quad
 \varrho_\varepsilon=1  \text{ in } (8\va^{2m}, T-8\va^{2m}), \quad
 |   \varrho'_\varepsilon|\le C \varepsilon^{-2m} .
\end{align*}
Define
\begin{align}\label{w}
\varpi_\varepsilon(x, t)=u_\varepsilon(x, t)-u_0(x, t)-\varepsilon^m\sum_{|\gamma|=m}\chi^\gamma_\varepsilon(x, t)K_\varepsilon(D^\gamma u_0)(x, t),
\end{align}
where $K_\varepsilon(u)=S^2_\varepsilon(u)\rho_\varepsilon\varrho_\varepsilon$, $S^2_\varepsilon=S_\varepsilon\circ S_\varepsilon$ and $\chi_\varepsilon^\gamma(x, t)=\chi^\gamma(x/\varepsilon, t/\varepsilon^{2m})$.

\begin{lemma}\label{le3.1}
  Let $\Omega$ be a bounded Lipschitz domain in $\mathbb{R}^d$, and $A$ satisfy conditions (\ref{cod1}) and (\ref{cod2}). Let $u_\varepsilon, u_0$ be weak solutions to initial-Dirichlet problems (\ref{eq1}) and (\ref{hoeq1}), respectively. Then we have
\begin{align}
(\pa_t+\mc{L}_\va ) \varpi_\va&
=(-1)^{m+1}\sum_{|\al|=|\be|=m}  D^\al\Big\{\big(A_\va^{\alpha \beta} -\bar{A}^{\alpha \beta}\big)
 \big(D^\beta u_0-K_\varepsilon(D^\beta u_0)\big)\Big\}\nonumber\\
& \quad+ (-1)^{m+1}\sum_{\substack{|\alpha|=|\beta|=|\gamma|=m\\ \zeta+\eta =\gamma\\  0\leq|\zeta|\leq m-1}}
  C(\gamma, \zeta)\varepsilon^{|\eta|}  D^\alpha  \Big\{\big[A_\va^{\alpha \gamma}   (D^{\zeta}\chi^\beta)_\varepsilon-(D^\zeta\mathfrak{B}^{\gamma \alpha \beta})_\varepsilon\big]D^\eta K_\varepsilon(D^\beta u_0) \Big\}\nonumber\\
  &\quad +(-1)^m\varepsilon^{2m}\sum_{|\al|=|\be|=m}D^\alpha\Big\{\big(\mathfrak{B}_\varepsilon^{(d+1)\alpha\beta}+\mathcal{B}_\varepsilon^{\alpha\beta}\big)\partial_tK_\varepsilon(D^\beta u_0) \Big\}\nonumber\\
                              &\quad +(-1)^m \sum_{\substack{|\beta|=|\gamma|=m\\ \zeta+\eta =\gamma\\0\leq|\zeta|\leq m-1}}\varepsilon^{m+|\eta|}\pa_t\Big\{(D^\zeta\mathfrak{B}^{\gamma (d+1) \beta})_\varepsilon D^\eta K_\varepsilon(D^\beta u_0 )\Big\}\nonumber\\
  &\quad+(-1)^{m+1}\varepsilon^{2m}\sum_{|\alpha|=|\beta|=m}\partial_t\Big\{\mathcal{B}_\varepsilon^{\alpha\beta}D^\alpha K_\varepsilon(D^\beta u_0)\Big\}.\label{lem41}
\end{align}
\end{lemma}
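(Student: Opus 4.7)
The plan is a bookkeeping exercise built on the corrector equation (\ref{corrector}) and the flux-corrector identities of Lemmas \ref{le2.1} and \ref{lem22}. I would split the computation into three stages.

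\textit{Stage 1.} Since $(\pa_t+\mc{L}_\va)u_\va = f = (\pa_t+\mc{L}_0)u_0$, subtracting directly gives $(\pa_t+\mc{L}_\va)(u_\va - u_0) = (-1)^{m+1}\sum_{|\al|=|\be|=m}D^\al\bigl[(A_\va^{\al\be}-\bar A^{\al\be})D^\be u_0\bigr]$.

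\textit{Stage 2: Leibniz on the corrector piece.} Setting $V^\ga := \va^m \chi^\ga_\va$ and $W^\ga := K_\va(D^\ga u_0)$, I would apply Leibniz $D^\be(V^\ga W^\ga) = D^\be V^\ga\cdot W^\ga + \sum_{|\eta|\ge 1,\,\zeta+\eta=\be}\binom{\be}{\zeta}D^\zeta V^\ga D^\eta W^\ga$ to split $\mc{L}_\va(V^\ga W^\ga)$ into a ``principal'' part ($\eta=0$) and a ``mixed'' part. Using $D^\zeta V^\ga = \va^{|\eta|}(D^\zeta\chi^\ga)_\va$, summing over $\ga$ and swapping $\be\leftrightarrow\ga$ in the mixed part, the latter becomes $(-1)^m\sum_{\al,\be,\ga}\sum_{\zeta+\eta=\ga,\,|\zeta|\le m-1}C(\ga,\zeta)\va^{|\eta|}D^\al[A_\va^{\al\ga}(D^\zeta\chi^\be)_\va D^\eta W^\be]$, which upon subtraction supplies the $A_\va^{\al\ga}(D^\zeta\chi^\be)_\va$ half of Term~2 in (\ref{lem41}). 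The principal part, after the same relabelling, equals $(-1)^m\sum D^\al[(B^{\al\be}_\va+\bar A^{\al\be}-A^{\al\be}_\va)W^\be]$ by the definition (\ref{duc}) of $B^{\al\be}$. Combining with Stage~1 and simplifying algebraically, the ``$\bar A$'' contributions consolidate into the first line of (\ref{lem41}), leaving a spatial residue $(-1)^{m+1}\sum D^\al[B^{\al\be}_\va W^\be]$ together with a time residue $-\sum\pa_t(V^\ga W^\ga)$ coming from the $\pa_t$-part of $(\pa_t+\mc{L}_\va)(V^\ga W^\ga)$.

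\textit{Stage 3: Flux-corrector unfolding.} By Lemmas \ref{le2.1} and \ref{lem22}, $B^{\al\be}(y,s) = \sum_{|\ga|=m}D^\ga\mathfrak{B}^{\ga\al\be} + \pa_s\mathfrak{B}^{(d+1)\al\be} + \pa_s\mc{B}^{\al\be}$, so after rescaling $B_\va^{\al\be} = \va^m\sum_\ga D^\ga(\mathfrak{B}^{\ga\al\be})_\va + \va^{2m}\pa_t(\mathfrak{B}^{(d+1)\al\be})_\va + \va^{2m}\pa_t\mc{B}^{\al\be}_\va$. Substituting into the spatial residue and using Leibniz to pull each $D^\ga$ or $\pa_t$ past $W^\be$, three matchings occur: (i) the top-order piece $\va^m D^{\al+\ga}[(\mathfrak{B}^{\ga\al\be})_\va W^\be]$ vanishes under $\sum_{\al,\ga}$ by antisymmetry $\mathfrak{B}^{\ga\al\be} = -\mathfrak{B}^{\al\ga\be}$, while its $|\zeta|\le m-1$ descendants supply the $-(D^\zeta\mathfrak{B}^{\ga\al\be})_\va$ half of Term~2; (ii) the $|\zeta|=m$ descendant of $\va^{2m}\pa_t[(\mathfrak{B}^{(d+1)\al\be})_\va W^\be]$, after antisymmetry $\mathfrak{B}^{(d+1)\al\be}=-\mathfrak{B}^{\al(d+1)\be}$ and the identity $\sum_\ga D^\ga\mathfrak{B}^{\ga(d+1)\be} = B^{(d+1)\be} = (-1)^m\chi^\be$ (which follows from $\mathfrak{B}^{(d+1)(d+1)\be}=0$ and $\widehat{B}^{(d+1)\be}=0$), collapses to $\pa_t\sum V^\be W^\be$ and cancels the Stage~2 time residue; (iii) the remaining pieces distribute exactly as Term~3 (from the ``$\pa_t W^\be$'' halves of both $\mathfrak{B}^{(d+1)\al\be}$ and $\mc{B}^{\al\be}$), Term~4 (from the $|\zeta|\le m-1$ descendants of the $\mathfrak{B}^{(d+1)\al\be}$ divergence), and Term~5 (from the divergence of $\va^{2m}\pa_t\mc{B}^{\al\be}_\va W^\be$, since $\mc{B}^{\al\be}_\va$ is $x$-independent).

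The main obstacle is purely combinatorial. Because $\mc{L}_\va$ has order $2m$, Leibniz generates a double sum indexed by $(\mu,\nu,\zeta,\eta)$, and three separate antisymmetry cancellations of $\mathfrak{B}$ must be orchestrated together with several $\be\leftrightarrow\ga$ relabellings and a careful separation of the $|\zeta|=m$ and $|\zeta|\le m-1$ descendants at each Leibniz step. Once these are kept straight, the constants $C(\ga,\zeta)=\ga!/(\zeta!\eta!)$ line up with Taylor's formula and the five summands of (\ref{lem41}) emerge term by term.
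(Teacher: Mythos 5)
Your proposal is correct and follows essentially the same route as the paper's proof: a direct Leibniz computation on $\va^m\chi^\ga_\va K_\va(D^\ga u_0)$ combined with the definition (\ref{duc}) of $B^{\al\be}$, the flux-corrector identity of Lemma \ref{le2.1}, the skew-symmetry cancellations of $\mathfrak{B}$, and Lemma \ref{lem22} for $\widehat{B}^{\al\be}$. The only (cosmetic) difference is that you cancel the time residue $-\sum_\ga\pa_t\big(\va^m\chi^\ga_\va K_\va(D^\ga u_0)\big)$ by unfolding the $\pa_t\mathfrak{B}^{(d+1)\al\be}$ contribution back through $\sum_{|\al|=m} D^\al\mathfrak{B}^{\al(d+1)\be}=(-1)^m\chi^\be$, whereas the paper rewrites that time residue itself in flux-corrector form and cancels the two resulting $\va^{2m}$ cross terms by skew-symmetry -- an equivalent piece of bookkeeping.
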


\begin{proof}
For the simplicity of presentations, let us omit the subscripts $i,j$ in (\ref{le31re1}) and hereafter.
Using definitions of $\varpi_\varepsilon$ and $B^{\al\be}$ (see (\ref{w}) and (\ref{duc}) respectively),
a direct computation yields that
\begin{align}
 (\pa_t+\mc{L}_\va ) \varpi_\va&
=(-1)^{m+1}\sum_{|\al|=|\be|=m}  D^\al\Big\{\big(A_\va^{\alpha \beta} -\bar{A}^{\alpha \beta}\big)
 \big(D^\beta u_0-K_\varepsilon(D^\beta u_0)\big)\Big\} \nonumber\\
 &\quad+ (-1)^{m+1}\varepsilon^{m}  \sum_{\substack{|\alpha|=|\beta|=|\gamma|=m\\ \zeta+\eta =\beta\\  0\leq|\zeta|\leq m-1}}
C(\alpha, \zeta)  D^\alpha  \left\{A_\va^{\alpha \beta}   D^{\zeta}\chi_\va^\gamma   D^{\eta}  \big[K_\va(D^\gamma u_0 )\big]\right\} \nonumber\\
& \quad+ (-1)^{m+1}\sum_{|\al|=|\be|=m}D^\al\Big\{B_\va^{\alpha \beta}   K_\varepsilon(D^\beta u_0) \Big\}
 - \va^m   \sum_{|\ga|=m} \pa_t \big[ \chi_\va^\gamma K_\varepsilon(D^\gamma u_0 )\big],\label{iden_w}
\end{align}
where $  B_\va^{\al\be}(x, t)=  B^{\al\be}(x/\va, t/\va^{2m})$ and $C(\alpha, \zeta)=\frac{\alpha!}{\zeta!(\alpha-\zeta)!}$.
In view of \eqref{le21re1}, we may deduce that
\begin{align}
&(-1)^{m+1}\sum_{|\al|=|\be|=m}D^\al\Big\{B_\va^{\alpha \beta}   K_\varepsilon(D^\beta u_0) \Big\}
                - \va^m   \sum_{|\ga|=m} \pa_t \big[ \chi_\va^\gamma K_\varepsilon(D^\gamma u_0 )\big]\nonumber\\
&=(-1)^{m+1}\sum_{|\al|=|\be|=m}D^\al\Big\{\big(\sum_{|\gamma|=m}\varepsilon^mD^\gamma\mathfrak{B}_\varepsilon^{\gamma \alpha \beta}+\varepsilon^{2m}\partial_t\mathfrak{B}_\varepsilon^{(d+1)\alpha\beta}+\widehat{B}_\varepsilon^{\alpha\beta}\big) K_\varepsilon(D^\beta u_0) \Big\}\nonumber\\
  &\quad +(-1)^{m+1} \va^m \sum_{|\beta|=m} \pa_t \Big\{\big(\sum_{|\gamma|=m}\varepsilon^mD^\gamma\mathfrak{B}_\varepsilon^{\gamma (d+1) \beta}\big) K_\varepsilon(D^\beta u_0 )\Big\}\nonumber\\
 & =(-1)^{m+1}\varepsilon^m\sum_{|\al|=|\be|=|\gamma|=m}D^\al D^\gamma\Big\{\mathfrak{B}_\varepsilon^{\gamma \alpha \beta}K_\varepsilon(D^\beta u_0) \Big\}\nonumber\\
 &\quad+(-1)^m\varepsilon^m\sum_{\substack{|\alpha|=|\beta|=|\gamma|=m\\ \zeta+\eta =\gamma\\  0\leq|\zeta|\leq m-1}}
  C(\gamma, \zeta)  D^\alpha  \Big\{D^\zeta\mathfrak{B}_\varepsilon^{\gamma \alpha \beta}D^\eta K_\varepsilon(D^\beta u_0) \Big\}\nonumber\\
&\quad+(-1)^{m+1}\varepsilon^{2m}\sum_{|\al|=|\be|=m}D^\alpha\partial_t\Big\{\mathfrak{B}_\varepsilon^{(d+1)\alpha\beta}K_\varepsilon(D^\beta u_0) \Big\}\nonumber\\
&\quad+(-1)^m\varepsilon^{2m}\sum_{|\al|=|\be|=m}D^\alpha\Big\{\mathfrak{B}_\varepsilon^{(d+1)\alpha\beta}\partial_tK_\varepsilon(D^\beta u_0) \Big\}\nonumber\\
  &\quad +(-1)^{m+1} \va^{2m} \sum_{|\beta|=|\gamma|=m} \pa_tD^\gamma \Big\{\mathfrak{B}_\varepsilon^{\gamma (d+1) \beta}K_\varepsilon(D^\beta u_0 )\Big\}\nonumber\\
  &\quad+(-1)^m \va^{2m} \sum_{\substack{|\beta|=|\gamma|=m\\ \zeta+\eta =\gamma\\0\leq|\zeta|\leq m-1}} \pa_t\Big\{D^\zeta\mathfrak{B}_\varepsilon^{\gamma (d+1) \beta}D^\eta K_\varepsilon(D^\beta u_0 )\Big\}\nonumber\\
&\quad+(-1)^{m+1}\sum_{|\alpha|=|\beta|=m}D^\alpha\Big\{\widehat{B}_\varepsilon^{\alpha\beta}K_\varepsilon(D^\beta u_0)\Big\}\nonumber\\
  &=(-1)^m\varepsilon^m\sum_{\substack{|\alpha|=|\beta|=|\gamma|=m\\ \zeta+\eta =\gamma\\  0\leq|\zeta|\leq m-1}}
  C(\gamma, \zeta)  D^\alpha  \Big\{D^\zeta\mathfrak{B}_\varepsilon^{\gamma \alpha \beta}D^\eta K_\varepsilon(D^\beta u_0) \Big\}\nonumber\\
  &\quad+(-1)^m\varepsilon^{2m}\sum_{|\al|=|\be|=m}D^\alpha\Big\{\mathfrak{B}_\varepsilon^{(d+1)\alpha\beta}\partial_tK_\varepsilon(D^\beta u_0) \Big\}\nonumber\\
  &\quad+(-1)^m \va^{2m} \sum_{\substack{|\beta|=|\gamma|=m\\ \zeta+\eta =\gamma\\0\leq|\zeta|\leq m-1}} \pa_t\Big\{D^\zeta\mathfrak{B}_\varepsilon^{\gamma (d+1) \beta}D^\eta K_\varepsilon(D^\beta u_0 )\Big\}\nonumber\\
  &\quad+(-1)^{m+1}\sum_{|\alpha|=|\beta|=m}D^\alpha\Big\{\widehat{B}_\varepsilon^{\alpha\beta}K_\varepsilon(D^\beta u_0)\Big\},\label{iden_1}
\end{align}
where we have used the facts $\mathfrak{B}^{(d+1)(d+1)\beta}=0$ and $\widehat{B}^{(d+1)\,\beta}=0$ for the second step and the skew-symmetry of $\mathfrak{B}$ for the last step. Substituting \eqref{iden_1} into \eqref{iden_w}, we get
\begin{align}
(\pa_t+\mc{L}_\va ) \varpi_\va&
=(-1)^{m+1}\sum_{|\al|=|\be|=m}  D^\al\Big\{\big(A_\va^{\alpha \beta} -\bar{A}^{\alpha \beta}\big)
 \big(D^\beta u_0-K_\varepsilon(D^\beta u_0)\big)\Big\}\nonumber\\
& \quad+ (-1)^{m+1}\sum_{\substack{|\alpha|=|\beta|=|\gamma|=m\\ \zeta+\eta =\gamma\\  0\leq|\zeta|\leq m-1}}
  C(\gamma, \zeta)\varepsilon^{|\eta|}  D^\alpha  \Big\{\big[A_\va^{\alpha \gamma}   (D^{\zeta}\chi^\beta)_\varepsilon-(D^\zeta\mathfrak{B}^{\gamma \alpha \beta})_\varepsilon\big]D^\eta K_\varepsilon(D^\beta u_0) \Big\}\nonumber\\
  &\quad +(-1)^m\varepsilon^{2m}\sum_{|\al|=|\be|=m}D^\alpha\Big\{\mathfrak{B}_\varepsilon^{(d+1)\alpha\beta}\partial_tK_\varepsilon(D^\beta u_0) \Big\}\nonumber\\
  &\quad +(-1)^m \sum_{\substack{|\beta|=|\gamma|=m\\ \zeta+\eta =\gamma\\0\leq|\zeta|\leq m-1}}\varepsilon^{m+|\eta|}\pa_t\Big\{(D^\zeta\mathfrak{B}^{\gamma (d+1) \beta})_\varepsilon D^\eta K_\varepsilon(D^\beta u_0 )\Big\}\nonumber\\
  &\quad +(-1)^{m+1}\sum_{|\alpha|=|\beta|=m}D^\alpha\Big\{\widehat{B}_\varepsilon^{\alpha\beta}K_\varepsilon(D^\beta u_0)\Big\}.\label{lem41_1}
\end{align}

Moreover, thanks to Lemma \ref{lem22},
\begin{align*} (-1)^{m+1}\sum_{|\alpha|=|\beta|=m}D^\alpha\Big\{\widehat{B}_\varepsilon^{\alpha\beta}K_\varepsilon(D^\beta u_0)\Big\}&=(-1)^{m+1}\sum_{|\alpha|=|\beta|=m}D^\alpha\Big\{\varepsilon^{2m}\partial_t\mathcal{B}_\varepsilon^{\alpha\beta}K_\varepsilon(D^\beta u_0)\Big\}\nonumber\\
  &=(-1)^{m+1}\varepsilon^{2m}\sum_{|\alpha|=|\beta|=m}\partial_t\Big\{\mathcal{B}_\varepsilon^{\alpha\beta}D^\alpha K_\varepsilon(D^\beta u_0)\Big\}\nonumber\\ &\quad+(-1)^m\varepsilon^{2m}\sum_{|\alpha|=|\beta|=m}D^\alpha\Big\{\mathcal{B}_\varepsilon^{\alpha\beta} \partial_tK_\varepsilon(D^\beta u_0)\Big\}.
\end{align*}
This, together with \eqref{lem41_1}, gives \eqref{lem41}.
\end{proof}

In the following, we define
\begin{align}
w_\varepsilon(x, t)&=u_\varepsilon(x, t)-u_0(x, t)-\varepsilon^m\sum_{|\gamma|=m}\chi^\gamma_\varepsilon(x, t)K_\varepsilon(D^\gamma u_0)(x, t)\nonumber\\ &\quad +(-1)^{m+1} \sum_{\substack{|\beta|=|\gamma|=m\\ \zeta+\eta =\gamma\\0\leq|\zeta|\leq m-1}}\varepsilon^{m+|\eta|}(D^\zeta\mathfrak{B}^{\gamma (d+1) \beta})_\varepsilon D^\eta K_\varepsilon(D^\beta u_0) \nonumber\\
  &\quad +(-1)^m\varepsilon^{2m}\sum_{|\alpha|=|\beta|=m}\mathcal{B}_\varepsilon^{\alpha\beta}D^\alpha K_\varepsilon(D^\beta u_0).\label{def_w}
\end{align}

\begin{lemma}\label{le3.2}
In addition of  the assumptions of Lemma \ref{le3.1}, if $u_0\in L^2(0,T; H^{m+1}(\Om))$  we have for any $\phi \in L^2(0,T; H^m_0(\Omega))$,
\begin{align}\label{le32re2}
&\Big|\int_0^T \big\langle \pa_t w_\va, \phi\big\rangle  dt +  \sum_{|\alpha|=|\beta|=m} \int_{\Om_T} D^\alpha \phi A_\va^{\alpha\beta}
 D^\beta w_{\varepsilon}\Big| \nonumber\\
 &\leq  C \Big\{\|u_0\|_{L^2(0,T; H^{m+1}(\Om))}+ \|\pa_t  u_0\|_{L^2(0,T; H^{-m+1}(\Om))}+ \sup_{10\va^{2m}< t< T}\Big(\frac{1}{\va}\int^t_{t-10\va^{2m}} \| \nabla^m  u_0(t)\|^2_{L^2(\Om)} \Big)^{1/2} \Big\} \nonumber\\ &\quad  \times \left\{\varepsilon \|\nabla^m \phi\|_{L^2(\Omega_T)}  +\varepsilon^{1/2}  \|\nabla^m \phi\|_{L^2(\Omega_{T,8\varepsilon})} \right \},
\end{align}where $C$ depends only on $d, n, m, \mu, T$ and $\Omega.$
\end{lemma}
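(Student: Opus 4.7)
The plan is to compute $(\partial_t + \mathcal{L}_\varepsilon) w_\varepsilon$ in divergence form, integrate by parts against $\phi \in L^2(0,T;H^m_0(\Omega))$, and then estimate each resulting integrand. Write $w_\varepsilon = \varpi_\varepsilon + \Psi^{(1)}_\varepsilon + \Psi^{(2)}_\varepsilon$, where $\Psi^{(1)}_\varepsilon$ collects the $\mathfrak{B}^{\gamma(d+1)\beta}$-correction and $\Psi^{(2)}_\varepsilon$ the $\mathcal{B}^{\alpha\beta}$-correction in the definition \eqref{def_w}. The key algebraic observation is that the last two lines of the identity \eqref{lem41} in Lemma \ref{le3.1} are, up to sign, exactly $\partial_t\Psi^{(1)}_\varepsilon$ and $\partial_t\Psi^{(2)}_\varepsilon$, so replacing $\varpi_\varepsilon$ by $w_\varepsilon$ annihilates them and leaves
\[
(\partial_t + \mathcal{L}_\varepsilon) w_\varepsilon \;=\; T_1 + T_2 + T_3 + \mathcal{L}_\varepsilon\Psi^{(1)}_\varepsilon + \mathcal{L}_\varepsilon\Psi^{(2)}_\varepsilon ,
\]
where $T_1, T_2, T_3$ denote the first three divergence-form terms on the right-hand side of \eqref{lem41}. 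Since $\phi$ vanishes to order $m$ on $\partial\Omega$, integration by parts produces no boundary contribution, and the LHS of \eqref{le32re2} becomes a sum of integrals of the shape $\int(\cdot)\,D^\alpha\phi\,dxdt$.

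I would then estimate the five groups of integrands separately. From $T_1$ arises $\int(A^{\alpha\beta}_\varepsilon - \bar A^{\alpha\beta})(D^\beta u_0 - K_\varepsilon(D^\beta u_0))\,D^\alpha\phi$; splitting $\Omega_T = (\Omega_T\setminus\Omega_{T,8\varepsilon})\cup\Omega_{T,8\varepsilon}$, the bulk contribution is handled by Lemma \ref{le2.3} part \eqref{le23re3} with $\ell=m$, yielding a bound of order $\varepsilon(\|u_0\|_{L^2(H^{m+1})}+\|\partial_t u_0\|_{L^2(H^{-m+1})})\,\|\nabla^m\phi\|_{L^2(\Omega_T)}$, while the layer contribution is dealt with by a boundary-strip estimate on $D^\beta u_0$ supplying $\sqrt{\varepsilon}$, paired with $\sqrt{\varepsilon}\|\nabla^m\phi\|_{L^2(\Omega_{T,8\varepsilon})}$, the time-boundary strips where $\varrho_\varepsilon=0$ being absorbed by the $\sup_t$ term on the RHS. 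For $T_2$, $T_3$, $\mathcal{L}_\varepsilon\Psi^{(1)}_\varepsilon$ and $\mathcal{L}_\varepsilon\Psi^{(2)}_\varepsilon$, each integrand is a product of a rapidly oscillating periodic factor (derivatives of $\chi$, $\mathfrak{B}$ or $\mathcal{B}$), a smoothed factor coming from $S^2_\varepsilon(D^\beta u_0)$ or its derivatives, the cutoff $\rho_\varepsilon\varrho_\varepsilon$ or its derivatives, and $D^\alpha\phi$. The periodic version of Lemma \ref{le2.2} controls products of the first three factors in $L^2$; the explicit prefactors $\varepsilon^{|\eta|}$, $\varepsilon^{2m}$, $\varepsilon^{m+|\eta|}$ exactly balance the $\varepsilon^{-|\cdot|}$ that accumulates when spatial derivatives fall on a $(\cdot)_\varepsilon$ or on $S^2_\varepsilon$. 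Contributions where derivatives fall on $\rho_\varepsilon$ or $\varrho_\varepsilon$ are automatically supported in $\Omega_{T,8\varepsilon}$ and merge into the boundary-layer term of the claim.

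The time-derivative term $T_3$ requires one further ingredient: away from the time endpoints, $\varepsilon^{2m}\partial_t S^2_\varepsilon(D^\beta u_0)=S^2_\varepsilon(\varepsilon^{2m}\partial_t D^\beta u_0)$, and $\partial_t u_0 = f - \mathcal{L}_0 u_0$ belongs to $L^2(0,T;H^{-m+1}(\Omega))$ by the equation, so the mapping properties of $S_\varepsilon$ give $\|\varepsilon^{2m}\partial_t S^2_\varepsilon(D^\beta u_0)\|_{L^2(\Omega_T)} \lesssim \varepsilon\,\|\partial_t u_0\|_{L^2(H^{-m+1})}$; the portion where $\partial_t$ lands on $\varrho_\varepsilon$ is supported in a time strip of width $\sim\varepsilon^{2m}$ and is exactly what forces the $\sup_t\bigl(\varepsilon^{-1}\int_{t-10\varepsilon^{2m}}^{t}\|\nabla^m u_0\|_{L^2(\Omega)}^2\bigr)^{1/2}$ factor on the RHS. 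The main technical obstacle I anticipate is the bookkeeping: a large number of Leibniz cross terms appear (especially from $\mathcal{L}_\varepsilon\Psi^{(1)}_\varepsilon$, where up to $(m-1)+m=2m-1$ derivatives can pile on $\mathfrak{B}^{\gamma(d+1)\beta}$), and each must be routed either into the bulk class $\varepsilon\|\nabla^m\phi\|_{L^2(\Omega_T)}$ or into the layer class $\sqrt{\varepsilon}\|\nabla^m\phi\|_{L^2(\Omega_{T,8\varepsilon})}$. The analytic input that is new compared with the second-order case $m=1$ of \cite{gsjfa2017} is precisely the $H^{2m}$-regularity \eqref{le21re2}$_2$ of $\mathfrak{B}^{\gamma(d+1)\beta}$, built into Lemma \ref{le2.1} so that these higher-order Leibniz products close in $L^2$.
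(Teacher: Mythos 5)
Your proposal is correct and follows essentially the same route as the paper: the same algebraic observation that adding the $\mathfrak{B}^{\gamma(d+1)\beta}$ and $\mathcal{B}^{\alpha\beta}$ corrections to $\varpi_\varepsilon$ cancels the two $\partial_t\{\cdots\}$ terms in \eqref{lem41} at the cost of $\mathcal{L}_\varepsilon$ acting on them, followed by testing with $\phi$, estimating the oscillating products via Lemma \ref{le2.2}, the bulk approximation error via \eqref{le23re3}, and the boundary/time layers via the $O(\sqrt{\varepsilon})$ strip estimate and the $\sup_t$ term. No gaps beyond routine bookkeeping that the paper itself carries out.
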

\begin{proof}
  According to \eqref{lem41}, we have
\begin{align*}
(\pa_t+\mc{L}_\va ) w_\va&
=(-1)^{m+1}\sum_{|\al|=|\be|=m}  D^\al\Big\{\big(A_\va^{\alpha \beta} -\bar{A}^{\alpha \beta}\big)
 \big(D^\beta u_0-K_\varepsilon(D^\beta u_0)\big)\Big\}\nonumber\\
& \quad+ (-1)^{m+1}\sum_{\substack{|\alpha|=|\beta|=|\gamma|=m\\ \zeta+\eta =\gamma\\  0\leq|\zeta|\leq m-1}}
  C(\gamma, \zeta)\varepsilon^{|\eta|}  D^\alpha  \Big\{\Big(A_\va^{\alpha \gamma}   (D^{\zeta}\chi^\beta)_\varepsilon-(D^\zeta\mathfrak{B}^{\gamma \alpha \beta})_\varepsilon\Big)D^\eta K_\varepsilon(D^\beta u_0) \Big\}\nonumber\\
  &\quad-\sum_{|\alpha|=|\xi|=m}D^\alpha\Big\{A^{\alpha\xi}_\varepsilon D^\xi\Big[ \sum_{\substack{|\beta|=|\gamma|=m\\ \zeta+\eta =\gamma\\0\leq|\zeta|\leq m-1}}\varepsilon^{m+|\eta|}(D^\zeta\mathfrak{B}^{\gamma (d+1) \beta})_\varepsilon D^\eta K_\varepsilon(D^\beta u_0)\Big]\Big\} \nonumber\\
  &\quad +\sum_{|\alpha|=|\xi|=m}D^\alpha\Big\{A_\varepsilon^{\alpha\xi}D^\xi\Big(\varepsilon^{2m}\sum_{|\gamma|=|\beta|=m}\mathcal{B}_\varepsilon^{\gamma\beta}D^\gamma K_\varepsilon(D^\beta u_0)\Big)\Big\}\nonumber\\ &\quad+(-1)^m\varepsilon^{2m}\sum_{|\al|=|\be|=m}D^\alpha\Big\{\Big(\mathfrak{B}_\varepsilon^{(d+1)\alpha\beta}+\mathcal{B}_\varepsilon^{\alpha\beta}\Big)\partial_tK_\varepsilon(D^\beta u_0) \Big\},
\end{align*}
which yields that for any $\phi \in L^2(0, T; H^m_0(\Omega))$
\begin{align}
  &  \int_0^T \langle \pa_t w_\va, \phi\rangle  dt+\sum_{|\alpha|=|\beta|=m}  \int_{\Om_T}
A_\va^{\alpha\beta}   D^\beta w_\varepsilon D^\alpha \phi\nonumber\\
 &= - \sum_{|\alpha|=|\beta|=m}\int_{\Om_T}
 \big(A_\va^{\alpha \beta} -\bar{A}^{\alpha \beta}\big)
 \big(D^\beta u_0-K_\varepsilon(D^\beta u_0)\big) D^\alpha \phi\nonumber\\
&\ \ \ - \sum_{\substack{|\alpha|=|\beta|=|\gamma|=m\\ \zeta+\eta =\gamma\\  0\leq|\zeta|\leq m-1}}\varepsilon^{|\eta|}
C(\gamma, \zeta)  \int_{\Om_T}\Big(A_\va^{\alpha \gamma}   (D^{\zeta}\chi^\beta)_\varepsilon-(D^\zeta\mathfrak{B}^{\gamma \alpha \beta})_\varepsilon\Big)D^\eta K_\varepsilon(D^\beta u_0) D^\alpha \phi \nonumber\\
&\quad+(-1)^{m+1}\sum_{\substack{|\alpha|=|\xi|=|\beta|=|\gamma|=m\\ \zeta+\eta =\gamma+\xi\\0\leq|\zeta|\leq 2m-1}}\varepsilon^{|\eta|} C(\zeta)\int_{\Om_T}A^{\alpha\xi}_\varepsilon(D^\zeta\mathfrak{B}^{\gamma (d+1) \beta})_\varepsilon D^\eta K_\varepsilon(D^\beta u_0)D^\al \phi\label{le31re1}\\
  &\quad +(-1)^m\varepsilon^{2m}\sum_{|\alpha|=|\xi|=|\beta|=|\gamma|=m}\int_{\Om_T}A_\varepsilon^{\alpha\xi}\mathcal{B}_\varepsilon^{\gamma\beta}D^\xi D^\gamma K_\varepsilon(D^\beta u_0)D^\alpha\phi\nonumber\\
&\ \ \ + \va^{2m} \sum_{|\al|=|\be|=m} \int_{\Om_T} \Big(\mathfrak{B}_\varepsilon^{(d+1)\alpha\beta}+\mathcal{B}_\varepsilon^{\alpha\beta}\Big)\partial_tK_\varepsilon(D^\beta u_0) D^\al \phi.\nonumber
\end{align}
  Denote the terms in the r.h.s of (\ref{le31re1}) as $I_i$, $i=1, \dots, 5$, in turn.
Note that
\begin{align*}
 & \big| D^\beta u_0-K_\varepsilon(D^\beta u_0)\big|\\
  &\leq \big|\big[D^\beta u_0- S_\varepsilon(D^\beta u_0)\big]\rho_\varepsilon \varrho_\va \big|+\big|\big[S_\varepsilon(D^\beta u_0) - S^2_\varepsilon(D^\beta u_0)\big] \rho_\varepsilon \varrho_\va\big|+\big| D^\beta u_0   (1- \rho_\varepsilon \varrho_\va)\big|,
\end{align*}
which implies that
\begin{align*}
|I_1| &\leq C   \| \nabla^m \phi\|_{L^2(\Omega_T)} \| \na^{m} u_0-S_\varepsilon(\na^m u_0)\|_{L^{2}(\Omega_T\setminus \Omega_{T,4\va})}
 +C\| \nabla^m \phi\|_{L^2(\Omega_{T,8\va})} \| \nabla^m  u_0 \|_{L^2(\Omega_{T, 8\varepsilon})}.
 \end{align*}
To deal with $I_2,$ we observe that for $|\eta|\geq1$,
\begin{align}
  D^\eta K_\varepsilon(D^\beta u_0)=&\sum_{\eta'+\eta''=\eta} C(\eta,\eta')D^{\eta'}S^2_\varepsilon(D^\beta u_0)D^{\eta''}\rho_\varepsilon\varrho_\varepsilon\nonumber\\
 =&\sum_{\substack{\eta'+\eta''=\eta\\1\leq |\eta''| }} C(\eta,\eta')D^{\eta'}S^2_\varepsilon(D^\beta u_0)D^{\eta''}\rho_\varepsilon\varrho_\varepsilon+S^2_\varepsilon(D^\beta D^{\eta} u_0)\rho_\varepsilon\varrho_\varepsilon,
\end{align}
which implies by Lemma \ref{le2.2} that
\begin{align}\label{esj2}
|I_2 | &\leq C  \sum_{1\leq k\leq m}\va^{k }  \| \nabla^m \phi\|_{L^2(\Omega_T)} \|  S_\va(\nabla^{m+k}  u_0)\|_{L^2(\Omega_T\setminus \Omega_{T, 4\va})}\nonumber\\
&\quad+ C   \sum_{ 0\leq k \leq m-1 }\va^{k } \| \nabla^m \phi\|_{L^2(\Omega_{T, 8\va})} \| S_\va(\nabla^{m+k} u_0 )\|_{L^2(\Omega_{T, 9\va}\setminus \Omega_{T,4\va})},
\end{align}
where $C$ depends only on $d, n, m, \mu, T$ and $\Omega$.
In a similar manner, we also have
\begin{align}\label{esj3}
|I_3 |+|I_4| &\leq C  \sum_{1\leq k\leq 2m}\va^{k }  \| \nabla^m \phi\|_{L^2(\Omega_T)} \|  S_\va(\nabla^{m+k}  u_0)\|_{L^2(\Omega_T\setminus \Omega_{T, 4\va})}\nonumber\\
&\quad+ C   \sum_{ 0\leq k \leq 2m-1 }\va^{k } \| \nabla^m \phi\|_{L^2(\Omega_{T, 8\va})} \| S_\va(\nabla^{m+k} u_0 )\|_{L^2(\Omega_{T, 9\va}\setminus \Omega_{T,4\va})},
\end{align}
where $C$ depends only on $d, n, m, \mu, T$ and $\Omega$.

Finally,  by Lemma \ref{le2.2}  it is not difficult to find that
\begin{align}\label{esj5}
|I_5|
&\leq C \va^{2m}  \| \nabla^m \phi\|_{L^2(\Omega_T)} \|   S_\varepsilon(\na^m\pa_tu_0 )\|_{L^2(\Om_T\setminus \Om_{T,4\va})}\nonumber\\
& \quad +C \| \nabla^m \phi\|_{L^2(\Omega_{T,8\va})} \|   S_\varepsilon(\na^m u_0 )\|_{L^2(\Omega_{T,9\va}\setminus \Om_{T,4\va})}.
\end{align}
Taking the estimates for $I_1$--$I_5$ into (\ref{le31re1}), we obtain that \begin{equation}
\label{le32re1}
\aligned
 & \Big| \int_0^T \big\langle \pa_t w_\va, \phi\big\rangle_{H^{-m}(\Om)\times H_0^m(\Om)} dt +  \sum_{|\alpha|=|\beta|=m} \int_{\Om_T} D^\alpha \phi   A_\va^{\alpha\beta}
 D^\beta w_{\varepsilon}dxdt\Big| \\
&\leq C\| \nabla^m \phi\|_{L^2(\Omega_{T,8\va})} \| \nabla^m  u_0 \|_{L^2(\Omega_{T, 8\varepsilon})}
 +C   \| \nabla^m \phi\|_{L^2(\Omega_T)} \| \na^{m} u_0-S_\varepsilon(\na^m u_0)\|_{L^{2}(\Omega_{T}\setminus\Omega_{T,4\va})}\\
 &\quad +C  \sum_{0\leq k\leq 2m-1} \va^{k}  \| \nabla^m \phi\|_{L^2(\Om_{T,8\va})} \|S_\varepsilon(\na^{m+k}u_0)\|_{L^2(\Om_{T,9\va}\setminus \Om_{T,4\va})} \\
&\quad  + C \sum_{1\leq k\leq  2m} \va^k  \| \nabla^m \phi\|_{L^2(\Omega_T)} \|S_\varepsilon(\na^{m+k}u_0)\|_{L^2(\Om_T\setminus \Om_{T,4\va})}\\
&\quad+C \va^{2m}  \| \nabla^m \phi\|_{L^2(\Omega_T)} \|   S_\varepsilon(\na^m\pa_tu_0 )\|_{L^2(\Om_T\setminus \Om_{T,4\va})},\\
\endaligned
\end{equation}
where $C$ depends only on $d, n, m, \mu, T$ and $\Omega.$

Denote the  terms in the r.h.s. of (\ref{le32re1}) as $ J_1, J_2, J_3, J_4, J_5$ in turn.
Similar to \cite[p.664]{shenan2017}, we may prove that
\begin{align*}
\| \nabla^m  u_0(t)\|_{L^2(\Omega_{\varepsilon})}\leq C \va^{1/2}   \|u_0(t)\|_{H^{m+1}(\Om)}\quad\text{for } a.e.\,  t\in(0,T).
\end{align*}
This leads to the following estimate
\begin{align}\label{ple3201}
\| \nabla^m  u_0(t)\|^2_{L^2(\Omega_{T,8\va})}&\leq   \int_0^T \|\nabla^m  u_0(t)\|^2_{L^2(\Om_{8\varepsilon})}dt + \Big(\int_0^{8\va^{2m}} +  \int^T_{T-8\va^{2m}} \Big) \|\nabla^m  u_0(t)\|^2_{L^2(\Omega)} dt\nonumber\\
&\leq C \va  \Big\{\int_0^T\|u_0(t)\|^2_{H^{m+1}(\Om)}dt+  \sup_{  8\va^{2m}< t< T}\frac{1}{\va}\int^t_{t-8\va^{2m}} \| \nabla^m  u_0(t) \|^2_{L^2(\Om)}  dt \Big\},
\end{align}
which implies that
\begin{align}\label{ple3202}
J_1&\leq C \va^{1/2} \| \nabla^m \phi\|_{L^2(\Omega_{T,8\va})} \Big\{\Big(\int_0^T\|u_0(t)\|^2_{H^{m+1}(\Om)}dt\Big)^{1/2}\nonumber\\ & \quad\quad\quad\quad\quad\quad\quad\quad\quad\quad\quad\quad +   \sup_{8\va^{2m}< t< T}\Big(\frac{1}{\va}\int^t_{t-8\va^{2m}} \| \nabla^m  u_0(t)\|^2_{L^2(\Om)}dt \Big)^{1/2}   \Big\}.
\end{align}
By  (\ref{le23re3}),  we obtain that
\begin{align} \label{ple3203}
 J_2  
 &\leq  C \va \| \nabla^m \phi\|_{L^2(\Omega_T)} \Big\{ \|\pa_t  u_0\|_{L^2(0,T; H^{-m+1}(\Om))} +\|\na^{m+1}u_0\|_{L^2(\Om_T)} \Big\},
\end{align} where the fact $\|\pa_t  u_0\|_{L^2(0,T; H^{-m+1}(\Om\setminus\Om_\va ))}\leq \|\pa_t  u_0\|_{L^2(0,T; H^{-m+1}(\Om ))} $ is used.
By (\ref{le23re2}) and (\ref{ple237}),  we have
\begin{align*}
&\|S_\varepsilon(\na^{m+k}u_0)\|_{L^2(\Om_{T,9\va}\setminus \Om_{T,4\va})} \leq C \va^{-k} \| \nabla^m  u_0 \|_{L^2(\Omega_{T, 10\varepsilon})},  \\ &\|S_\varepsilon(\na^{m+k}u_0)\|_{L^2(\Om_T\setminus \Om_{T,4\va})}\leq  C \va^{-k+1} \| \nabla^{m+1}  u_0 \|_{L^2(\Omega_T)},\\
&\|S_\varepsilon(\na^m\pa_t u_0)\|_{L^2(\Om_T\setminus \Om_{T,4\va})}\leq  C \va^{-2m+1} \| \pa_t  u_0 \|_{L^2(0,T; H^{-m+1}(\Omega))}.
\end{align*}
This implies that
\begin{align} \label{ple3204}
\begin{split}
 J_3  &\leq C \va^{1/2} \| \nabla^m \phi\|_{L^2(\Omega_{T,8\va})} \Big\{\Big(\int_0^T\|u_0(t)\|^2_{H^{m+1}(\Om)}dt\Big)^{1/2} \\ &\quad\quad\quad\quad\quad\quad\quad\quad\quad\quad\quad+   \sup_{10\va^{2m}< t< T}\Big(\frac{1}{\va}\int^t_{t-10\va^{2m}}\! \| \nabla^m  u_0(t)\|^2_{L^2(\Om)} dt\Big)^{1/2}\Big\},\\
  J_4  &\leq   C \va  \| \nabla^m \phi\|_{L^2(\Omega_{T})}   \|\na^{m+1}u_0\|_{L^2(\Om_T)} ,\\
  J_5  &\leq   C \va  \| \nabla^m \phi\|_{L^2(\Omega_{T})}  \| \pa_t  u_0 \|_{L^2(0,T; H^{-m+1}(\Omega))} .
 \end{split}
\end{align}
Note that  (\ref{le32re2}) follows directly from (\ref{le32re1}) and (\ref{ple3202})--(\ref{ple3204}). The proof is complete.
\end{proof}


Now we are in the position to establish the error estimate in $L^2(0, T; H^{m}(\Om))$.

\begin{theorem}\label{t3.1}
Let $\Omega$ be a bounded Lipschitz domain in $\mathbb{R}^d$, and $A$ satisfy conditions (\ref{cod1}) and (\ref{cod2}). Let $u_\varepsilon, u_0$ be weak solutions to initial-Dirichlet problems (\ref{eq1}) and (\ref{hoeq1}), respectively, and moreover $u_0\in L^2(0, T; H^{m+1}(\Om))$. Let $w_\varepsilon(x,t)$ be defined by \eqref{def_w}. Then \begin{align}\label{co31re1}
\|\na^m w_\va\|_{L^2(\Om_T)}\leq C \va^{1/2} \Big\{\|u_0\|_{L^2(0, T; H^{m+1}(\Om))}+ \|f\|_{L^2(0, T; H^{-m+1}(\Om))} +\|h\|_{L^{2}(\Om)}  \Big\}.
\end{align}
\end{theorem}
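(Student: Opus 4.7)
The plan is to test the identity of Lemma \ref{le3.2} with $\phi = w_\varepsilon$ itself, exploit parabolic coercivity together with integration by parts in $t$, and close by the crude domination $\Omega_{T, 8\varepsilon} \subset \Omega_T$ followed by a division.

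First I verify that $w_\varepsilon$ is an admissible test function in \eqref{le32re2}. Every corrector piece in \eqref{def_w} is multiplied by (a derivative of) $S_\varepsilon^2(D^\beta u_0)\rho_\varepsilon \varrho_\varepsilon$, which vanishes on an $O(\varepsilon)$-neighborhood of $\partial\Omega\times(0,T)$ and on $O(\varepsilon^{2m})$-neighborhoods of $t=0$ and $t=T$. Since $u_\varepsilon$ and $u_0$ share homogeneous Dirichlet data of order $m-1$ and the common initial datum $h$, we have $w_\varepsilon \in L^2(0,T; H^m_0(\Omega))$ with $w_\varepsilon(\cdot,0) = 0$. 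Substituting $\phi = w_\varepsilon$ into \eqref{le32re2}, the temporal pairing integrates to $\tfrac{1}{2}\|w_\varepsilon(T)\|^2_{L^2(\Omega)} \ge 0$, while \eqref{cod1} gives
\[
\sum_{|\alpha|=|\beta|=m}\int_{\Omega_T} A_\varepsilon^{\alpha\beta} D^\beta w_\varepsilon\, D^\alpha w_\varepsilon \,dxdt \;\ge\; \mu \|\nabla^m w_\varepsilon\|_{L^2(\Omega_T)}^2.
\]
Both contributions being nonnegative, the absolute value on the left-hand side of \eqref{le32re2} is at least $\mu\|\nabla^m w_\varepsilon\|_{L^2(\Omega_T)}^2$. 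Denoting the first factor of the right-hand side of \eqref{le32re2} by $\mathcal{A}$ and using $\Omega_{T,8\varepsilon}\subset\Omega_T$, the second factor is bounded by $(\varepsilon + \varepsilon^{1/2})\|\nabla^m w_\varepsilon\|_{L^2(\Omega_T)} \le 2\varepsilon^{1/2}\|\nabla^m w_\varepsilon\|_{L^2(\Omega_T)}$ for $\varepsilon \le 1$. Dividing both sides by $\|\nabla^m w_\varepsilon\|_{L^2(\Omega_T)}$ (the inequality is trivial if this vanishes) yields
\[
\|\nabla^m w_\varepsilon\|_{L^2(\Omega_T)} \;\le\; C\varepsilon^{1/2}\, \mathcal{A}.
\]

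It remains to show $\mathcal{A} \le C(\|u_0\|_{L^2(H^{m+1})} + \|f\|_{L^2(H^{-m+1})} + \|h\|_{L^2})$. The $\|u_0\|_{L^2(H^{m+1})}$ term is immediate; for $\|\partial_t u_0\|_{L^2(H^{-m+1})}$ I read $\partial_t u_0 = f - \mathcal{L}_0 u_0$ off \eqref{hoeq1} and use boundedness of $\mathcal{L}_0 : H^{m+1} \to H^{-m+1}$, obtaining $\|\partial_t u_0\|_{L^2(H^{-m+1})} \le \|f\|_{L^2(H^{-m+1})} + C\|u_0\|_{L^2(H^{m+1})}$. For the supremum term in $\mathcal{A}$, I invoke the Gelfand-triple embedding
\[
L^2(0,T; H^{m+1}(\Omega)) \cap H^1(0,T; H^{-m+1}(\Omega)) \;\hookrightarrow\; C([0,T]; H^1(\Omega))
\]
and interpolate with $L^2(H^{m+1})$ to obtain $u_0 \in L^{2m/(m-1)}(0,T; H^m(\Omega))$ (interpreted as $L^\infty(0,T; H^1(\Omega))$ when $m = 1$). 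H\"{o}lder's inequality then yields
\[
\frac{1}{\varepsilon}\int_{t-10\varepsilon^{2m}}^t \|\nabla^m u_0(s)\|_{L^2(\Omega)}^2 \,ds \;\le\; C\varepsilon\,\|\nabla^m u_0\|_{L^{2m/(m-1)}(0,T; L^2(\Omega))}^2,
\]
uniformly in $t$ and $\varepsilon$, and the right-hand side is controlled by the data norms via the standard parabolic energy bound $\|u_0\|_{L^\infty(L^2) \cap L^2(H^m)} \le C(\|f\|_{L^2(H^{-m})} + \|h\|_{L^2})$. Combining the three contributions bounds $\mathcal{A}$ as required and gives \eqref{co31re1}.

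I expect the main obstacle to be the control of the supremum term in $\mathcal{A}$, since unlike the other two summands it is not directly furnished by the assumed data norms. The interpolation and H\"{o}lder argument sketched above is the crux; careful bookkeeping is needed to ensure that for $m \ge 2$ the space-time regularity of $u_0$ inherited from $u_0 \in L^2(H^{m+1})$ and from the homogenized equation is strong enough to render the bound uniform in $\varepsilon$. The remaining structural steps---verifying admissibility of $w_\varepsilon$, invoking coercivity, and dividing out the common factor of $\|\nabla^m w_\varepsilon\|_{L^2(\Omega_T)}$---are essentially routine once Lemma \ref{le3.2} is in hand.
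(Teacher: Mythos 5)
Your overall skeleton is exactly the paper's: take $\phi=w_\varepsilon$ in Lemma \ref{le3.2}, use $w_\varepsilon(\cdot,0)=0$ to discard the temporal term, use \eqref{cod1} for coercivity, bound $\Omega_{T,8\varepsilon}$ by $\Omega_T$, absorb $\|\nabla^m w_\varepsilon\|_{L^2(\Omega_T)}$, and then reduce everything to bounding the bracket $\mathcal{A}$; the treatment of $\|\partial_t u_0\|_{L^2(H^{-m+1})}$ via the homogenized equation is also identical to \eqref{pco314}. Where you genuinely diverge is the supremum term. The paper stays elementary: it tests the homogenized equation \eqref{hoeq1} with $u_0$ over the short window $(t-10\varepsilon^{2m},t)$, pairs $-\partial_tu_0+f$ with $u_0$ in $H^{-m+1}\times H^{m-1}$ duality (see \eqref{pco312}), and then uses the $\varepsilon$-weighted Gagliardo--Nirenberg bound $\|u\|^2_{H^{m-1}}\le C\varepsilon^2\|u\|^2_{H^m}+C\varepsilon^{-2m+2}\|u\|^2_{L^2}$ together with the basic energy estimate \eqref{pco313}; this is self-contained and is also where the $\|h\|_{L^2}$ term enters. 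You instead go through space--time interpolation: $u_0\in L^2(H^{m+1})$, $\partial_tu_0\in L^2(H^{-m+1})$ implies $u_0\in C([0,T];H^1)$, then interpolate with $L^2(H^{m+1})$ to get $L^{2m/(m-1)}(0,T;H^m)$ and finish by H\"older in time. The arithmetic of the H\"older/interpolation steps is right (and even gives an extra $\varepsilon^{1/2}$), but two points need repair. First, the embedding you invoke is not the standard Gelfand-triple result, since $H^{-m+1}(\Omega)=(H_0^{m-1}(\Omega))^*$ is not the dual of $H^{m+1}(\Omega)$ with respect to the $H^1$ pivot; what you actually need is the Lions--Magenes intermediate-derivative/trace theorem for the couple $\big(H^{m+1}(\Omega),H^{-m+1}(\Omega)\big)$ together with the identification $[H^{-m+1}(\Omega),H^{m+1}(\Omega)]_{1/2}=H^1(\Omega)$, and on a merely Lipschitz domain (which is all Theorem \ref{t3.1} assumes) that identification is a nontrivial fact requiring, e.g., a universal extension operator for the full Sobolev scale; as written this is asserted, not proved. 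Second, your closing sentence attributes the control of $\|\nabla^m u_0\|_{L^{2m/(m-1)}(0,T;L^2)}$ to the energy bound $\|u_0\|_{L^\infty(L^2)\cap L^2(H^m)}\le C(\|f\|+\|h\|)$, which does not give an $L^{2m/(m-1)}$-in-time bound; the control must come from the quantitative form of the embedding, $\|u_0\|_{L^\infty(H^1)}\le C\big(\|u_0\|_{L^2(H^{m+1})}+\|\partial_tu_0\|_{L^2(H^{-m+1})}\big)$, combined with your $\partial_t u_0$ estimate. With those two fixes your route closes, but the paper's short-window energy argument achieves the same bound with only the equation, Cauchy--Schwarz and Gagliardo--Nirenberg, and avoids the interpolation machinery altogether.
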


\begin{proof}
Note that $w_\va\in L^2(0,T; H^m_0(\Om))$. Taking $\phi=w_\va$ in (\ref{le32re2}), it yields
 \begin{align}\label{pco311}
\|\na^m w_\va\|_{L^2(\Om_T)}\leq C \va^{1/2}\Big\{&\|u_0\|_{L^2(0, T; H^{m+1}(\Om))}+ \|\pa_t  u_0\|_{L^2(0, T; H^{-m+1}(\Om))} \nonumber\\
& + \sup_{10\va^{2m}< t< T}\Big(\frac{1}{\va}\int^t_{t-10\va^{2m}} \|\nabla^m  u_0(t) \|_{L^2(\Om)}^2 dt \Big)^{1/2} \Big\} .
\end{align}
Since
\begin{align*}
\int_{t_1}^{t_2}\int_{\Om} \bar{A}^{\al\be}D^\al u_0 D^\be u_0 =\int_{t_1}^{t_2}\langle -\pa_tu_0+f, u_0\rangle dt\quad \text{ for } \,  0  \leq t_1, t_2\leq T.
\end{align*}
By (\ref{cod1}), we have
\begin{align}\label{pco312}
&\int_{t-10\va^{2m}}^t\|\na^m u_0(t)\|_{L^2(\Om)}^2dt
 \leq C \Big( \int_0^T \|\pa_t u_0+f(t)\|^2_{H^{-m+1}(\Om)} dt\Big)^{1/2} \Big( \int_{t-10\va^{2m}}^t\|u_0(t)\|^2_{H^{m-1}(\Om))}dt\Big)^{1/2} .
\end{align}By Gagliardo-Nirenberg inequality and Young's inequality, we have
 $$\|u\|^2_{H^{m-1}(\Om)}\leq C \|u\|^{(2m-2)/m}_{H^m(\Om)} \|u\|^{2/m}_{L^2(\Om)} \leq C\va^2 \|u\|^2_{H^m(\Om)}+C\va^{-2m+2}\|u\|^2_{L^2(\Om)}.$$ We therefore  obtain that
\begin{align}\label{pco313}
\Big(\int_{t-10\va^{2m}}^t\|u_0(t)\|^2_{H^{m-1}(\Om))}dt\Big)^{1/2} &\leq C \va \Big\{ \|u_0\|_{L^2(0,T; H^{m}(\Om))}+ \|u_0\|_{L^\infty(0,T; L^{2}(\Om))} \Big\} \nonumber\\
&\leq C \va \Big\{  \|f\|_{L^2(0,T; H^{-m+1}(\Om))} +\|h\|_{L^2(\Om)} \Big\}.
\end{align}
On the other hand, from the equation of $u_0$ we note that
\begin{align}\label{pco314}
 \|\pa_t u_0\|_{L^2(0,T; H^{-m+1}(\Om))} \leq C \Big\{  \|u_0\|_{L^2(0,T; H^{m+1}(\Om))} +\|f\|_{L^2(0,T; H^{-m+1}(\Om))} \Big \}.
\end{align}
We then conclude from (\ref{pco312})--(\ref{pco314}) that
\begin{align}\label{pco315}
&\sup_{10\va^{2m}< t< T}\Big(\frac{1}{\va}\int^t_{t-10\va^{2m}}  \| \nabla^m  u_0(t)\|_{L^2(\Om)}^2 dt \Big)^{1/2}\nonumber\\
&\quad\quad\quad\quad\quad\quad\quad\quad\leq C \Big\{\|u_0\|_{L^2(0,T; H^{m+1}(\Om))} + \|f\|_{L^2(0,T; H^{-m+1}(\Om))} +\|h\|_{L^2(\Om)}  \Big\}.
\end{align}
This, combined with (\ref{pco311}) and \eqref{pco314}, gives (\ref{co31re1}).
\end{proof}
\begin{remark}
In addition to the assumptions of Theorem \ref{t3.1}, if $\Om$ is  $C^{m,1}$  and $h=0$, then as a consequence of (\ref{co31re1}) we have
\begin{align}\label{co31re2}
\|\na^m w_\va\|_{L^2(\Om_T)}\leq C \va^{1/2}  \|f\|_{L^2(0,T; H^{-m+1}(\Om))}.
\end{align}
This follows from the estimate
\begin{align}\label{pco316}
 \|u_0\|_{L^2(0,T; H^{m+1}(\Om))}\leq C \|f\|_{L^2(0,T; H^{-m+1}(\Om))},
 \end{align}
 which may be proved by time discretization and reducing the estimate to the well-known $H^{m+1}$ estimate for $2m$-order elliptic systems with constant coefficients in $C^{m,1}$ domains.
\end{remark}

\subsection{$O(\va)$ error estimate in $L^2(0,T; H^{m-1}(\Om))$}

With preparations in the last sections, we are now ready to prove Theorems 1.1.
Let $\mathcal{L}_\va^*,  \mathcal{L}_0^*$ be the adjoint operators of $\mc{L}_\va$ and $\mc{L}_0$, respectively. Let $\Om$ be a bounded $C^{m,1}$ domain and $F\in L^2(0,T; H^{-m+1}(\Om)).$  Suppose that $v_\va, v_0$ are, respectively, weak solutions to
\begin{equation} \label{deq1}
 \begin{cases}
 -\pa_tv_\va+\mathcal{L}^*_\varepsilon v_\varepsilon =F  &\text{ in } \Omega_T,   \\
 Tr (D^\gamma v_\varepsilon)=0  & \text{ on } \partial\Omega \times(0,T),\, 0\leq|\gamma|\leq m-1,  \\
 v_\va=0 & \text{ on } \Omega \times \{t=T\},
\end{cases}
\end{equation}
and
\begin{equation} \label{dhoeq1}
 \begin{cases}
 -\pa_tv_0+\mathcal{L}^*_0 v_0 =F  &\text{ in } \Omega_T,   \\
 Tr (D^\gamma v_0)=0  & \text{ on } \partial\Omega \times(0,T),\, 0\leq|\gamma|\leq m-1,  \\
 v_0=0 & \text{ on } \Omega \times \{t=T\}.
\end{cases}
\end{equation}
Then it is not difficult to find that $v_\va(x,T-t), v_0(x,T-t)$ are solutions to problem (\ref{eq1}) (with homogeneous Dirichlet boundary data) and problem (\ref{hoeq1}), respectively, with $f(x,t)=F(x,T-t), h=0$ as well as the coefficient matrix $A(x/\va, t/\va^{2m})$ replaced by $A^*(x/\va, (T-t)/\va^{2m})$.
Note that $A^*(y, T-s)$ satisfies the conditions (\ref{cod1}) and (\ref{cod2}) as $A(y,s).$
Similar to Section 2, we can introduce the matrix of correctors $\chi_{T,\va}^{*} $ and flux correctors $\mathfrak{B}_{T,\va}^{*}(y,s)$ and also $ \mathcal{B}_{T, \varepsilon}^{*}$ for the family of parabolic operators $$-\pa_t+(-1)^m \sum_{|\alpha|=|\beta|=m}  D^\alpha \Big\{A^{*\alpha \beta}(x/\va,(T-t)/\va^{2m})D^\beta  \Big\}, \quad\va>0.$$

 \begin{proof}[\bf Proof of Theorem \ref{tcd}]
We now use the duality argument \cite{Suslina2017-N,gsjfa2017}  to prove Theorem \ref{tcd}. For simplicity, we assume that
$$  \|u_0\|_{L^2(0,T; H^{m+1}(\Om))} + \|f\|_{L^2(0,T; H^{-m+1}(\Om))}+\|h\|_{L^2(\Om)}  \leq 1.$$
In view of the definition of $w_\varepsilon$, to prove (\ref{tcdre1}) it is sufficient to prove the following estimates,
 \begin{align}
   &\varepsilon^m\big\|\sum_{|\gamma|=m}\chi^\gamma_\va K_\varepsilon(D^\gamma u_0 )\big\|_{L^2(0,T; H_0^{m-1}(\Om))}\leq C \va, \label{ptcd11}\\
   &\Big\|\sum_{\substack{|\beta|=|\gamma|=m\\ \zeta+\eta =\gamma\\0\leq|\zeta|\leq m-1}}\varepsilon^{m+|\eta|}(D^\zeta\mathfrak{B}^{\gamma (d+1) \beta})_\varepsilon D^\eta K_\varepsilon(D^\beta u_0)\Big\|_{L^2(0,T; H_0^{m-1}(\Om))}\leq C \va,\label{ptcd12}\\
&\varepsilon^{2m}\Big\|\sum_{|\alpha|=|\beta|=m}\mathcal{B}_\varepsilon^{\alpha\beta}D^\alpha K_\varepsilon(D^\beta u_0)\Big\|_{L^2(0,T; H_0^{m-1}(\Om))}\leq C \va,\label{ptcd13}\\
&\big\|w_\varepsilon\big\|_{L^2(0,T; H_0^{m-1}(\Om))}  \leq C \varepsilon .\label{ptcd1_w}
\end{align}
Thanks to the Poincar\'{e} inequality and Lemmas \ref{le2.2}, \ref{le2.3}, we deduce that
\begin{align}\label{ptcd2}
 &\big\|\sum_{|\gamma|=m}\chi^\gamma_\va K_\varepsilon(D^\gamma u_0  )\big\|_{L^2(0,T; H_0^{m-1}(\Om))} \nonumber\\
&\leq C\sum_{\substack{|\al|=m-1\\ \eta+\zeta'+\zeta=\al}} \big\|  \sum_{|\gamma|=m}  D^\eta \chi^\gamma_\va \, D^{\zeta} S^2_\varepsilon(D^\gamma u_0  ) \, D^{\zeta'}\rho_\varepsilon\varrho_\va  \big\|_{L^2(\Om_T)} \nonumber \\
&\leq C \va^{-m+1} \|\na^m u_0\|_{L^2(\Om_T)} \leq C\va^{-m+1},
\end{align}
which implies (\ref{ptcd11}). In a similar way, we can also get (\ref{ptcd12}) and \eqref{ptcd13}. To prove (\ref{ptcd1_w}), it suffices to verify
\begin{align}\label{ptcd14}
\Big|\int_0^T \langle F, w_\va\rangle_{H^{-m+1}(\Om) \times H_0^{m-1}(\Om)}dt \Big|\leq C \va\|F\|_{L^2(0,T; H^{-m+1}(\Om))}, \quad \forall\, F\in L^2(0,T; H^{-m+1}(\Om)) .
\end{align}

Similar to \eqref{def_w}, we define
\begin{align}\label{ww}
\Phi_\va(x,t)&=v_\varepsilon(x,T-t)-v_0(x, T-t)-\varepsilon^m\sum_{|\gamma|=m}\chi^{*\gamma}_{T,\va} \widetilde{K}_\varepsilon(D^\gamma v_0(x, T-t))
\nonumber\\  &\quad +(-1)^{m+1} \sum_{\substack{|\beta|=|\gamma|=m\\ \zeta+\eta =\gamma\\0\leq|\zeta|\leq m-1}}\varepsilon^{m+|\eta|}(D^\zeta\mathfrak{B}^{*\gamma (d+1) \beta}_T)_\varepsilon D^\eta\widetilde{K}_\varepsilon(D^\beta v_0(x, T-t)) \nonumber\\
  &\quad +(-1)^m\varepsilon^{2m}\sum_{|\alpha|=|\beta|=m}\mathcal{B}_{T, \varepsilon}^{*\alpha\beta}D^\alpha \widetilde{K}_\varepsilon(D^\beta v_0(x, T-t)),
\end{align} where $\widetilde{K}_\varepsilon(u)=S^2_\varepsilon(u)\widetilde{\rho}_\varepsilon\wt{\varrho}_\varepsilon$, $\widetilde{\rho}_\varepsilon\in C_c^\infty(\Omega)$ and $\wt{\varrho}_\varepsilon\in C_c^\infty(0,T)$ such that
\begin{align*}
\begin{split}
&supp(\widetilde{\rho}_\varepsilon)\subset \Om\!\setminus\!\Om_{12\varepsilon} ,\quad  supp(\widetilde{\varrho}_\varepsilon) \subset (12\va^{2m}, T-12\va^{2m}), \\
& 0\leq\widetilde{\rho}_\varepsilon\leq 1,\quad \wt{\rho}_\varepsilon=1\text{ in } \Om\!\setminus\!\Om_{16\varepsilon}
 \quad\text{and} \quad |\nabla^k \widetilde{\rho}_\varepsilon|\leq C \varepsilon^{-k},  1\le k\le m,\\
& 0\leq\widetilde{\varrho}_\varepsilon\leq 1, \quad
 \widetilde{\varrho}_\varepsilon=1  \text{ in } (16\va^{2m}, T-16\va^{2m}) \quad\text{and}\quad
 |  \widetilde{\varrho}^{\,\prime}_\varepsilon|\le C \varepsilon^{-2m} .
\end{split}\end{align*}
Using (\ref{deq1})  and  (\ref{ww}), we deduce that
\begin{align}\label{ptcd15}
&\int_0^T \big\langle F, w_\va\big\rangle_{H^{-m+1}(\Om) \times H_0^{m-1}(\Om)}dt\nonumber\\
&= \int_0^T \big\langle \pa_tw_\va, v_\va(t) \big\rangle_{H^{-m+1}(\Om) \times H_0^{m-1}(\Om)}dt+ \sum_{|\al|=|\be|=m}\int_{\Om_T} A_\va^{\be\al} D^\al w_\va D^\be v_\va(x,t)\nonumber\\
&=\int_0^T \big\langle \pa_tw_\va, \Phi_\va(T-t) \big\rangle_{H^{-m+1}(\Om) \times H_0^{m-1}(\Om)}dt+ \sum_{|\al|=|\be|=m}\int_{\Om_T} A_\va^{\be\al} D^\al w_\va D^\be \Phi_\va(x,T-t)\nonumber\\
&\quad+ \int_0^T \big\langle \pa_tw_\va, v_0(t) \big\rangle_{H^{-m+1}(\Om) \times H_0^{m-1}(\Om)}dt+ \sum_{|\al|=|\be|=m}\int_{\Om_T} A_\va^{\be\al} D^\al w_\va D^\be v_0(x,t)\nonumber\\
&\quad+ \int_0^T \big\langle \pa_tw_\va, v_\va(t)-v_0(t)- \Phi_\va(T-t)\big\rangle_{H^{-m+1}(\Om) \times H_0^{m-1}(\Om)}dt\nonumber\\
& \quad+ \sum_{|\al|=|\be|=m}\int_{\Om_T} A_\va^{\be\al} D^\al w_\va D^\be  \Big\{v_\va(x,t)-v_0(x,t)- \Phi_\va(x,T-t)\Big\}\nonumber\\
&\doteq \mathcal{I}_1 +\mathcal{I}_2+\cdot\cdot\cdot+\mathcal{I}_6.
\end{align}  By Lemma \ref{le3.2}, we have
\begin{align}\label{ptcd160}
\mathcal{I}_1 +\mathcal{I}_2 &\leq   C\left\{\varepsilon \|\nabla^m \Phi_\va\|_{L^2(\Omega_T)}  +\varepsilon^{1/2}  \|\nabla^m \Phi_\va\|_{L^2(\Omega_{T,4\varepsilon})} \right \}  \times \Big\{ \|\pa_t  u_0\|_{L^2(0,T; H^{-m+1}(\Om))}\nonumber\\ &\quad\quad\quad +\|u_0\|_{L^2(0,T; H^{m+1}(\Om))} + \sup_{10\va^{2m}< t< T}\Big(\frac{1}{\va}\int^t_{t-10\va^{2m}} \| \nabla^m  u_0(t)\|^2_{L^2(\Om)} \Big)^{1/2} \Big\} \nonumber\\
  &\leq C  \varepsilon^{1/2} \|\nabla^m \Phi_\va\|_{L^2(\Omega_T)},
\end{align}
where we have used (\ref{pco314}) and (\ref{pco315}) for the second inequality.
Since $A^*(y, T-s)$ satisfies conditions (\ref{cod1}) and (\ref{cod2}) as $A(y,s),$ and $\Om$ is $C^{m,1}$. By (\ref{co31re2}), we have
\begin{align} \label{co31re2'}
 \|\nabla^m \Phi_\va\|_{L^2(\Omega_T)}\leq C \va^{1/2}\|F\|_{L^2(0,T; H^{-m+1}(\Om))},
\end{align}
which combined with (\ref{ptcd160}), implies that
\begin{align}\label{ptcd16}
 \mathcal{I}_1 +\mathcal{I}_2\leq C \va \|F\|_{L^2(0,T; H^{-m+1}(\Om))}.
\end{align}

Likewise, using Lemma \ref{le3.2}, (\ref{pco314}) and (\ref{pco315}),
we deduce that
\begin{align}\label{ptcd17}
\mathcal{I}_3+\mathcal{I}_4& \leq C\left\{\varepsilon \|\nabla^m v_0\|_{L^2(\Omega_T)}  +\varepsilon^{1/2}  \|\nabla^m v_0\|_{L^2(\Omega_{T,4\varepsilon})} \right \}
  \leq C \varepsilon \|F\|_{L^2(0,T; H^{-m+1}(\Om))},
\end{align}
where, for the last step, we have used estimates (\ref{ple3201}), (\ref{pco315}) and (\ref{pco316}) for $v_0$. The proof of these estimates are completely the same as those for $u_0$, since $A^*(y, T-s)$ satisfies conditions as $A(y,s).$

Finally, note that $v_\va(t)-v_0(t)- \Phi_\va(T-t) $ is supported on $\Om_T\!\setminus\!\Om_{T,12\va}$.
From Lemma \ref{le3.2}, (\ref{pco314}) and (\ref{pco315}), it follows that
\begin{align}\label{ptcd18}
\mathcal{I}_5+\mathcal{I}_6&\leq C  \varepsilon\|\nabla^m \big[v_\va-v_0- \Phi_\va(T-t)\big]\|_{L^2(\Omega_T)}
  \times \Big\{\|\pa_t  u_0\|_{L^2(0,T; H^{-m+1}(\Om))} \nonumber\\
   &\quad +\|u_0\|_{L^2(0,T; H^{m+1}(\Om))} + \sup_{10\va^{2m}< t< T}\Big(\frac{1}{\va}\int^t_{t-10\va^{2m}} \| \nabla^m  u_0(t)\|^2_{L^2(\Om)} \Big)^{1/2}\Big\} \nonumber\\
  &\leq C   \varepsilon  \|\nabla^m \big[v_\va-v_0- \Phi_\va(T-t)\big]\|_{L^2(\Omega_T)}.
\end{align}
Similar to (\ref{ptcd2}),  using Poincar\'{e}'s inequality and Lemmas \ref{le2.2}, \ref{le2.3},
we can prove that
\begin{align*}
& \va^{m}\Big\|\sum_{|\gamma|=m}\chi^{*\gamma}_{T,\va} \widetilde{K}_\varepsilon(D^\gamma v_0)\Big\|_{L^2(0,T; H_0^{m}(\Om))}
  \leq C \|v_0\|_{L^2(0,T; H^{m+1}(\Om))},\nonumber\\
&\Big\|\sum_{\substack{|\beta|=|\gamma|=m\\ \zeta+\eta =\gamma\\0\leq|\zeta|\leq m-1}}\varepsilon^{m+|\eta|}(D^\zeta\mathfrak{B}^{*\gamma (d+1) \beta}_T)_\varepsilon D^\eta\widetilde{K}_\varepsilon(D^\beta v_0)\Big\|_{L^2(0,T; H_0^{m}(\Om))} \leq C \|v_0\|_{L^2(0,T; H^{m+1}(\Om))},\nonumber\\
  &\varepsilon^{2m}\Big\|\sum_{|\alpha|=|\beta|=m}\mathcal{B}_{T, \varepsilon}^{*\alpha\beta}D^\alpha \widetilde{K}_\varepsilon(D^\beta v_0)\Big\|_{L^2(0,T; H_0^{m}(\Om))} \leq C \|v_0\|_{L^2(0,T; H^{m+1}(\Om))},
\end{align*}
which, together with (\ref{ww}), (\ref{ptcd18}) and (\ref{pco316}) (for $v_0$), gives
\begin{align} \label{ptcd19}
 \mathcal{I}_5+\mathcal{I}_6&\leq C \va \|F\|_{L^2(0,T; H^{-m+1}(\Om))}.
\end{align}
Note that (\ref{ptcd14}) follows directly from (\ref{ptcd15}), (\ref{ptcd16}), (\ref{ptcd17}) and (\ref{ptcd19}). The proof is thus  complete.
\end{proof}

\section{Convergence rates for the initial-Neumann problem }
In this section, we provide a concise discussion on the convergence rate in the homogenization of initial-Neumann problem with homogeneous boundary data.
\begin{lemma}\label{le5.1}
  Let $\Omega$ be a bounded Lipschitz domain in $\mathbb{R}^d$,  $A$ satisfy conditions (\ref{cod1}) and (\ref{cod2}). Let $u_\varepsilon\in L^2(0,T; H^m(\Om)), u_0\in L^2(0,T; H^{m+1}(\Om))$ be weak solutions to the initial-Neumann problems (\ref{eq1})  and (\ref{hoeq2}), respectively. Let $w_\va$ be defined as in \eqref{def_w}. Then for any  $\phi\in L^2(0, T; H^m(\Omega))$,
\begin{equation}\label{le51re1}
\int_0^T \langle \pa_t w_\va, \phi\rangle_{\wt{H}^{-m}(\Om)\times H^m(\Om)} dt+\sum_{|\alpha|=|\beta|=m}  \int_{\Om_T} D^\alpha \phi
A_\va^{\alpha\beta}   D^\beta w_\varepsilon =  ( \textbf{\text{the r.h.s  of}  (\ref{le31re1})}).
\end{equation}
\end{lemma}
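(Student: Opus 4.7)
The plan is to split $w_\varepsilon = (u_\varepsilon - u_0) + \mathfrak{C}_\varepsilon$, where $\mathfrak{C}_\varepsilon$ denotes the three corrector sums on the right of \eqref{def_w}. Because every summand of $\mathfrak{C}_\varepsilon$ carries the cutoff $\rho_\varepsilon \varrho_\varepsilon$, the function $\mathfrak{C}_\varepsilon(\cdot, t)$ is spatially supported in $\Omega\setminus\Omega_{6\varepsilon}$, and $\mathfrak{C}_\varepsilon$ vanishes outside $(6\varepsilon^{2m}, T - 6\varepsilon^{2m})$ in time; in particular $\mathfrak{C}_\varepsilon \in L^2(0, T; H_0^m(\Omega))$ with spacetime support strictly inside $\Omega\times(0, T)$.

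First, for the $(u_\varepsilon - u_0)$ part I would test the Neumann weak formulation \eqref{neu1} of $u_\varepsilon$ (with the $N_{m-1-j}u_\varepsilon$ terms equal to zero by hypothesis) and the analogous identity for $u_0$ against an arbitrary $\phi \in L^2(0, T; H^m(\Omega))$, subtract, and rearrange to obtain
\[
\int_0^T \langle \partial_t (u_\varepsilon - u_0), \phi\rangle_{\widetilde{H}^{-m}(\Omega)\times H^m(\Omega)}\, dt + \sum_{|\alpha|=|\beta|=m} \int_{\Omega_T} A_\varepsilon^{\alpha\beta} D^\beta(u_\varepsilon - u_0)\, D^\alpha \phi = -\sum_{|\alpha|=|\beta|=m} \int_{\Omega_T} (A_\varepsilon^{\alpha\beta} - \bar{A}^{\alpha\beta})\, D^\beta u_0 \, D^\alpha \phi.
\]
This recovers the raw $D^\beta u_0$-part of the first block on the right-hand side of \eqref{le31re1}.

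Next I would handle the corrector piece by observing that $\int_0^T \langle \partial_t \mathfrak{C}_\varepsilon, \phi\rangle\, dt + \sum \int_{\Omega_T} A_\varepsilon^{\alpha\beta} D^\beta \mathfrak{C}_\varepsilon D^\alpha \phi$ depends only on the restriction of $\phi$ to a neighborhood of $\operatorname{supp}(\rho_\varepsilon\varrho_\varepsilon)$, and is therefore unaffected by whether $\phi$ has vanishing traces on $\partial\Omega$. Consequently the algebraic manipulations in the proof of Lemma~\ref{le3.1} (which combine the corrector equation \eqref{corrector}, the flux-corrector identity \eqref{le21re1}, and Lemma~\ref{lem22}) transfer verbatim to this broader class of test functions, producing
\[
\int_0^T \langle \partial_t \mathfrak{C}_\varepsilon, \phi\rangle\, dt + \sum_{|\alpha|=|\beta|=m} \int_{\Omega_T} A_\varepsilon^{\alpha\beta} D^\beta \mathfrak{C}_\varepsilon D^\alpha \phi = \sum_{|\alpha|=|\beta|=m} \int_{\Omega_T} (A_\varepsilon^{\alpha\beta} - \bar{A}^{\alpha\beta})\, K_\varepsilon(D^\beta u_0)\, D^\alpha \phi + \Sigma,
\]
where $\Sigma$ stands for the second through fifth blocks on the right of \eqref{le31re1}.

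Adding the two contributions recombines $D^\beta u_0$ and $K_\varepsilon(D^\beta u_0)$ into $D^\beta u_0 - K_\varepsilon(D^\beta u_0)$, reconstituting the first block of \eqref{le31re1} and giving precisely \eqref{le51re1}. The only hurdle I anticipate is the index-and-sign bookkeeping required to lift the Dirichlet-case manipulation onto the corrector piece $\mathfrak{C}_\varepsilon$ alone; the boundary contributions that one might naively fear in the Neumann setting are automatically absent, since the homogeneous Neumann data for $u_\varepsilon$ and $u_0$ are absorbed through their weak formulations in the first step, while the cutoff $\rho_\varepsilon$ confines $\mathfrak{C}_\varepsilon$ strictly away from $\partial\Omega$.
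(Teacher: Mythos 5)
Your proposal is correct and amounts to the paper's own argument: the paper proves Lemma \ref{le5.1} simply by rerunning the computation of Lemmas \ref{le3.1}--\ref{le3.2} for test functions $\phi\in L^2(0,T;H^m(\Om))$, and your decomposition of $w_\va$ into $(u_\va-u_0)$ plus the cutoff-supported corrector sum just makes explicit why the extension is harmless. The Neumann weak formulations absorb the boundary behavior of $u_\va$ and $u_0$, while the corrector part carries $\rho_\va\varrho_\va$ and is supported strictly inside $\Om_T$, so every integration by parts is boundary-term free and the first block of (\ref{le31re1}) is reassembled exactly as you describe.
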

\begin{proof}
The proof is parallel to that of  Lemma \ref{le3.1} with slight adaptations to Neumann problem.
\end{proof}

\begin{lemma}\label{le5.2}
 Under the assumption of Lemma \ref{le5.1}, we have  for any $\phi \in L^2(0,T; H^m(\Omega)),$
 \begin{align}\label{le52re2}
&\int_0^T \big\langle \pa_t w_\va, \phi\big\rangle_{\wt{H}^{-m}(\Om)\times H^m(\Om)} dt +  \sum_{|\alpha|=|\beta|=m} \int_{\Om_T} D^\alpha \phi   A_\va^{\alpha\beta}
 D^\beta w_{\varepsilon }dxdt \nonumber\\
 &\leq C \Big\{\|u_0\|_{L^2(0,T; H^{m+1}(\Om))}+ \|\pa_t  u_0\|_{L^2(0,T; \wt{H}^{-m+1}(\Om))}+ \sup_{10\va^{2m}< t< T}\Big(\frac{1}{\va}\int^t_{t-10\va^{2m}} \| \nabla^m  u_0(t)\|^2_{L^2(\Om)} \Big)^{1/2} \Big\} \nonumber\\ &\quad  \times \left\{\varepsilon \|\nabla^m \phi\|_{L^2(\Omega_T)}  +\varepsilon^{1/2}  \|\nabla^m \phi\|_{L^2(\Omega_{T,8\varepsilon})} \right \},
\end{align}
where $C$ depends only on $d, n, m, \mu, T$ and $\Omega.$
\end{lemma}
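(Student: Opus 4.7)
The plan is to mirror the proof of Lemma \ref{le3.2}, which bounds exactly the same right-hand side coming from the (identical) identity of Lemma \ref{le3.1}. First, invoke Lemma \ref{le5.1} to rewrite the left-hand side of \eqref{le52re2} as $I_1 + I_2 + I_3 + I_4 + I_5$, the five terms appearing on the right-hand side of \eqref{le31re1}. The desired bound then reduces to estimating each $I_i$ by the product on the right-hand side of \eqref{le52re2}.

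Each $I_i$ has the structure
\[
\int_{\Om_T} (\text{oscillating coefficient})_\va \cdot (\text{derivative of } K_\va(D^\beta u_0)) \cdot D^\al \phi,
\]
times a suitable power of $\va$. Because $K_\va(u) = S_\va^2(u)\rho_\va\varrho_\va$ with $\rho_\va, \varrho_\va$ supported in the interior of $\Om_T$ (away from $\partial\Om\times(0,T)$ and from $\{t=0\}\cup\{t=T\}$), the estimates of Lemmas \ref{le2.2} and \ref{le2.3} apply verbatim and produce exactly the bounds $J_1,\dots,J_5$ derived in the proof of Lemma \ref{le3.2}. In those bounds, $\phi$ appears only through $\|\nabla^m \phi\|_{L^2(\Om_T)}$ and $\|\nabla^m\phi\|_{L^2(\Om_{T,8\va})}$; no boundary trace or lower-order norm of $\phi$ is ever needed, so the estimates are insensitive to whether $\phi$ satisfies zero Dirichlet data and extend immediately to $\phi\in L^2(0,T; H^m(\Om))$.

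The only point that genuinely requires care is the appearance of $\pa_t u_0$ in the bounds for $J_2$ and $J_5$, which ultimately rests on the duality computation \eqref{ple237}. Test functions $g \in C_c^\infty(\Om_T \setminus \Om_{T,2\va})$ are compactly supported away from $\partial\Om$, so $D^\eta \wt{S}_\va(g)$ lies in $W_0^{m-1,q'}(\Om\setminus\Om_\va)$ and extends by zero to an element of $H^{m-1}(\Om)$. Hence the pairing with $\pa_t u_0$ is naturally controlled by the dual norm $\wt{H}^{-m+1}(\Om) = (H^{m-1}(\Om))^*$, which is exactly the space in which $\pa_t u_0$ lies for the initial-Neumann problem (obtained from the equation $\pa_t u_0 = f - \mc{L}_0 u_0$ together with $f \in L^2(0,T;\wt{H}^{-m+1}(\Om))$ and $u_0 \in L^2(0,T;H^{m+1}(\Om))$). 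With this replacement of $H^{-m+1}$ by $\wt{H}^{-m+1}$, the full chain of estimates leading to \eqref{le32re1} goes through, and summing the resulting bounds for $J_1,\dots,J_5$ yields \eqref{le52re2}.

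I do not anticipate any serious obstacle beyond bookkeeping: the argument is structurally identical to that of Lemma \ref{le3.2}, and the only substantive adjustment is the reinterpretation of the pairing used for $\pa_t u_0$ in the Neumann setting. The key feature that makes the adaptation painless is that all five terms $I_i$ are \emph{interior} quantities (due to the cutoffs $\rho_\va,\varrho_\va$), so the Neumann framework enters only at the level of function-space duality, never through boundary integration by parts.
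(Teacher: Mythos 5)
Your proposal is correct and follows exactly the route the paper intends: the paper's own proof of Lemma \ref{le5.2} simply states that it is "almost the same as that of Lemma \ref{le3.2}" and omits the details, which is precisely the adaptation you carried out. Your observation that the cutoffs make all five terms interior quantities, so that only the replacement of $H^{-m+1}(\Om)$ by $\wt{H}^{-m+1}(\Om)$ in the pairing with $\pa_t u_0$ is needed, correctly fills in the point the paper leaves implicit.
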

\begin{proof}
The proof is almost the same as that of Lemma \ref{le3.2}, let us omit the details.
\end{proof}

With Lemmas \ref{le5.1} and \ref{le5.2} as preparations, we obtain the following theorem parallel to Theorem \ref{t3.1}. Since the proof is almost the same to the one of Theorem \ref{t3.1}, we omit the details for brevity.
\begin{theorem}\label{th5.1}
Let $\Omega$ be a bounded Lipschitz domain in $\mathbb{R}^d$, and $A$ satisfy conditions (\ref{cod1}) and (\ref{cod2}). Let $u_\varepsilon, u_0\in L^2(0,T; H^m(\Om))$ be weak solutions to initial problems (\ref{eq1}) and (\ref{hoeq2}) with homogeneous Neumann boundary data respectively. Moreover, assume that $u_0\in L^2(0,T; H^{m+1}(\Om))$ and $w_\va$ is defined as \eqref{def_w}.
Then \begin{align}\label{co51re1}
\|\na^m w_\va\|_{L^2(\Om_T)}\leq C \va^{1/2} \Big\{\|u_0\|_{L^2(0,T; H^{m+1}(\Om))}+ \|f\|_{L^2(0,T; \wt{H}^{-m+1}(\Om))} +\|h\|_{L^{2}(\Om)}  \Big\},
\end{align}where $C$ depends only on $d, n, m, \mu, T$ and $\Omega.$
If  in addition  $\Om$ is $C^{m,1}$  and $h=0$, then
\begin{align}\label{co51re2}
\|\na^m w_\va\|_{L^2(\Om_T)}\leq C \va^{1/2}  \|f\|_{L^2(0,T; \wt{H}^{-m+1}(\Om))},
\end{align}where $C$ depends only on $d, n, m, \mu, T$ and $\Omega.$
\end{theorem}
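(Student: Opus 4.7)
The plan is to mirror the argument used for Theorem \ref{t3.1}, substituting Lemma \ref{le5.2} for Lemma \ref{le3.2}. The key observation enabling this substitution is that $w_\varepsilon$, as defined by \eqref{def_w}, lies in $L^2(0,T; H^m(\Omega))$, so it is an admissible test function in the identity \eqref{le52re2}. Setting $\phi = w_\varepsilon$ yields
\begin{equation*}
\int_0^T \langle \partial_t w_\varepsilon, w_\varepsilon\rangle_{\widetilde{H}^{-m}(\Om)\times H^m(\Om)}\,dt + \sum_{|\alpha|=|\beta|=m}\int_{\Om_T} A_\va^{\alpha\beta} D^\beta w_\varepsilon\, D^\alpha w_\varepsilon \;\leq\; \text{RHS of \eqref{le52re2}}.
\end{equation*}
Because $\varrho_\varepsilon(0)=0$, all corrector/flux-corrector terms in \eqref{def_w} vanish at $t=0$, so $w_\varepsilon(\cdot,0) = u_\varepsilon(\cdot,0)-u_0(\cdot,0) = h - h = 0$. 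Hence $\int_0^T\langle \partial_t w_\varepsilon, w_\varepsilon\rangle\,dt = \tfrac12\|w_\varepsilon(T)\|_{L^2(\Om)}^2 \geq 0$, and the coercivity condition \eqref{cod1} gives
\begin{equation*}
\mu\|\nabla^m w_\varepsilon\|_{L^2(\Om_T)}^2 \;\leq\; \text{RHS of \eqref{le52re2}}\big|_{\phi=w_\varepsilon}.
\end{equation*}

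Next I would absorb the term $\|\nabla^m w_\varepsilon\|_{L^2(\Om_T)}$ appearing on the right-hand side (via Young's inequality on the factor $\varepsilon\,\|\nabla^m\phi\|$) to leave the bound
\begin{equation*}
\|\nabla^m w_\varepsilon\|_{L^2(\Om_T)} \;\leq\; C\varepsilon^{1/2}\Big\{\|u_0\|_{L^2(0,T; H^{m+1}(\Om))} + \|\partial_t u_0\|_{L^2(0,T;\widetilde{H}^{-m+1}(\Om))} + \mathcal{S}(u_0)\Big\},
\end{equation*}
where $\mathcal{S}(u_0)$ denotes the supremum of time-averages appearing in \eqref{le52re2}. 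It remains to control the last two quantities by $\|u_0\|_{L^2(0,T;H^{m+1}(\Om))} + \|f\|_{L^2(0,T;\widetilde{H}^{-m+1}(\Om))} + \|h\|_{L^2(\Om)}$. For $\partial_t u_0$ this comes from the equation $\partial_t u_0 = f - \mathcal{L}_0 u_0$: testing against $\psi\in H^{m-1}(\Om)$ and integrating by parts shifts $m-1$ derivatives onto $\psi$, so $\mathcal{L}_0 u_0 \in \widetilde{H}^{-m+1}(\Om)$ with norm controlled by $\|u_0\|_{H^{m+1}(\Om)}$. For $\mathcal{S}(u_0)$, I would repeat the computation \eqref{pco312}--\eqref{pco315}: take the energy identity $\int_{t-10\va^{2m}}^t \int_{\Om} \bar{A}^{\alpha\beta} D^\alpha u_0\, D^\beta u_0 = \int_{t-10\va^{2m}}^t \langle -\partial_t u_0 + f, u_0\rangle\,ds$, apply the Neumann coercivity of $\bar{A}$, then interpolate $\|u_0\|_{H^{m-1}}^2 \leq C\varepsilon^2\|u_0\|_{H^m}^2 + C\varepsilon^{-2m+2}\|u_0\|_{L^2}^2$ and close using the standard $L^\infty(0,T;L^2) \cap L^2(0,T;H^m)$ energy estimate for the Neumann problem (which bounds $u_0$ by $\|f\|_{L^2(0,T;\widetilde{H}^{-m+1})} + \|h\|_{L^2}$). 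This yields \eqref{co51re1}.

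For the refined estimate \eqref{co51re2}, when $h=0$ and $\Om$ is $C^{m,1}$ the first term on the right of \eqref{co51re1} can be absorbed: by time-discretization and the classical $H^{m+1}$ regularity theorem for the Neumann problem for a $2m$-order elliptic system with constant coefficients on $C^{m,1}$ domains, one obtains
\begin{equation*}
\|u_0\|_{L^2(0,T; H^{m+1}(\Om))} \;\leq\; C\|f\|_{L^2(0,T; \widetilde{H}^{-m+1}(\Om))},
\end{equation*}
exactly as in \eqref{pco316}. Plugging this into \eqref{co51re1} gives \eqref{co51re2}. The principal technical obstacle, compared with the Dirichlet case, is the correct identification of the test-space/regularity theory for the Neumann setting: confirming that $\partial_t u_0 \in L^2(0,T;\widetilde{H}^{-m+1}(\Om))$ and that the Neumann Green's operator maps $L^2(0,T;\widetilde{H}^{-m+1})$ into $L^2(0,T;H^{m+1})$ under $C^{m,1}$ regularity of $\partial\Om$; once these ingredients are in place, the remainder is a verbatim transcription of the Dirichlet argument.
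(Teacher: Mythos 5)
Your proposal is correct and follows essentially the route the paper intends: the paper omits the proof of Theorem \ref{th5.1} precisely because it is the argument of Theorem \ref{t3.1} repeated with Lemma \ref{le5.2} in place of Lemma \ref{le3.2}, i.e.\ taking $\phi=w_\va$, using $w_\va(\cdot,0)=0$ and coercivity, absorbing the $\|\nabla^m w_\va\|$ terms, and controlling $\pa_t u_0$ and the supremum term by the Neumann analogues of (\ref{pco312})--(\ref{pco315}), with the Neumann $H^{m+1}$ regularity estimate replacing (\ref{pco316}) for the refined bound. Your reconstruction matches this in all essentials.
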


\begin{proof}[\bf Proof of Theorem \ref{tcn}] The proof is completely parallel to that of Theorem \ref{tcd}. Indeed, let $F\in L^2(0,T; H^{-m+1}(\Om)).$  We consider the following initial-Neumann problems
\begin{equation} \label{deq2}
 \begin{cases}
 -\pa_tv_\va+\mathcal{L}^*_\varepsilon v_\varepsilon =F  &\text{ in } \Omega_T,   \\
 N_{m-1-j} (v_\va)=0 & \text{ on } \partial\Omega \times(0,T), \quad  j=0,1,..., m-1,  \\
 v_\va=0 & \text{ on } \Omega \times \{t=T\},
\end{cases}
\end{equation}
and
\begin{equation} \label{dhoeq2}
 \begin{cases}
 -\pa_tv_0+\mathcal{L}^*_0 v_0 =F  &\text{ in } \Omega_T,   \\
 N_{m-1-j} (v_0)=0 & \text{ on } \partial\Omega \times(0,T), \quad  j=0,1,..., m-1,  \\
 v_0=0 & \text{ on } \Omega \times \{t=T\},
\end{cases}
\end{equation}
It is obvious that $v_\va(x,T-t), v_0(x,T-t)$ are solutions respectively to (\ref{eq1}) and (\ref{hoeq2}) with homogeneous Neumann boundary data, and also with $f(x,t)=F(x,T-t), h=0 $ and $A(x/\va, t/\va^{2m})$ replaced by $A^*(x/\va, (T-t)/\va^{2m})$.  Moreover,  $v_0$ still satisfy estimates (\ref{ple3201}), (\ref{pco315}) and (\ref{pco316}).
Define $\Phi_\va$ as (\ref{ww}). Observe that
\begin{align}
&\int_0^T \big\langle F, w_\va\big\rangle_{\wt{H}^{-m+1}(\Om) \times H^{m-1}(\Om)}dt\nonumber\\
&= \int_0^T \big\langle \pa_tw_\va, v_\va(t) \big\rangle_{\wt{H}^{-m+1}(\Om) \times H^{m-1}(\Om)}dt+ \sum_{|\al|=|\be|=m}\int_{\Om_T} A_\va^{\al\be} D^\be w_\va D^\al v_\va.\nonumber
\end{align}
With Lemmas \ref{le5.1}, \ref{le5.2} and Theorem \ref{th5.1} at our disposal, we can perform the same analysis as we did for Theorem \ref{tcd} to derive (\ref{tcnre1}).
\end{proof}

\noindent\textbf{Acknowledgments.}   The authors are much obliged to Professor Zhongwei Shen for the guidance and enlightening discussions. They would also like to express deep gratitude to Professor Russell Brown  and the Department of Mathematics of University of Kentucky for the support and the warm hospitality during the authors' visit.
\bibliographystyle{amsplain}
\bibliography{7}

\vspace{0.2cm}
\noindent Weisheng Niu \\
School of Mathematical Science, Anhui University,
Hefei, 230601, P. R. China\\
E-mail:weisheng.niu@gmail.com\\
%

\noindent Yao Xu \\
Department of Mathematics, Nanjing University,
Nanjing, 210093, P. R. China\\
E-mail:dg1421012@smail.nju.edu.cn\\

 \end{document}